\newtheorem{theorem}{Theorem}[section]
\newtheorem{corollary}[theorem]{Corollary}
\newtheorem{proposition}[theorem]{Proposition}
\newtheorem{lemma}[theorem]{Lemma}
\newtheorem{remark}[theorem]{Remark}
\newtheorem*{theorem*}{Theorem}
\newtheorem{definition}[theorem]{Definition}
\newtheorem{example}[theorem]{Example}
\newtheorem{construction}[theorem]{Construction}
\DeclareMathOperator{\Ad}{Ad}
\newcommand{\bB}{{\mathbb{B}}}
\newcommand{\bC}{{\mathbb{C}}}
  \newcommand{\A}{{\mathcal{A}}}
  \newcommand{\B}{{\mathcal{B}}}
  \newcommand{\D}{{\mathcal{D}}}
  \newcommand{\F}{{\mathcal{F}}}
\renewcommand{\H}{{\mathcal{H}}}
  \newcommand{\I}{{\mathcal{I}}}
  \newcommand{\J}{{\mathcal{J}}}
  \newcommand{\K}{{\mathcal{K}}}  
\renewcommand{\L}{{\mathcal{L}}}
  \newcommand{\M}{{\mathcal{M}}}
\renewcommand{\O}{{\mathcal{O}}}
  \newcommand{\R}{{\mathcal{R}}}
\renewcommand{\S}{{\mathcal{S}}}
  \newcommand{\T}{{\mathcal{T}}}
  \newcommand{\V}{{\mathcal{V}}}
  \newcommand{\W}{{\mathcal{W}}}
  \newcommand{\X}{{\mathcal{X}}}
  \newcommand{\Y}{{\mathcal{Y}}}
  \newcommand{\Z}{{\mathcal{Z}}}
\title{Products and factorization in operator systems}
\author{Adam Dor-On}
\author{Travis Russell}
\address{Department of Mathematics, University of Haifa, Mount Carmel, Haifa 3103301, Israel}
\email{adoron.math@gmail.com}
\address{Department of Mathematics, Texas Christian University, Fort Worth, TX 76129}
\email{travis.b.russell@tcu.edu }
\subjclass[2020]{46L07, 47L25}
\keywords{Operator system, unital operator space, Haagerup tensor product}
\thanks{A. Dor-On was partially
supported by an NSF-BSF grant no. 2350543 / 2023695 (respectively). Both authors were partially supported by a BSF Start-up grant no. 2024161}
\begin{document}

\maketitle

\begin{abstract}
    We study unital operator spaces endowed with a partially defined product. We give a matrix-norm characterization of such products that allows for a representation theorem where the partial product is realized as composition of operators on Hilbert space. We study product-respecting C*-covers, including a universal \emph{product} C*-cover, and \emph{product} quotients. We show that for the Haagerup tensor product of unital operator spaces remains injective, while projectivity holds relative to \emph{product} quotients. Moreover, we identify the commuting tensor product as a complete \emph{product} quotient of the Haagerup tensor product. Our framework yields new factorization norm formulas for a variety of product structures, as well as an intrinsic trace-extension criterion that resolves a question posed by Sinclair. Our work unifies and extends tensor products for operator systems, with applications in quantum information theory.
\end{abstract}

\section{Introduction}

The theory of operator systems was first studied for its own sake by Arveson \cite{Arveson69}, who viewed them as noncommutative analogues of function systems. It has since become a central framework for structural and geometric aspects of operator algebras \cite{paulsen2002completely, KAVRUK2011}, with deep connections to complexity and quantum information theory \cite{ji2020mip, paulsen2016estimating}. A recurring theme in this theory is the study of tensor products of operator systems \cite{KAVRUK2011}, which encode subtle interactions between positivity and composition of operators. Classical examples such as the minimal, commuting, and Haagerup tensor products of operator spaces are tied to major open problems, including Connes’ embedding problem \cite{junge2011connes, ozawa2013connes}, the Smith–Ward problem \cite{kavruk2014nuclearity}, and Kadison’s similarity problem via Pisier’s similarity degree \cite{Pisier01Similarity}. Yet despite the far-reaching applications these constructions provide, fundamental challenges still remain in obtaining explicit descriptions, particularly in the finite-dimensional setting.

While there is a well-established tensor product theory for operator systems (including the minimal and commuting tensor products), a broader perspective emerges when one attempts to define a Haagerup tensor product at the operator \emph{system} level. This leads us to consider \emph{abstract products} of unital operator systems that go beyond the standard tensor product framework. The resulting viewpoint captures a richer class of examples and supports a systematic treatment of multiplicative structures in operator systems and unital operator spaces.

A central motivating example for this work is the Haagerup tensor product. Given two unital operator spaces $\S$ and $\T$, their Haagerup tensor product $\S \otimes_h \T$ is the “freest” unital operator space product they generate. This imposes the fewest restrictions on how $\S$ and $\T$ multiply while retaining their operator space structure. However, the partial multiplication on $\S \otimes_h \T$ does not behave well with the universal C*-algebra functor. Indeed, in Proposition~\ref{p:not-prod-cov} we show that the canonical quotient
\[
C^*_u(\S \otimes_h \T)\;\longrightarrow\; C^*_u(\S)\,*_1 \,C^*_u(\T)
\]
need not be injective. This already indicates that the additional partial multiplication on $S\otimes_h T$ must be taken into account in order to obtain natural functorial properties for a unital operator space or an operator system analogue of the Haagerup tensor product.

For operator \emph{spaces}, the Haagerup tensor product is both injective and projective. In the \emph{unital} operator space setting, it is relatively straightforward to show that injectivity persists in the expected sense. That is, if $\phi: \S\to \S'$ and $\psi: \T\to \T'$ are unital complete isometries, then
\[
\phi\otimes\psi:\; \S \otimes_h \T \;\longrightarrow\; \S' \otimes_h \T'
\]
is a unital complete isometry. By contrast, in Theorem \ref{t:product-proj-haagerup} we prove projectivity \emph{relative to maps that respect the partial product} on the Haagerup tensor product. That is, if $\J \subseteq \S$ and $\K \subseteq \T$ are kernels of unital completely positive maps, the induced product map
\[
[(\S \otimes_h \T)/(\J \otimes \T+\S \otimes \K)]_m\;\longrightarrow\;(\S / \J)\otimes_h(\T / \K)
\]
is a unital complete isometry when the quotient taken in the left-hand side is a \emph{product} operator system quotient. This “\emph{product projectivity}” is the appropriate projective property once a partial multiplication is built into the structure of the tensor product.

To address these issues in a unified way, in Section \ref{s:product-structure} we introduce a framework for capturing partial multiplication on unital operator spaces. In this setup, an abstract product on a unital operator space is specified by a completely contractive, partially defined multiplication on a set of valid pairs for the underlying space. Specifically, for an operator system $\V$, we let $\D \subseteq \V \times \V$ denote the set of pairs $(x,y)$ for which the abstract product, denoted $m(x,y)$, is an element of $\V$. Corollary \ref{cor: abstract product characterization} is our main realization result, which shows that every abstract product of unital operator spaces admits a unital completely isometric realization in which the partial multiplication is realized by operator composition; i.e., we characterize when there exists a Hilbert space $\H$ and a complete isometry $\pi: \V \to \bB(\H)$ such that $\pi(m(x,y)) = \pi(x) \pi(y)$ for all pairs $(x,y) \in \D$.

Our abstract characterization of products of unital operator spaces naturally leads to us to define C*-covers and quotients that respect the intrinsic partial multiplication. While in Theorem \ref{thm: pp rep in C*-env} we show that the C*-envelope captures a minimal concrete representation of a product, the universal C*-algebra of an operator system may fail to be a product-respecting cover. We therefore introduce the universal product C*-cover $C^*_m(\V)$, which encodes the partial product structure of a given product operator space $\V$ as a product taken in the C*-cover. Quotients enter the picture via \emph{product quotients} in the sense that if a completely positive product map has kernel $\K$, the corresponding quotient $\V / \K$ inherits a well-defined partial multiplication on the space making it into a product. In particular, this viewpoint allows us to show in Theorem \ref{t:commutig-is-haagerup-prod-quotient} that $\S \otimes_c \T$ is a complete \emph{product} quotient of $\S \otimes_h \T$, providing a structural bridge between these two fundamental tensor products.

In these respects, our approach is a natural continuation of the program initiated by Kavruk, Paulsen, Todorov and Tomforde \cite{KAVRUK2011}, whose systematic treatment of tensor products of operator systems had far-reaching impact in quantum information and operator algebra theory \cite{ji2020mip}. Whereas \cite{KAVRUK2011} organizes tensor products via the use of matrix cones and mapping properties for ucp maps, our product-centric framework encodes multiplicative data intrinsically through matrix norms and partially defined completely contractive products. In turn, this yields intrinsic factorization norm descriptions. For instance in Corollary \ref{cor: commuting tensor norm} we uncover new factorization norms for $\S\otimes_c \T$, in Corollary \ref{cor: Quotient factorization norm} we find that product quotients recover ordinary operator system quotients when the partial multiplication is trivial, and in Theorems \ref{t:factorization-norm-group} and Corollary \ref{c:correlation-system-factorization} we discover new factorization norm formulas for operator systems arising from groups and projection-valued measures. Together these tools provide practical ways to compute and compare norms across commuting tensor products, operator system quotients, and in natural settings related to correlation sets in quantum information theory. For example, by considering operator systems spanned by projections, we find matrix-norm analogues for an operator system construction of Araiza and the second author \cite{AraizaRussellAbs23} which produces a universal operator system whose state space is in one-to-one correspondence with the set of all quantum commuting correlations.

Finally, we resolve a question posed by Thomas Sinclair in the November 2019 conference ``Quantitative Linear algebra meets Quantum Information Theory'' about abstractly characterizing traces on operator systems. We provide in Corollary \ref{c:tracial-extension} intrinsic conditions for the existence of a \emph{tracial} extension of a state on a product space $\V$ to $C^*_m(\V)$. More precisely, let $\V$ be an operator system with a completely contractive partial product $m:\D \to \V$. Working in the free algebra $\mathcal{F}_m(\V)$ generated by $\V$ and the permissible $m$-factorizations, we define a natural factorization seminorm $\| \cdot \|_{\tau}$. We show that a state $\varphi:\V \to\bC$ extends to a tracial state on the universal product C*-cover $C_m^*(\V)$ if and only if $\varphi$ is contractive for the restriction of the seminorm $\| \cdot \|_{\tau}$ to $\V$. Thus, without having to restrict a trace from a tracial C*-cover for $\V$, traces on $\V$ can be considered \emph{abstractly} as those states on $\V$ that admit tracial extensions to $C^*_m(\V)$, which in turn can be determined \emph{intrinsically} from a factorization norm related to the partial multiplication $m$ prescribed on $\V$. As an application, we show that tracial states on the universal operator system for quantum commuting correlations are in one-to-one correspondence with corners of synchronous quantum commuting correlations. Thus we have an operator space characterization of the set of synchronous quantum commuting correlations, whereas previously known characterizations from the literature require tracial states on C*-algebras.

The remainder of the paper is organized as follows. In Section \ref{s:prelim} we provide some preliminary results from the literature for the reader's convenience. In Section \ref{s:haagerup-tensor} we study the Haagerup tensor product in the context of unital operator spaces and operator systems, and explain its behaviour with respect to the universal C*-algebra and the C*-envelope. In Section \ref{s:product-structure} we define products of unital operator spaces and provide a new abstract characterization for them. In Section \ref{s:universal-covers-quotients-prod} we introduce product C*-covers and product quotients in our newly defined category of unital operator spaces with multiplicative structure. Finally, in Section \ref{s:factorization-application} we introduce factorizations norms and provide applications in a variety of settings. This includes operator system quotients, commuting tensor products, group operator systems, quantum correlation sets and their synchronous corners, and finally a characterization of states on operator systems that extend to traces on the universal product C*-cover. We provide many specific examples and constructions throughout the paper to motivate our definitions and results.

\subsection*{Acknowledgments} The authors are grateful for the hospitality of the American Institute of Mathematics, where work on this paper took place as part of the SQuaRE meetings on ``Approximation Properties for Operator Systems and Matrix Convex Sets''. The authors also thank Thomas Sinclair for discussions on the content of this manuscript.

\section{Preliminaries} \label{s:prelim}

For the convenience of the reader, we review some fundamental concepts and historical remarks concerning operator spaces and operator systems that appear throughout the paper. For a more thorough introduction to these topics, we recommend the textbooks \cite{BlecherOperatorAlgebras} and \cite{paulsen2002completely}.

An \textit{operator space} is defined concretely as a subspace of $\bB(\H)$ for some Hilbert space $\H$, whereas an \textit{operator system} is defined concretely as a unital self-adjoint subspace of $\bB(\H)$. Operator spaces were abstractly characterized as vector spaces equipped with $\mathrm{L}^{\infty}$-matrix norms by Ruan in \cite{RUAN1988}. Specifically, an $\mathrm{L}^{\infty}$-\textit{matrix seminorm} on a vector space $\V$ is a sequence of seminorms $\|\cdot\|_{n,m}: M_{n,m}(\V) \to [0,\infty)$ which satisfy 
\begin{itemize}
    \item $\|\alpha x \beta\|_{k,l} \leq \|\alpha\| \|x\|_{n,m} \|\beta\|$ for all $x \in M_{n,m}(\V)$, $\alpha \in M_{k,n}$, and $\beta \in M_{m,l}$, and
    \item $\|x \oplus y\|_{n+m,k+l} = \max(\|x\|_{n,k},\|y\|_{m,l})$ for every $x \in M_{n,k}(\V)$ and $y \in M_{m,l}(\V)$.
\end{itemize}
If each $\|\cdot\|_{n,m}$ is a norm, then $\{\|\cdot\|_{n,m}\}$ is called an $\mathrm{L}^{\infty}$-\textit{matrix norm}. An \textit{abstract operator space} is a complex vector space $\V$ equipped with an $\mathrm{L}^{\infty}$-matrix norm $\{\|\cdot\|_{n,m}\}$.

When $\{\|\cdot\|_{n,m}\}$ is an $\mathrm{L}^{\infty}$-matrix seminorm, it is automatically the case that each $\|\cdot\|_n:=\|\cdot\|_{n,n}$ is a seminorm. Indeed, if $\lambda \in \bC \setminus \{0\}$ and $A \in M_n(\X)$, then
\[ \|\lambda A \|_n \leq \| \lambda I_n\| \|A\|_n = |\lambda| \|A\|_n \quad \text{and} \quad  \| A\|_n = \|\lambda^{-1} \lambda A\|_n \leq \|\lambda^{-1}I_n \| \|\lambda A\|_n = |\lambda|^{-1} \|\lambda A\|_n \]
so that $\|\lambda A\|_n = |\lambda| \|A\|_n$. Also if $A, B \in M_n(\V)$ with $\|A\|_n \neq 0$ and $\|B\|_n \neq 0$, then
\[ \|A+B\|_n = \| \begin{pmatrix} (\|A\|_n)^{1/2} \\ (\|B\|_n)^{1/2} \end{pmatrix}^T \begin{pmatrix} \frac{A}{\|A\|_n} & 0 \\ 0 &  \frac{B}{\|B\|_n} \end{pmatrix} \begin{pmatrix} (\|A\|_n)^{1/2} \\ (\|B\|_n)^{1/2} \end{pmatrix} \|_{n} \leq \| \begin{pmatrix} (\|A\|_n)^{1/2} \\ (\|B\|_n)^{1/2} \end{pmatrix}\|^2 = \|A\|_n + \|B\|_n.  \]
It obvious that $\|\lambda A\|_n = |\lambda| \|A\|_n$ and $\|A + B \|_n \leq \|A\|_n + \|B\|_n$ when $\lambda = 0$ or $\|A\|_n = 0$ or $\|B\|_n = 0$. Hence, $\|\cdot\|_n$ is a seminorm.  Furthermore, it is well-known that to prove $\{\|\cdot\|_{n,m}\}$ is an $\mathrm{L}^\infty$-matrix seminorm, it suffices to show that $\{\|\cdot\|_n\}$ is an $\mathrm{L}^\infty$-matrix seminorm, i.e. it suffices to consider square matrices. Therefore we often only consider norms on square matrices over operator spaces.

Operator systems had been characterized earlier by Choi and Effros as matrix-ordered $*$-vector spaces equipped with an Archimedean matrix order unit in \cite{CHOIEffros1977}. While it is clear that every concrete operator system is a concrete operator space, it is also true that every abstract operator system is an abstract operator space with $\mathrm{L}^{\infty}$-matrix norms uniquely defined in terms of the matrix ordering and order unit. We will not make use of matrix orderings in this paper, so we refer the reader to \cite[Chapter 13]{paulsen2002completely} for more details.

Given operator spaces $\V$ and $\W$ and a linear map $\varphi: \V \to \W$, we define $\varphi^{(n,m)}: M_{n,m}(\V) \to M_{n,m}(\W)$ by $\varphi^{(n,m)}((a_{i,j})) = (\varphi(a_{i,j}))$, i.e. apply $\varphi$ to the entries of the matrix. If the set of maps $\varphi^{(n,m)}$ are uniformly bounded in $n,m$, we say that $\varphi$ is \textit{completely bounded} and define $\|\varphi\|_{cb}$ to be the smallest such uniform bound. We say $\varphi$ is \textit{completely contractive} if $\|\varphi\|_{cb} \leq 1$. In the case when $\V$ and $\W$ are operator systems, we say that $\varphi$ is \textit{completely positive} if $\varphi^{(n)} := \varphi^{(n,n)}$ is positive for every $n$. Owing in part to the abstract characterizations of Ruan, Choi and Effros, operator spaces are frequently regarded as the objects of a category whose morphisms are completely bounded or completely contractive maps, whereas operator systems are regarded as the objects of a category whose morphisms are completely positive or unital completely positive (abbreviated ucp) maps. A linear map $\varphi: \V \to \W$ is a \textit{complete isometry} if each map $\varphi^{(n,m)}$ is isometric. When $\V$ and $\W$ are operator systems, then $\varphi$ is called a \textit{complete order embedding} if both $\varphi$ and $\varphi^{-1}$ (restricted to the range of $\varphi$) are completely positive. It is well-known that completely positive maps on operator systems are automatically completely bounded, ucp maps are automatically completely contractive, and that a unital linear map is a complete order embedding if and only if it is a completely isometry (see \cite[Chapter 3]{paulsen2002completely}).

With the terminology developed above, we can now relate concrete and abstract operator spaces and operator systems more succinctly. Recall that if $\H$ is a Hilbert space, then $\bB(\H)$ is an operator system with matrix norms defined by identifying $M_{n,m}(\bB(\H))$ with $\bB(\H^m, \H^n)$ isometrically.

\begin{theorem}[Ruan, Choi-Effros] \label{t:RCE}
    Let $\V$ be an abstract operator space (respectively, an operator system) with matrix norm $\{\|\cdot\|_{n,m}\}$. Then there exists a Hilbert space $\H$ and a complete isometry (respectively, a unital complete isometry) $\varphi: \V \to \bB(\H)$.
\end{theorem}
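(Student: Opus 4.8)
The statement packages two classical representation theorems---Ruan's theorem for operator spaces and the Choi--Effros theorem for operator systems---so the plan is to prove the operator system case first and then deduce the operator space case from it. In both cases the architecture is identical: assemble a large family of finite-dimensional test maps that each detect the structure (positivity, respectively norm) of a single matrix-level element, and then form their Hilbert space direct sum. The direct sum is automatically unital completely positive (respectively completely contractive and uniformly bounded, hence landing in $\bB(\H)$ for $\H = \bigoplus \bC^{n_\alpha}$), so the only real content is to arrange that its inverse is as well, i.e. that the family is rich enough to witness non-positivity (respectively to attain all matrix norms).

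For the operator system case I would fix $\V$ together with its matrix cones $\{C_n\}$ and Archimedean matrix order unit. The local step is: given a self-adjoint $x \in M_n(\V)$ that is not positive, produce a unital completely positive map $\varphi : \V \to M_n$ such that $\varphi^{(n)}(x)$ is not a positive matrix. To do this I would use that the Archimedean axiom forces $C_n$ to be closed in the order-unit norm, so Hahn--Banach separates $x$ from $C_n$ by a self-adjoint functional $f$ that is nonnegative on $C_n$ and satisfies $f(x) < 0$; a standard normalization against the matrix order unit renders $f$ a (unital) positive functional on $M_n(\V)$. The standard correspondence between positive functionals on $M_n(\V)$ and completely positive maps $\V \to M_n$ (via the canonical shuffle) then turns $f$ into the desired $\varphi$, with $f(x)<0$ exactly certifying $\varphi^{(n)}(x) \not\geq 0$. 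Taking $\pi := \bigoplus \varphi$ over all non-positive elements at all levels gives a unital map into $\bB(\H)$ that is completely positive with completely positive inverse on its range; that is, a unital complete order embedding, hence a unital complete isometry by the equivalence recalled just above the theorem.

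For the operator space case I would reduce to the above via the Paulsen operator system $\P(\V) = \begin{pmatrix} \bC\,1 & \V \\ \V^* & \bC\,1 \end{pmatrix}$. The $\mathrm{L}^\infty$-matrix norm on $\V$ determines candidate matrix cones on $\P(\V)$, and Ruan's two axioms are precisely what is needed to verify that these cones, with the diagonal identity as unit, satisfy the Choi--Effros axioms and that this unit is Archimedean. Applying the operator system case yields a unital complete order embedding of $\P(\V)$ into some $\bB(\H)$; restricting to the upper-right corner returns a complete isometry $\V \to \bB(\H)$, since complete order embeddings of Paulsen systems restrict to complete isometries on the off-diagonal corner.

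The main obstacle is the inverse (co-)positivity, and it is where the Archimedean hypothesis is indispensable. Complete positivity of the direct sum is formal, but for its inverse one must know that every non-positive $x$ is genuinely detected by some matrix state, and this fails without closedness of the cones---which is exactly the content of the Archimedean condition. In the operator space reduction, the analogous delicate point is checking that the norm-defined cones on $\P(\V)$ form a bona fide Archimedean matrix order: this is the step that consumes both Ruan axioms---the bimodule inequality $\|\alpha x \beta\| \le \|\alpha\|\,\|x\|\,\|\beta\|$ to see the cones behave correctly under compression, and the direct-sum identity to control $\oplus$-decompositions---so I expect the bulk of the careful work to lie there rather than in the final direct-sum assembly.
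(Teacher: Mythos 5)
Be aware that the paper does not prove Theorem \ref{t:RCE} at all: it is quoted as the classical Ruan and Choi--Effros representation theorems, with the proofs residing in \cite{RUAN1988}, \cite{CHOIEffros1977}, and \cite[Chapter 13]{paulsen2002completely}. So the comparison here is with the classical arguments rather than with anything in the paper.

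Your sketch is correct, and its two halves have different relationships to the standard proofs. The operator system half is essentially the textbook Choi--Effros argument: Archimedeanity gives closedness of each matrix cone $C_n$ in the order-unit norm, Hahn--Banach separation yields a self-adjoint positive functional $f$ on $M_n(\V)$ with $f(x)<0$, the functional/cp-map correspondence converts $f$ into $\varphi:\V\to M_n$, and the direct sum over all witnesses is a unital complete order embedding, hence a unital complete isometry by the equivalence the paper itself recalls. One step you gloss deserves a sentence in a full write-up: the correspondence gives $\varphi$ with $\varphi(e)$ a positive matrix, not the identity, so one must perturb by a small multiple of $v\mapsto\omega(v)I$ (for a state $\omega$) to make $\varphi(e)$ invertible and then conjugate by $\varphi(e)^{-1/2}$; the detection survives because $f(x)<0$ is witnessed by a vector state, which is stable under small perturbations and under conjugation by an invertible positive. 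The operator space half, by contrast, is a genuinely different route from Ruan's original proof, which directly manufactures enough complete contractions into matrix algebras via the representation of norm-one functionals on $M_n(\V)$ by a pair of states (the Effros--Ruan/Smith lemma). Your reduction through the abstract Paulsen system $\P(\V)$ is legitimate and known: after the canonical shuffle, scalar compressions of $M_n(\P(\V))$ act block-diagonally as $\gamma\oplus\gamma$, so the corner structure is preserved, and an $\epsilon$-regularized definition of the cones (membership of $\bigl(\begin{smallmatrix} A & X \\ X^* & B \end{smallmatrix}\bigr)$ meaning $\|(A+\epsilon I)^{-1/2}X(B+\epsilon I)^{-1/2}\|\le 1$ for all $\epsilon>0$) builds Archimedeanity in by fiat, with both Ruan axioms indeed consumed in verifying closure of the cones under sums and compressions --- your diagnosis of where the work lies is accurate. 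One refinement: the final corner restriction is completely isometric not because of any special fact about order embeddings of Paulsen systems, but simply because every unital complete order embedding is a unital complete isometry, combined with the internal computation that the order-induced norm of $\P(\V)$ restricts on the corner to the original norm of $\V$ (a one-nonzero-entry norm computation from Ruan's first axiom). The trade-off between the two routes: Ruan's direct argument is self-contained at the operator space level, while your reduction buys uniformity --- a single representation theorem (Choi--Effros) drives both conclusions --- at the cost of the cone verifications on $\P(\V)$.
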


In what follows, if the domain $M_{n,m}(\V)$ is clear from context, we will sometimes denote $\|x\|_{n,m}$ simply by $\|x\|$ or $\varphi^{(n,m)}(x)$ simply by $\varphi(x)$ for $x \in M_{n,m}(\V)$.

Abstract characterizations, like the ones above for operator spaces and operator systems, provide a more flexible setting than their concrete counterparts for defining operator structures on various algebraic constructions. For instance, suppose $\V$ is an operator space and $\J \subseteq \V$ is a closed subspace. Then we can define an operator space structure on the vector space $\V / \J$ by setting \[ \|x + \J\|_{n,m}^{osp} := \inf \{ \|x + j \|_{n,m} : j \in M_{n,m}(\J) \}. \] We denote the resulting operator space $(\V / \J, \{\|\cdot\|_{n,m}^{osp}\})$ by $(\V / \J)_{osp}$. It is routine to check that $(\V / \J)_{osp}$ is an abstract operator space. On the other hand, if $\V \subseteq \bB(\H)$ is a given concrete operator space with closed subspace $\J$, it is not obvious how one realizes $\V / \J$ as a concrete operator space without appealing to Theorem \ref{t:RCE}.

Similarly, if $\V$ is an operator system with unit $e$, and if $\J \subseteq \V$ is the kernel of a ucp map, then it is possible to define an operator system structure on the quotient vector space $\V / \J$. The unit of the quotient operator system is $e + \J$. The matrix ordering on this quotient is defined in \cite{Kavruk}, but we will not need to recall that definition here. Instead, we will work with an equivalent definition stated in terms of the matrix norms. Specifically, for $x+\J \in M_{n,m}(\V / \J)$ we define,
\[ \|x + \J\|_{n,m}^{osy} := \sup \{ \| \varphi(x)\| : \varphi: \V \to \bB(\H) \text{ is ucp and } \J \subseteq \ker(\varphi) \}. \]
We let $(\V / \J)_{osy}$ denote the resulting operator system. Although we always have the inequality $\|x\|^{osy} \leq \|x\|^{osp}$, it was shown in \cite{Kavruk} that the matrix norms on $(\V / \J)_{osp}$ and $(\V / \J)_{osy}$ are generally not equal.

Many results stated throughout this paper hold for operator spaces which are unital but not necessarily self-adjoint. We will discuss abstract characterizations for such spaces below in Subsection \ref{subsec: unital op sp}. For the time being, we say that an abstract operator space $\V$ is \textit{unital} with unit $e\in \V$ if $e$ is a unit vector and there is a complete isometry $\varphi: \V \to \bB(\H)$ for some Hilbert space $\H$ such that $\varphi(e) = I_{\H}$. Given a concrete untial operator space $\V$ and a unital completely contractive map (abbreviated ucc) $\varphi: \V \to \bB(\H)$, there exists a unique ucp extension $\varphi': \V + \V^* \to \bB(\H)$ defined by $\varphi'(x + y^*) := \varphi(x) + \varphi(y)^*$ (see \cite[Proposition 3.4]{paulsen2002completely}). It follows that any abstract unital operator space $\V$ (in the sense defined above) generates a unique operator system $\V + \V^*$ (c.f. \cite{RUSSELL2017} for a more technical explanation of this fact). For these reasons, structures defined for a unital operator space $\V$ can often be related in a unique way to a corresponding structure on the operator system $\V+\V^*$. For example, suppose that $\J \subseteq \V$ is the kernel of a ucc map. Then we define 
\[ \|x + \J\|_{n,m}^{usp} := \sup \{ \| \varphi(x)\| : \varphi: \V \to \bB(\H) \text{ is ucc and } \J \subseteq \ker(\varphi) \}. \]
It is routine to check that $\V / \J$ equipped with this matrix norm is a unital operator space with unit $e+\J$. We denote this operator space as $(\V / \J)_{usp}$. Furthermore, since any ucc map $\varphi: \V \to \bB(\H)$ with $\J \subseteq \ker(\varphi)$ extends uniquely to a ucp map $\varphi': \V+\V^* \to \bB(\H)$ with $\J+\J^* \subseteq \ker(\varphi')$, we observe that the matrix norm on $(\V / \J)_{usp}$ is just the restriction of the matrix norm on $((\V+\V^*)/(\J+\J^*))_{osy}$ to the subspace $\V + (\J+\J^*)$, which in turn is linearly isomorphic to the vector space $\V / \J$.

We make use of a variety of C*-covers throughout the paper. Given a unital operator space $\V$, a \textit{C*-cover} is a C*-algebra $\A$ together with a unital completely isometric map $j: \V \to \A$ such that $\A = C^*(j(\V))$. In other words, a C*-cover is a C*-algebra generated by the image of $\V$ under some unital complete isometry. In particular, $\A$ is always a unital C*-algebra. Among the set of all C*-covers, there are two principle C*-covers that are frequently used. The first of these is the \textit{universal C*-cover} denoted $C^*_u(\V)$. This C*-cover is the ``largest'' in the sense that it satisfies the following universal property: if $\varphi: \V \to \bB(\H)$ is a ucc map, then $\varphi$ extends uniquely to a $*$-homomorphism $\pi_{\varphi}: C^*_u(\V) \to \bB(\H)$. Since ucc maps on a unital operator space $\V$ extend uniquely to ucp maps on the operator system $\V + \V^*$, we see that $C^*_u(\V) = C^*_u(\V+\V^*)$. Kirchberg and Wasserman were the first to study the universal C*-cover for an operator system in \cite{kirchberg1998c} and showed that it is residually finite dimensional. The other principle C*-cover we will use is called the \textit{C*-envelope}, denoted $C^*_e(\V)$. This is the ``smallest'' C*-cover in the sense that it satisfies the following co-universal property: if $j: \V \to C^*(\V)$ is a unital completely isometric map and $i: \V \to C^*_e(\V)$ is the C*-cover map for the C*-envelope, then there exists a unique $*$-homomorphism $\pi: C^*(\V) \to C^*_e(\V)$ such that $\pi(j(x)) = i(x)$ for all $x \in \V$. The existence of the C*-envelope was conjectured by Arveson in his seminal work \cite{Arveson69} and established by Hamana in \cite{hamana1979injective}. 

Hamana's proof established the existence of another important C*-algebra associated to a unital operator space $\V$ which is called the \textit{injective envelope}, and is denoted by $I(\V)$. A unital operator space $\S$ is called \textit{injective} provided that whenever $\V \subseteq \W$ is an inclusion of unital operator spaces and $\varphi: \V \to \S$ is ucc, then there exists a ucc extension $\varphi': \W \to \S$. For instance, Arveson's Extension Theorem, established in \cite{Arveson69}, shows that the unital operator space $\bB(\H)$ is injective. Given a unital operator space $\V$, there exists a unique smallest injective operator system $I(\V)$ that contains $\V$ isometrically. This operator system turns out to be unitally completely isometrically isomorphic to a C*-algebra. Moreover, if $i: \V \to I(\V)$ is the unital completely isometric embedding of $\V$ into $I(\V)$, then $C^*(i(\V)) = C^*_e(\V)$ (see \cite[Chapter 15]{paulsen2002completely} for more details).

We conclude with a summary of tensor products that appear in several discussions throughout the paper. Given a pair of operator spaces $\V$ and $\W$, a \textit{tensor product structure} is an operator space structure on the vector space $\V \otimes \W$ (i.e. an $\mathrm{L}^{\infty}$-matrix norm on $\V \otimes \W$) such that $\|x \otimes y\| = \|x\| \|y\|$ whenever $x \in \V$ and $y \in \W$ and such that $\varphi \otimes \psi: \V \otimes \W \to M_{nk}$ is completely contractive whenever $\varphi: \V \to M_n$ and $\psi: \W \to M_k$ are completely contractive. When $\V$ and $\W$ are unital with units $e\in \V$ and $f\in \W$, a tensor product structure on $\V \otimes \W$ is one making $\V \otimes \W$ an abstract unital operator space with unit $e \otimes f$. If $\V$ and $\W$ are operator systems with units $e\in \V$ and $f\in \W$, then an \textit{operator system tensor product structure} on $\V \otimes \W$ is an operator system structure on $\V \otimes \W$ for which $e \otimes f$ is unit, together with a conjugate linear adjoint satisfying $(v \otimes w)^* = v^* \otimes w^*$ for all $v \in \V$ and $w \in \W$ (see \cite{KAVRUK2011} for the precise definition). 

As a first example, suppose that we are given complete isometries $\pi: \V \to \bB(\H)$ and $\rho: \W \to \bB(\K)$ where $\H$ and $\K$ are Hilbert spaces. This induces a linear map $\pi \otimes \rho: \V \otimes \W \to \bB(\H \otimes \K)$ in the obvious way. The matrix norm induced on $\V \otimes \W$ by the map $\pi \otimes \rho$ is called the \textit{minimal tensor product} on $\V \otimes \W$ and is denoted by $\V \otimes_{min} \W$. It is somewhat surprising that this matrix norm does not depend on the choice of complete isometries $\pi$ and $\rho$. In fact, if $\{\|\cdot\|_{n,m}\}$ is an arbitrary operator space tensor product structure on $\V \otimes \W$, then for $x \in M_{n,m}(\V \otimes \W)$ we have $\|x\|_{min} \leq \|x\|$. It also turns out that whenever $\V$ and $\W$ are operator systems, $\V \otimes_{min} \W$ is an operator system, i.e. the maps $\pi$ and $\rho$ can be chosen to be unital, so that $e \otimes f$ is mapped to $I_{\H} \otimes I_{\H}$ under $\pi \otimes \rho$.

For a second example, suppose that $\V$ and $\W$ are operator systems. Let $i_1: \V \to C^*_u(\V)$ and $i_2: \W \to C^*_u(\W)$ denote the inclusions into the universal C*-covers. Then the linear map $i_1 \otimes i_2: \V \otimes \W \to C^*_u(\V) \otimes_{max} C^*_u(\W)$ defines matrix norms on $\V \otimes \W$. By identifying $\V \otimes \W$ with its image $(i_1 \otimes i_2)(\V \otimes \W)$ inside $C^*_u(\V) \otimes_{max} C^*_u(\W)$, we see that this is an operator system tensor product structure on $\V \otimes \W$, which we denote as $\V \otimes_c \W$. It turns out that the induced matrix norm agrees with the following: Given $x \in M_{n,m}(\V \otimes \W)$, define
\[ \|x\|^c := \sup \{ \|\varphi \cdot \psi (x) \| \} \]
where $(\varphi \cdot \psi)( a \otimes b) := \varphi(a) \psi(b)$, and the supremum is over all Hilbert spaces $\H$ and pairs ucp maps $\varphi: \V \to \bB(\H)$ and $\psi: \W \to \bB(\H)$ with commuting ranges. This tensor product, first introduced in \cite{KAVRUK2011}, has played an important role in many pivotal questions of recent interest in operator algebras, perhaps most notably through its connection to the recent solution to Connes' Embedding Problem \cite{ji2020mip}. 

Finally, we discuss an operator space tensor product that will have special relevance for unital operator spaces. We first recall the definition of the Haagerup tensor product of operator spaces, as well as its fundamental properties. Let $\X$ and $\Y$ be operator spaces. Given $(x_{ij}) \in M_{nk}(\X)$ and $(y_{ij}) \in M_{km}(\Y)$, we define
\[ (x_{ij}) \odot (y_{ij}) := ( \sum_{l=1}^k x_{il} \otimes f_{lj}) \in M_{nm}(\X \otimes \Y). \]
Given $(z_{ij}) \in M_{nm}(\X \otimes \Y)$, we define
\[ \|(z_{ij})\|_h := \inf \{ \|(x_{ij})\| \|(y_{ij})\| : (z_{ij}) = (x_{ij}) \odot (y_{ij}) \} \]
and we let $\X \otimes_h \Y$ denote the tensor product $\X \otimes \Y$ equipped with the matrix norms $\|\cdot\|_h$.

The Haagerup tensor product is known to be a tensor product in the category of operator spaces. It has the following celebrated properties.

\begin{theorem}
Let $\X,\Y,\Z,\W$ be operator spaces.
\begin{enumerate}
    \item Suppose $\phi: \X \to \Z$ and $\psi: \Y \to \W$ are complete isometries. Then $\phi \otimes \psi: \X \otimes_h \Z \to \Y \otimes_h \W$ is a complete isometry (i.e. $\otimes_h$ is injective).
    \item Suppose $\phi: \X \to \Z$ and $\psi: \Y \to \W$ are complete quotient maps. Then $\phi \otimes \psi: \X \otimes_h \Z \to \Y \otimes_h \W$ is a complete quotient map (i.e. $\otimes_h$ is projective).  
\end{enumerate}
\end{theorem}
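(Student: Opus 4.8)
The plan is to prove the two parts by playing the two descriptions of the Haagerup norm against each other: the intrinsic factorization infimum is tailor-made for projectivity, while the extrinsic description as a supremum over products of complete contractions is what drives injectivity. I would prove projectivity first, as it is essentially formal.

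For projectivity, recall that $\phi$ being a complete quotient map means precisely that $\phi^{(n,k)}$ carries the open unit ball of $M_{n,k}(\X)$ \emph{onto} the open unit ball of $M_{n,k}(\Z)$, and likewise for $\psi$; moreover this rectangular surjectivity follows from the square-matrix version through the $L^\infty$-matrix norm axioms. Functoriality of $\otimes_h$ for complete contractions shows that $\phi\otimes\psi$ is a complete contraction, so $(\phi\otimes\psi)^{(n)}$ maps the open unit ball of $M_n(\X\otimes_h\Y)$ into that of $M_n(\Z\otimes_h\W)$. For the reverse containment, take $w\in M_n(\Z\otimes_h\W)$ with $\|w\|_h<1$ and pick a factorization $w=a\odot b$ with $a\in M_{n,k}(\Z)$, $b\in M_{k,n}(\W)$ and $\|a\|<1$, $\|b\|<1$. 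Lifting $a,b$ through the quotient maps gives $\tilde a\in M_{n,k}(\X)$ and $\tilde b\in M_{k,n}(\Y)$ with $\|\tilde a\|<1$, $\|\tilde b\|<1$, $\phi^{(n,k)}(\tilde a)=a$ and $\psi^{(k,n)}(\tilde b)=b$. Then $u:=\tilde a\odot\tilde b$ satisfies $\|u\|_h\le\|\tilde a\|\,\|\tilde b\|<1$ and $(\phi\otimes\psi)^{(n)}(u)=a\odot b=w$, so the open unit ball of the target is exactly the image of the open unit ball of the source.

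For injectivity I would first reduce to inclusions $\X\subseteq\Z$ and $\Y\subseteq\W$, identifying $\X\otimes_h\Y$ with $\phi(\X)\otimes_h\psi(\Y)$. One inequality is then immediate: any factorization of $u$ with entries in $\X,\Y$ is also one with entries in $\Z,\W$, so the defining infimum is taken over a larger set in $\Z\otimes_h\W$, giving $\|u\|_{\Z\otimes_h\W}\le\|u\|_{\X\otimes_h\Y}$. For the reverse --- the substantive direction --- I would use the realization of the Haagerup norm as $\|u\|_h=\sup\|(\sigma\cdot\tau)^{(n)}(u)\|$ over Hilbert spaces $\H,\K$ and complete contractions $\sigma:\X\to\bB(\K,\H)$, $\tau:\Y\to\bB(\H,\K)$, where $(\sigma\cdot\tau)(x\otimes y)=\sigma(x)\tau(y)\in\bB(\H)$ (equivalently, the duality of $\X\otimes_h\Y$ with completely bounded bilinear forms together with the Christensen--Sinclair representation). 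Given such a pair on $\X,\Y$, the injectivity of $\bB(\H\oplus\K)$ via Wittstock's extension theorem (passing to a corner) produces complete contractions $\tilde\sigma:\Z\to\bB(\K,\H)$, $\tilde\tau:\W\to\bB(\H,\K)$ extending $\sigma,\tau$. Hence every competitor in the supremum for $\|u\|_{\X\otimes_h\Y}$ also competes for $\|u\|_{\Z\otimes_h\W}$, yielding $\|u\|_{\X\otimes_h\Y}\le\|u\|_{\Z\otimes_h\W}$; since $u\in M_n(\X\otimes_h\Y)$ is arbitrary, this is a complete isometry.

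The main obstacle is concentrated entirely in the injectivity half, and specifically in two imported facts: that the Haagerup norm is computed by the supremum over products $\sigma(x)\tau(y)$ of complete contractions, and that such maps (or the associated completely bounded bilinear forms) extend off a subspace without increasing the cb-norm. Both ultimately rest on the injectivity of $\bB(\H)$ (Arveson--Wittstock) together with the Christensen--Effros--Sinclair representation of completely bounded multilinear maps; once these are granted, the extension is automatic and injectivity follows. Projectivity, by contrast, requires no such machinery --- only the factorization definition and norm-controlled lifting through complete quotient maps.
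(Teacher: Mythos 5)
Your proposal is correct, and the paper offers no proof of its own to diverge from: this theorem is stated in Section~\ref{s:prelim} as classical background (its ``celebrated properties''), with the injectivity half later attributed to \cite[Theorem 17.4]{paulsen2002completely}. Your argument is precisely the standard one from the literature --- norm-controlled lifting of factorizations through the quotient maps for projectivity, and the Christensen--Sinclair representation of completely bounded bilinear maps combined with Arveson--Wittstock extension (via the corner of $\bB(\H\oplus\K)$) for injectivity --- and both halves, including the reduction of rectangular lifting to the square case and the rescaling of factorizations, are carried out correctly.
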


We wish to consider to what extent the statements in the above theorem are true in the categories of unital operator spaces and operator systems. Before addressing this, we recall one more important property of the Haagerup tensor product.

\begin{theorem}
    Let $\X$ and $\Y$ be operator spaces and let $\A$ and $\B$ be unital C*-algebras. Suppose that $\pi:\X \to \A$ and $\rho: \Y \to \B$ are complete isometries. Let $a * b$ denote the product of $a \in \A$ and $b \in \A$ inside the free product $\A *_1 \B$ amalgamated over the unit. Define $[\pi * \rho] (x \otimes y) := \pi(x) * \rho(y)$ and extend $\pi*\rho$ to all of $\X \otimes_h \Y$ by linearity. Then $\pi * \rho: \X \otimes_h \Y \to \A *_1 \B$ is a complete isometry.
\end{theorem}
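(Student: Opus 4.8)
The plan is to establish the two inequalities $\|(\pi*\rho)(z)\| \leq \|z\|_h$ and $\|(\pi*\rho)(z)\| \geq \|z\|_h$ at every matrix level, the first being routine and the second carrying the real content. I would begin by reducing to the case of the identity inclusions. Since $\pi$ and $\rho$ are complete isometries, the injectivity of the Haagerup tensor product recalled above shows that $\pi \otimes \rho \colon \X \otimes_h \Y \to \A \otimes_h \B$ is a complete isometry. Writing $\mu \colon \A \otimes_h \B \to \A *_1 \B$ for the canonical map $a \otimes b \mapsto a * b$, one has $\pi * \rho = \mu \circ (\pi \otimes \rho)$, so it suffices to show that $\mu$ is a complete isometry. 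Throughout I fix an element $z = \sum_i a_i \otimes b_i \in \A \otimes \B$; the matrix-level statements are handled by the same arguments applied entrywise over $M_n$.

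For the contractive direction I would combine the row--column description of the Haagerup norm with the universal property of the amalgamated free product. The full free product norm satisfies $\|c\|_{\A *_1 \B} = \sup_{\sigma,\tau} \|(\sigma * \tau)(c)\|$, the supremum running over all pairs of unital $*$-representations $\sigma \colon \A \to \bB(\H)$ and $\tau \colon \B \to \bB(\H)$ on a common Hilbert space. For such a pair, submultiplicativity of the Haagerup norm gives $\|\sum_i \sigma(a_i)\tau(b_i)\| \leq \|(\sigma(a_i))_i\|_{\mathrm{row}}\,\|(\tau(b_i))_i\|_{\mathrm{col}} \leq \|(a_i)_i\|_{\mathrm{row}}\|(b_i)_i\|_{\mathrm{col}}$, since $\sigma$ and $\tau$ are $*$-homomorphisms and hence do not increase row or column norms. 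Taking the infimum over all decompositions $z = \sum_i a_i \otimes b_i$ and the supremum over $(\sigma,\tau)$ yields $\|\mu(z)\|_{\A *_1 \B} \leq \|z\|_h$, and hence complete contractivity of $\mu$.

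The lower bound is the crux. Here I would invoke the representation theorem for the Haagerup norm, namely $\|z\|_h = \sup\{\|\sum_i \Phi(a_i)\Psi(b_i)\|\}$ over all complete contractions $\Phi \colon \A \to \bB(\H)$ and $\Psi \colon \B \to \bB(\H)$ into a common $\bB(\H)$. Fixing a near-optimal pair and dilating each factor --- via Paulsen's $2\times 2$ trick followed by Stinespring --- I can write $\Phi(a) = V^* \sigma(a) U$ and $\Psi(b) = V'^* \tau(b) U'$ for unital $*$-representations $\sigma, \tau$ and isometries $U, V, U', V'$. The main obstacle is that these two dilations live on different Hilbert spaces and are joined by the intermediate contraction $W := U V'^*$, so that $\sum_i \Phi(a_i)\Psi(b_i) = V^*\big(\sum_i \sigma(a_i)\, W\, \tau(b_i)\big)U'$ is not yet of the form $\sum_i \tilde{\sigma}(a_i)\tilde{\tau}(b_i)$ for representations on a single space. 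I would resolve this by passing to a unitary dilation of $W$ on an enlarged common Hilbert space and conjugating one of the representations accordingly, so that $\sum_i \Phi(a_i)\Psi(b_i)$ is recovered as a corner of $\sum_i \tilde{\sigma}(a_i)\tilde{\tau}(b_i)$ for unital $*$-representations $\tilde{\sigma}, \tilde{\tau}$ of $\A, \B$ on a common space. Since that quantity is dominated by $\|\mu(z)\|_{\A *_1 \B}$ through the universal property used above, this gives $\|z\|_h \leq \|\mu(z)\|$, completing the isometry; the matrix-level version follows by running the same argument over $M_n$. This step of absorbing the intermediate contraction is the only delicate point, and it is precisely where the classical content of the statement resides; alternatively, one may simply cite that the canonical map $\A \otimes_h \B \to \A *_1 \B$ is completely isometric.
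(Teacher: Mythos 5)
Your proposal is correct and follows essentially the same route as the paper, which records this theorem without proof as a classical result: it cites the non-amalgamated case (Christensen--Effros--Sinclair) and attributes the amalgamated version to Pisier, explicitly noting that the proof ``employs a dilation trick to substitute the free product with the amalgamated free product.'' Your argument --- complete contractivity of $\mu:\A\otimes_h\B \to \A *_1 \B$ from the universal property of the amalgamated free product and the row--column estimate, and the reverse inequality from the Christensen--Sinclair--Paulsen--Smith representation theorem followed by absorbing the intermediate contraction $W = UV'^*$ into a unitary dilation and conjugating one representation so that $\sum_i \Phi(a_i)\Psi(b_i)$ appears as a corner of $(\tilde{\sigma} * \tilde{\tau})(\mu(z))$ --- is precisely that classical dilation-trick proof, and is sound at all matrix levels.
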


In the above theorem, note that one obtains a complete isometry $\pi * \rho$ regardless of the choice of complete isometries $\pi$ and $\rho$. If the C*-algebra $\A *_1 \B$ is replaced with the non-amalgamated free product $\A * \B$, the result still holds \cite{CESCBMultilinear}. The above statement with amalgamated free products is due to Pisier and its proof employs a dilation trick to substitute the free product with the amalgamated free product (See \cite{PisierKirchbergSimpleProof}). We have the following consequence of the previous theorem (c.f. \cite[Theorem 3.4]{BlecherMagajna10}).

\begin{corollary}
    Suppose that $\V$ and $\W$ are unital operator spaces with units $e\in \V$ and $f\in \W$. Then $\V \otimes_h \W$ is a unital operator space with unit $e\otimes f$.
\end{corollary}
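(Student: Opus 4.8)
The plan is to deduce the corollary directly from the preceding theorem identifying the Haagerup tensor product of operator spaces inside an amalgamated free product of C*-algebras. Since $\V$ is a unital operator space, by definition there is a Hilbert space $\H$ and a complete isometry $\pi:\V \to \bB(\H)$ with $\pi(e) = I_{\H}$; likewise there is a complete isometry $\rho:\W \to \bB(\K)$ with $\rho(f) = I_{\K}$. Setting $\A := C^*(\pi(\V)) \subseteq \bB(\H)$ and $\B := C^*(\rho(\W)) \subseteq \bB(\K)$, both are \emph{unital} C*-algebras, since each contains the identity operator as the image of its unit. Thus $\pi$ and $\rho$ are complete isometries into unital C*-algebras, exactly as required to apply the theorem.

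First I would invoke that theorem to conclude that $\pi * \rho:\V \otimes_h \W \to \A *_1 \B$ is a complete isometry. The crucial computation is then to evaluate this map on the candidate unit. Because the free product is amalgamated over the unit, the images $\pi(e) = I_{\A}$ and $\rho(f) = I_{\B}$ are identified with the single unit $1$ of $\A *_1 \B$, so that
\[
(\pi * \rho)(e \otimes f) = \pi(e) * \rho(f) = 1 \cdot 1 = 1_{\A *_1 \B}.
\]
Hence $e \otimes f$ is carried precisely to the unit of the C*-algebra $\A *_1 \B$.

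To finish, I would represent the unital C*-algebra $\A *_1 \B$ faithfully and unitally on a Hilbert space $\L$ via a $*$-isomorphism onto a concrete C*-algebra $\sigma:\A *_1 \B \to \bB(\L)$ (for instance, the universal representation, or the GNS representation of a faithful state), which is automatically a unital complete isometry. Composing, $\sigma \circ (\pi * \rho):\V \otimes_h \W \to \bB(\L)$ is a complete isometry sending $e \otimes f$ to $\sigma(1_{\A *_1 \B}) = I_{\L}$. Since $\|e \otimes f\|_h = \|I_{\L}\| = 1$, the element $e \otimes f$ is a unit vector, and the existence of a complete isometry carrying it to the identity operator is exactly the definition of $\V \otimes_h \W$ being a unital operator space with unit $e \otimes f$.

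I expect the only point requiring genuine care—though it is a mild one—to be the bookkeeping in the free product: verifying that amalgamation over the unit truly identifies $I_{\A}$ and $I_{\B}$ as a common unit $1_{\A *_1 \B}$, so that $\pi(e) * \rho(f)$ equals that unit rather than merely an idempotent. Everything else (selecting unital complete isometries, passing to the generated C*-algebras, and representing a unital C*-algebra on a Hilbert space) is standard, and the cross-norm property of the Haagerup tensor product furnishes $\|e \otimes f\|_h = 1$ as a consistency check independent of the chosen embedding.
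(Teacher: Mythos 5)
Your proof is correct and follows essentially the same route as the paper: both arguments choose unital complete isometries of $\V$ and $\W$ into $\bB(\H)$ and $\bB(\K)$, invoke the preceding theorem to embed $\V \otimes_h \W$ completely isometrically into an amalgamated free product, and observe that amalgamation over the unit sends $e \otimes f$ to the identity. Your extra steps (passing to the generated C*-subalgebras and re-representing the free product on Hilbert space) merely spell out details the paper leaves implicit, and your worry about the unit identification is harmless, since identifying $I_{\A}$ with $I_{\B}$ is exactly the definition of amalgamation over $\bC 1$.
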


\begin{proof}
    Let $\pi: \V \to \bB(\H)$ and $\rho: \W \to \bB(\K)$ be unital complete order embeddings. Then $\pi * \rho: \V \otimes_h \W \to \bB(\H) *_1 \bB(\K)$ is a unital complete isometry. So $\V \otimes_h \W$ is unitally completely isometric to a unital operator space.
\end{proof}

In particular, if $\S$ and $\T$ are operator systems, then $\S \otimes_h \T$ is a unital operator space. However, since the operator space Haagerup tensor product $\S \otimes_h \T$ is not commuting, there is no compatible way to define an anti-linear involution on $\S \otimes_h \T$, so that generally $\S \otimes_h \T$ is not an operator system.

\section{Haagerup tensor product for unital operator spaces} \label{s:haagerup-tensor}

We begin by continuing the discussion, initiated at the end of the previous section, concerning the Haagerup tensor product of unital operator spaces and operator systems. Our goal here is two-fold. First, the Haagerup tensor product of operator spaces will serve as a key example of an abstract operator space with a hidden multiplicative structure that will be instructive throughout the paper. Second, the properties of Haagerup tensor prodcuts in the category of unital operator spaces have not yet been treated in the literature; so we outline several fundamental properties that are of independent interest. We will be especially interested in the C*-covers of these tensor products which will provide important examples in later sections.

Suppose $\S$ and $\T$ are operator systems. As suggested in Section \ref{s:prelim}, it is rarely the case that $\S \otimes_h \T$ is itself an operator system. This can be seen by considering the following.

\begin{proposition} \label{prop: H-tensor embeds in free prodcut}
    Let $\V_1,\V_2$ be unital operator spaces. Let $\phi_i: \V_i \to C^*(\V_i)$ denote a unital complete isometry into some C*-cover $C^*(\V_i)$ for $i=1,2$. Then $\phi_1 \otimes \phi_2: \V_1 \otimes_h \V_2 \to C^*(\V_1) *_1 C^*(\V_2)$ defined by $[\phi_1 \otimes \phi_2] (x \otimes y) = \phi_1(x) \phi_2(y)$ is a unital complete isometry satisfying $C^*(\phi_1(\V_1) \cdot \phi_2 (\V_2)) = C^*(\V_1) *_1 C^*(\V_2)$. That is, $C^*(\V_1) *_1 C^*(\V_2)$ is a C*-cover for $\V_1 \otimes_h \V_2$.
\end{proposition}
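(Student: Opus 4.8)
The plan is to deduce the proposition almost immediately from the completely isometric embedding theorem for the Haagerup tensor product into amalgamated free products stated above; the only additional work is the bookkeeping needed to upgrade the complete isometry to a \emph{unital} one and to check that the image generates the entire free product.

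First I would apply that theorem with $\X = \V_1$, $\Y = \V_2$, $\A = C^*(\V_1)$, $\B = C^*(\V_2)$, $\pi = \phi_1$, and $\rho = \phi_2$, noting that the product $\phi_1(x)\phi_2(y)$ written in the statement is exactly $\phi_1(x) * \phi_2(y)$, the product formed inside $C^*(\V_1) *_1 C^*(\V_2)$. Since each $\phi_i$ is a unital complete isometry into a (necessarily unital) C*-cover, the theorem yields at once that $\phi_1 \otimes \phi_2$ is a complete isometry, independently of the chosen covers. For unitality, using $\phi_i(e_i) = 1$ we get $(\phi_1 \otimes \phi_2)(e_1 \otimes e_2) = \phi_1(e_1)\phi_2(e_2) = 1$, the amalgamated unit; together with the preceding corollary, which makes $\V_1 \otimes_h \V_2$ a unital operator space with unit $e_1 \otimes e_2$, this shows $\phi_1 \otimes \phi_2$ is a unital complete isometry.

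For the C*-cover claim, I would exploit the units to recover each factor separately. Writing $\A$ and $\B$ for the embedded copies of $C^*(\V_1)$ and $C^*(\V_2)$, setting $y = e_2$ gives $\phi_1(x)\phi_2(e_2) = \phi_1(x)$, so $\phi_1(\V_1) \subseteq \phi_1(\V_1)\cdot\phi_2(\V_2)$, and symmetrically $\phi_2(\V_2) \subseteq \phi_1(\V_1)\cdot\phi_2(\V_2)$ by setting $x = e_1$. Because each $\phi_i$ is a C*-cover map, $C^*(\phi_i(\V_i))$ is all of $\A$ (respectively $\B$), and $\A$ together with $\B$ generate $\A *_1 \B$ by the very definition of the amalgamated free product. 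Hence $C^*(\phi_1(\V_1)\cdot\phi_2(\V_2)) \supseteq C^*(\V_1) *_1 C^*(\V_2)$; the reverse inclusion is immediate since every product $\phi_1(x)\phi_2(y)$ already lies in $C^*(\V_1) *_1 C^*(\V_2)$.

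There is essentially no genuine obstacle here: the deep content, namely that the Haagerup matrix norm coincides with the norm of the free-product realization, is delivered wholesale by the cited theorem of Pisier. The only points demanding any care are the identification of the juxtaposition notation $\phi_1(x)\phi_2(y)$ with the free-product multiplication and the observation that the units allow one to recover $\A$ and $\B$ individually inside the generated C*-algebra; both are routine.
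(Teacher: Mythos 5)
Your proposal is correct and follows essentially the same route the paper intends: the paper states this proposition without a separate proof, treating it as an immediate consequence of the quoted Pisier free-product embedding theorem together with the corollary that $\V_1 \otimes_h \V_2$ is unital with unit $e_1 \otimes e_2$. Your two supplementary checks --- unitality via $\phi_1(e_1)\phi_2(e_2) = 1$ and generation via multiplying by the units to recover $\phi_1(\V_1)$ and $\phi_2(\V_2)$ inside the product set --- are exactly the routine bookkeeping the paper leaves implicit, and they are carried out correctly.
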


Since the linear span of elements $\{ \ a \cdot b \ | \ a\in \A, \ b\in \B \ \}$ in a free product $\A *_1 \B$ of C*-algebras is necessarily not closed under involution, the range of the unital complete isometry $\phi_1 \otimes \phi_2$ will not be self-adjoint even when $\V_1$ and $\V_2$ are both self-adjoint. This is true even when $\V_1=\V_2$ and $\A:=C^*(\V_1)=C^*(\V_2)$, since the free product $\A *_1 \A$ treats the ``left-hand'' copy of $\A$ and the ``right-hand'' copy of $\A$ as distinct C*-algebras. From this family of C*-covers we obtain the following universal property.

\begin{proposition}
    Let $\phi_i: \V_i \to \bB(\H)$ be unital completely contrative maps for $i=1,2$. Then the map $\phi_1 \otimes_h \phi_2: \V_1 \otimes_h \V_2 \to  \bB(\H)$, defined by $[\phi_1 \otimes_h \phi_2](a \otimes b) = \phi_1(a) \phi_2(b)$ on elementary tensors and extended linearly, is a unital complete contraction.
\end{proposition}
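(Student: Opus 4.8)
The plan is to verify the two assertions—unitality and complete contractivity—directly from the defining formula for the Haagerup matrix norm, without invoking any realization or C*-cover machinery. First I would note that the assignment $(a,b) \mapsto \phi_1(a)\phi_2(b)$ is bilinear from $\V_1 \times \V_2$ into $\bB(\H)$, so it induces a well-defined linear map $\phi_1 \otimes_h \phi_2$ on the algebraic tensor product $\V_1 \otimes \V_2$; since we work on the algebraic tensor product, the phrase ``extended linearly'' requires no continuity argument. Unitality is then immediate: the unit of $\V_1 \otimes_h \V_2$ is $e_1 \otimes e_2$, and $[\phi_1 \otimes_h \phi_2](e_1 \otimes e_2) = \phi_1(e_1)\phi_2(e_2) = I_{\H} I_{\H} = I_{\H}$ because each $\phi_i$ is unital.

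The substance of the argument is complete contractivity, which I would establish entrywise through the $\odot$ factorizations defining $\|\cdot\|_h$. Fix $z \in M_{nm}(\V_1 \otimes \V_2)$ and a factorization $z = x \odot y$ with $x = (x_{il}) \in M_{n,k}(\V_1)$ and $y = (y_{lj}) \in M_{k,m}(\V_2)$, so that $z_{ij} = \sum_{l=1}^k x_{il} \otimes y_{lj}$. Applying the map entrywise gives $[\phi_1 \otimes_h \phi_2]^{(n,m)}(z)_{ij} = \sum_{l} \phi_1(x_{il})\phi_2(y_{lj})$, which I recognize as the $(i,j)$ entry of the matrix product $\phi_1^{(n,k)}(x)\,\phi_2^{(k,m)}(y)$. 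Under the identifications $M_{n,k}(\bB(\H)) \cong \bB(\H^k,\H^n)$ and $M_{k,m}(\bB(\H)) \cong \bB(\H^m,\H^k)$, this matrix product is exactly the composition of the operators $\phi_1^{(n,k)}(x)$ and $\phi_2^{(k,m)}(y)$, landing in $\bB(\H^m,\H^n) \cong M_{n,m}(\bB(\H))$.

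With this identification in hand, the estimate follows from submultiplicativity of the operator norm under composition, together with complete contractivity of the $\phi_i$:
\[
\bigl\| [\phi_1 \otimes_h \phi_2]^{(n,m)}(z)\bigr\| = \bigl\|\phi_1^{(n,k)}(x)\,\phi_2^{(k,m)}(y)\bigr\| \le \bigl\|\phi_1^{(n,k)}(x)\bigr\|\,\bigl\|\phi_2^{(k,m)}(y)\bigr\| \le \|x\|_{n,k}\,\|y\|_{k,m}.
\]
Taking the infimum over all factorizations $z = x \odot y$ then yields $\bigl\| [\phi_1 \otimes_h \phi_2]^{(n,m)}(z)\bigr\| \le \|z\|_h$, which is precisely complete contractivity.

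I do not anticipate a genuine obstacle: the only care required is the bookkeeping that translates ``apply $\phi_1,\phi_2$ entrywise and contract along the $\odot$ index'' into honest matrix multiplication of operator-valued matrices, together with the compatibility of the rectangular identifications $M_{n,k}(\bB(\H)) \cong \bB(\H^k,\H^n)$ with composition of operators. Once that is arranged, complete contractivity is forced by the definition of $\|\cdot\|_h$ as an infimum over such factorizations. I note that an alternative route is available: each $\phi_i$ extends to a $*$-homomorphism on $C^*_u(\V_i)$, which by the universal property of the amalgamated free product yields a $*$-homomorphism on $C^*(\V_1) *_1 C^*(\V_2)$; composing with the complete isometry of Proposition~\ref{prop: H-tensor embeds in free prodcut} gives the same conclusion. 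The direct computation, however, is shorter and fully self-contained.
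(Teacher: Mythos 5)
Your proof is correct, and it takes a genuinely different route from the paper's. The paper's own proof is a short application of machinery already in place: each $\phi_i$ extends to a $*$-homomorphism $\rho_i: C^*_u(\V_i) \to \bB(\H)$ by the universal property of the universal C*-cover, these combine to a $*$-homomorphism $\rho_1 * \rho_2$ on $C^*_u(\V_1) *_1 C^*_u(\V_2)$, and the desired map is the restriction along the completely isometric embedding $\V_1 \otimes_h \V_2 \subseteq C^*_u(\V_1) *_1 C^*_u(\V_2)$ of Proposition~\ref{prop: H-tensor embeds in free prodcut} --- precisely the alternative you sketch in your final remarks. Your main argument instead works directly from the definition of $\|\cdot\|_h$ as an infimum over $\odot$-factorizations: applying the map entrywise converts $z = x \odot y$ into the genuine operator-matrix product $\phi_1^{(n,k)}(x)\,\phi_2^{(k,m)}(y)$, and submultiplicativity of composition together with complete contractivity of each $\phi_i$ bounds the norm by $\|x\|\,\|y\|$ for every factorization, hence by the infimum $\|z\|_h$; since the paper takes $\V_1 \otimes_h \V_2$ to be the algebraic tensor product, no completion or continuity argument is needed, and every $z$ admits at least one factorization so the infimum is over a nonempty set. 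This is in essence the elementary half of the Christensen--Sinclair linearization. What each approach buys: the paper's route is shorter given what has already been established and keeps the proposition embedded in the C*-cover framework that organizes the whole section, but it silently rests on Pisier's nontrivial theorem that the Haagerup tensor product embeds completely isometrically in the amalgamated free product; your computation is fully self-contained, avoids that theorem entirely, and makes visible that unitality of the $\phi_i$ is used only to verify $[\phi_1 \otimes_h \phi_2](e_1 \otimes e_2) = I_{\H}$, while complete contractivity holds for arbitrary completely contractive $\phi_1, \phi_2$.
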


\begin{proof}
    By the universal property of $C^*_u(\V_i)$, there exist unique $*$-homomorphisms $\rho_i: C^*_u(\V_i) \to \bB(\H)$ extending $\phi_i$. Hence we have a $*$-homomorphism $\rho_1 * \rho_2: C^*_u(\V_1) *_1 C^*_u(\V_2) \to \bB(\H)$. Since $\V_1 \otimes_h \V_2 \subseteq C^*_u(\V_1) *_1 C^*_u(\V_2)$ completely isometrically, the restriction of $\rho_1 * \rho_2$ to $\V_1 \otimes_h \V_2$ is the desired map.
\end{proof}

Similarly, we can obtain unital maps between Haagerup tensor products of unital spaces given unital maps between the composite subspaces.

\begin{proposition}
    Let $\phi_i: \V_i \to \W_i$ be unital completely contrative maps for $i=1,2$. Then the map $\phi_1 \otimes_h \phi_2: \V_1 \otimes_h \V_2 \to  \W_1 \otimes_h \W_2$ (defined by $[\phi_1 \otimes_h \phi_2](a \otimes b) = \phi_1(a) \otimes \phi_2(b)$ on elementary tensors and extended linearly) is a unital complete contraction. Moreover, if each map $\phi_i$ is completely isometric, then $\phi_1 \otimes_h \phi_2$ is a unital complete isometry.
\end{proposition}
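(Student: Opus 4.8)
The plan is to derive both assertions from the corresponding facts for the operator space Haagerup tensor product, adding only a short check that the map respects units. Neither assertion requires the full unital machinery; the unital structure rides along essentially for free.

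First I would establish the complete contraction directly from the definition of the Haagerup matrix norm. Fix $p,q$ and $z \in M_{p,q}(\V_1 \otimes \V_2)$, and let $z = x \odot y$ be any factorization with $x \in M_{p,k}(\V_1)$ and $y \in M_{k,q}(\V_2)$. Applying $\phi_1 \otimes \phi_2$ entrywise and using the definition of $\odot$, one checks that $(\phi_1 \otimes \phi_2)^{(p,q)}(x \odot y) = \phi_1^{(p,k)}(x) \odot \phi_2^{(k,q)}(y)$, which is immediate from the bilinearity of $\odot$ and the entrywise definition of the amplifications. Since each $\phi_i$ is completely contractive, $\|\phi_1^{(p,k)}(x)\| \leq \|x\|$ and $\|\phi_2^{(k,q)}(y)\| \leq \|y\|$, so the defining formula for $\|\cdot\|_h$ on $\W_1 \otimes_h \W_2$ gives $\|(\phi_1 \otimes \phi_2)(z)\|_h \leq \|x\|\,\|y\|$. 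Taking the infimum over all factorizations of $z$ yields $\|(\phi_1 \otimes \phi_2)(z)\|_h \leq \|z\|_h$, so $\phi_1 \otimes \phi_2$ is completely contractive (and in particular bounded, hence well defined on the completion). For unitality I would simply note that $\phi_i(e_i) = f_i$, where $e_i$ and $f_i$ denote the units of $\V_i$ and $\W_i$, so $(\phi_1 \otimes \phi_2)(e_1 \otimes e_2) = f_1 \otimes f_2$, which is precisely the unit of $\W_1 \otimes_h \W_2$.

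For the second assertion, when each $\phi_i$ is moreover completely isometric, I would invoke injectivity of the operator space Haagerup tensor product: property (1) of the theorem recalled above states that the Haagerup tensor of two complete isometries is again a complete isometry. Since unital complete isometries are in particular complete isometries, $\phi_1 \otimes \phi_2$ is a complete isometry, and it is unital by the computation just given; hence it is a unital complete isometry.

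I expect no serious obstacle here, since the content is entirely inherited from the operator space category. The only points requiring care are the matrix-level identity $(\phi_1 \otimes \phi_2)(x \odot y) = \phi_1(x) \odot \phi_2(y)$ and the observation that the unital structure plays no role beyond fixing the image of the unit. In particular, the isometry statement does not require a separately argued ``unital'' version of injectivity: the operator space statement already produces a complete isometry, and unitality follows from the single computation on $e_1 \otimes e_2$.
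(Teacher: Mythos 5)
Your proof is correct, but your argument for the complete contractivity is genuinely different from the paper's. The paper proves the first assertion through C*-cover machinery: it extends each $\phi_i$ (with range enlarged to a C*-cover $C^*(\W_i)$) to a $*$-homomorphism $\tilde\phi_i : C^*_u(\V_i) \to C^*(\W_i)$ via the universal property of the universal C*-cover, forms the free product $*$-homomorphism $\tilde\phi_1 * \tilde\phi_2 : C^*_u(\V_1) *_1 C^*_u(\V_2) \to C^*(\W_1) *_1 C^*(\W_2)$, and restricts it to $\V_1 \otimes_h \V_2$, using the earlier proposition that the Haagerup tensor product embeds unitally completely isometrically into the amalgamated free product of covers (which ultimately rests on Pisier's theorem). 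You instead argue directly from the definition of the Haagerup matrix norm, via the identity $(\phi_1 \otimes \phi_2)^{(p,q)}(x \odot y) = \phi_1^{(p,k)}(x) \odot \phi_2^{(k,q)}(y)$ and an infimum over factorizations, then check unitality on $e_1 \otimes e_2$; this is the standard functoriality of $\otimes_h$ for completely contractive maps, made elementary and self-contained. Your route buys independence from Pisier's free product embedding and works verbatim for arbitrary completely contractive maps between operator spaces, with unitality riding along from the single computation on the unit (it does implicitly use the earlier corollary that $e_1 \otimes e_2$ is indeed the unit of the Haagerup tensor product, so that "unital" has its intended meaning on both sides). The paper's route buys structural information that it reuses: the induced map is exhibited as the restriction of a $*$-homomorphism between free products, which fits the section's broader program of tracking C*-covers of $\V_1 \otimes_h \V_2$. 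For the second assertion the two proofs coincide: both simply invoke injectivity of the operator space Haagerup tensor product and note that unitality has already been checked.
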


\begin{proof}
    Let $C^*(\W_i)$ be covers for the unital operator spaces $\W_i$ and let $C^*_u(\V_i)$ be the universal covers for the unital operator spaces $\V_i$. Then we may extend the range of $\phi_i$ to obtain $\phi_i:\V_i \to C^*(\W_i)$. By the universal property of the universal covers, there exist unique $*$-homomorphisms $\tilde{\phi}_i: C_u^*(\V_i) \to C^*(\W_i)$ extending $\phi_i$. By the universal properties of free products, there exists an associated $*$-homomorphism $\tilde{\phi}_1 * \tilde{\phi}_2: C^*_u(\V_1) *_1 C^*_u(\V_2) \to C^*(\W_1) *_1 C^*(\W_2)$. The restriction of this $*$-homomorphism to $\V_1 \otimes_h \V_2$ is the desired map $\phi_1 \otimes_h \phi_2$.

    For the final statement, we recall that whenever $\phi_1$ and $\phi_2$ are completely isometric, then $\phi_1 \otimes \phi_2$ is a complete isometry of operator spaces by the injectivity of the Haagerup tensor product (c.f. \cite[Theorem 17.4]{paulsen2002completely}). So $\phi_1 \otimes \phi_2$ is a unital complete isometry in this case.
\end{proof}

\begin{corollary}
    The Haagerup tensor product is injective in the category of unital operator spaces; i.e. if $\V_1 \subseteq \W_1$ and $\V_2 \subseteq \W_2$ are completely iosmetric inclusions of unital operator spaces, then $\V_1 \otimes_h \V_2 \subseteq \W_1 \otimes_h \W_2$ is a completely isometric inclusion of unital operator spaces.
\end{corollary}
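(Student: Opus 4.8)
The plan is to invoke the preceding proposition applied to the inclusion maps. The hypothesis that $\V_1 \subseteq \W_1$ and $\V_2 \subseteq \W_2$ are completely isometric inclusions of unital operator spaces means precisely that the inclusion maps $\iota_i : \V_i \to \W_i$ are unital complete isometries for $i=1,2$. I would therefore set $\phi_i = \iota_i$ and appeal directly to the ``moreover'' clause of the previous proposition, which asserts that when each $\phi_i$ is completely isometric, the induced map $\phi_1 \otimes_h \phi_2$ is a unital complete isometry.

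Concretely, the proposition yields that $\iota_1 \otimes_h \iota_2 : \V_1 \otimes_h \V_2 \to \W_1 \otimes_h \W_2$, determined on elementary tensors by $(\iota_1 \otimes_h \iota_2)(a \otimes b) = \iota_1(a) \otimes \iota_2(b) = a \otimes b$, is a unital complete isometry. Identifying $\V_1 \otimes_h \V_2$ with its image under this embedding then exhibits it as a completely isometric unital operator subspace of $\W_1 \otimes_h \W_2$, which is exactly the desired conclusion. The unital structure is preserved because each $\iota_i$ carries the unit of $\V_i$ to the unit of $\W_i$, so that $e \otimes f$ (the unit of $\V_1 \otimes_h \V_2$) maps to the unit of $\W_1 \otimes_h \W_2$.

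I do not anticipate any genuine obstacle, since the entire content of the statement is already captured by the previous proposition; this is a purely formal specialization to inclusion maps. The only substantive work has been done upstream, where the proof of that proposition rests on the injectivity of the operator-space Haagerup tensor product (\cite[Theorem 17.4]{paulsen2002completely}) together with the functoriality of the amalgamated free product of universal C*-covers. In writing the proof I would simply note these observations rather than reprove them.
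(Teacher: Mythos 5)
Your proposal is correct and matches the paper's argument: the corollary is stated in the paper without proof precisely because it is the immediate specialization of the ``moreover'' clause of the preceding proposition to the unital completely isometric inclusion maps $\iota_i : \V_i \to \W_i$, exactly as you do. Your additional remarks on unitality and on where the substantive work lies (injectivity of the operator-space Haagerup tensor product and the free-product embedding) are accurate and consistent with the paper.
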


We have seen that $\V_1 \otimes_h \V_2$ embeds completely isometrically into $\B_1 *_1 \B_2$ when $\B_1$ and $\B_2$ are C*-covers of $\V_1$ and $\V_2$ respectively. It is natural to wonder if all C*-covers have this form. The next result shows that this is not the case.

\begin{proposition} \label{p:not-prod-cov}
    Suppose that $\V_1$ and $\V_2$ are non-trivial unital operator spaces. Let $i: \V_1 \otimes_h \V_2 \to C^*_u(\V_1 \otimes_h \V_2)$ be the canonical embedding. If $a \in \V_1$ and $b \in \V_2$ are non-scalar, then $i(a \otimes b) \neq i(a \otimes I) i(I \otimes b)$. Hence the canonical $*$-homomorphism $\rho: C^*_u(\V_1 \otimes_h \V_2) \to C^*_u(\V_1) *_1 C^*_u(\V_2)$ extending the identity map on $\V_1 \otimes_h \V_2$ is not a $*$-isomorphism. 
\end{proposition}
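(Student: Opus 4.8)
The statement has two parts, and the second follows readily from the first. Indeed, the canonical $*$-homomorphism $\rho$ satisfies $\rho(i(z)) = j(z)$ for every $z \in \V_1 \otimes_h \V_2$, where $j$ is the embedding into the free product from Proposition~\ref{prop: H-tensor embeds in free prodcut}. Since $j(a\otimes b)=\phi_1(a)\phi_2(b)$, $j(a\otimes I)=\phi_1(a)$ and $j(I\otimes b)=\phi_2(b)$, and $\rho$ is multiplicative, we get $\rho\big(i(a\otimes b)-i(a\otimes I)i(I\otimes b)\big)=\phi_1(a)\phi_2(b)-\phi_1(a)\phi_2(b)=0$. Thus the element $i(a\otimes b)-i(a\otimes I)i(I\otimes b)$ lies in $\ker\rho$, and once we know it is nonzero, $\rho$ cannot be injective. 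So the entire content is the first assertion, $i(a\otimes b)\neq i(a\otimes I)i(I\otimes b)$.

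To separate these two elements the plan is to produce a \emph{single} unital completely contractive map $\Phi\colon \V_1\otimes_h\V_2\to\bB(\H)$ with $\Phi(a\otimes b)\neq\Phi(a\otimes I)\Phi(I\otimes b)$; by the universal property of $C^*_u(\V_1\otimes_h\V_2)$ its extension to a $*$-homomorphism then detects a nonzero difference. The first thing to notice is that every ucc map of \emph{product form} $\phi_1\otimes_h\phi_2$ automatically satisfies $\Phi(a\otimes b)=\phi_1(a)\phi_2(b)=\Phi(a\otimes I)\Phi(I\otimes b)$, and likewise the free product embedding $j$ and the canonical map into $\V_1\otimes_{min}\V_2$ preserve the product; such maps are therefore useless. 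I must build a genuinely non-multiplicative ucc map, and the natural device is the factorization property of the Haagerup norm: any pair of complete contractions $\phi\colon\V_1\to\bB(\K,\H)$ and $\psi\colon\V_2\to\bB(\H,\K)$ yields a complete contraction $\Phi(x\otimes y)=\phi(x)\psi(y)$, which is unital as soon as $\phi(I)\psi(I)=I_\H$.

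Concretely, I would realize $\V_1,\V_2\subseteq\bB(\H_0)$ as unital complete order embeddings on a common Hilbert space (taking $\H_0=\H_1\oplus\H_2$ and padding with states if necessary), so that $a\mapsto A$ and $b\mapsto B$ become non-scalar operators, and fix unital contractive functionals $\mu$ on $\V_1$ and $\nu$ on $\V_2$ (these exist by Hahn--Banach since $I$ is a unit vector). With $\H=\H_0$ and $\K=\H_0\oplus\H_0$, I set the row and column maps
\[
\phi(x)=\tfrac{1}{\sqrt2}\begin{pmatrix} x & \mu(x)I\end{pmatrix},\qquad \psi(y)=\tfrac{1}{\sqrt2}\begin{pmatrix}\nu(y)I\\ y\end{pmatrix}.
\]
Using $\|[\,R_1\ R_2\,]\|^2=\|R_1R_1^*+R_2R_2^*\|$ together with $\|\mu\|_{cb}=\|\mu\|\le1$ (and similarly for $\nu$), one checks that $\phi,\psi$ are complete contractions, while $\mu(I)=\nu(I)=1$ gives $\phi(I)\psi(I)=I_{\H}$, so $\Phi(x\otimes y)=\phi(x)\psi(y)$ is ucc. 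A direct computation then produces the clean identity $\Phi(a\otimes b)-\Phi(a\otimes I)\Phi(I\otimes b)=-\tfrac14\,(A-\mu(a)I)(B-\nu(b)I)$, the $2\times2$ ``mismatch'' coming precisely from $\psi(I)\phi(I)\neq I_{\K}$.

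It remains to see that this operator is nonzero, which I expect to be the only genuinely delicate point. Since $B$ is non-scalar, $B-\nu(b)I\neq0$, so I can choose $\eta$ with $\zeta:=(B-\nu(b)I)\eta\neq0$; then $(A-\mu(a)I)\zeta=0$ can hold for at most one value of the scalar $\mu(a)$ (the eigenvalue of $A$ at $\zeta$, if any). Because $a$ is non-scalar, the fact that states separate points of the generated operator system $\V_1+\V_1^*$ shows that the achievable values $\{\mu(a):\mu\text{ unital contractive}\}$ form a set with more than one point, so I may choose $\mu$ avoiding that single bad value; then $(A-\mu(a)I)(B-\nu(b)I)\eta=(A-\mu(a)I)\zeta\neq0$. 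This gives $\Phi(a\otimes b)\neq\Phi(a\otimes I)\Phi(I\otimes b)$, hence $i(a\otimes b)\neq i(a\otimes I)i(I\otimes b)$, and the statement about $\rho$ follows as above. The main obstacle here is conceptual as much as technical: one must recognize that all product-form representations collapse the distinction and instead exploit the extra freedom in the Haagerup factorization; the residual technical nuisance is merely arranging that the product of the two non-scalar factors does not vanish.
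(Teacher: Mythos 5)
Your proof is correct, and at the level of strategy it matches the paper's: both arguments manufacture a single ucc map on $\V_1 \otimes_h \V_2$ that fails multiplicativity across the tensor factors, extend it to a $*$-homomorphism on $C^*_u(\V_1 \otimes_h \V_2)$ by the universal property, and note that the corresponding difference vanishes in $C^*_u(\V_1) *_1 C^*_u(\V_2)$; both also rest on the same separation fact, that a non-scalar element of a unital operator space admits unital contractive functionals taking at least two distinct values (the paper proves this via the spectrum in $C^*_e$, functional calculus, and Krein extension; your numerical-range phrasing is an equivalent route). Where you genuinely diverge is in the witness. The paper stays scalar-valued: it forms $\psi = \psi_1 \oplus \psi_2$ and $\phi = \phi_1 \oplus \phi_2$ into the diagonal of $M_2$ and compresses $\psi \otimes \phi$ by the rank-one projection onto $\tfrac{1}{\sqrt{2}}(1,1)^T$, so the failure of multiplicativity is certified by the single numerical inequality $\lambda_1\mu_1 + \lambda_2\mu_2 \neq \tfrac{1}{2}(\lambda_1+\lambda_2)(\mu_1+\mu_2)$, available whenever $\lambda_1 \neq \lambda_2$ and $\mu_1 \neq \mu_2$. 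You instead keep an operator-valued leg, invoking the row--column factorization property of the Haagerup norm with $\phi(x) = \tfrac{1}{\sqrt{2}}\,( x \;\; \mu(x)I )$ and the matching column $\psi(y)$; this buys the clean structural identity $\Phi(a \otimes b) - \Phi(a \otimes I)\Phi(I \otimes b) = -\tfrac{1}{4}(A - \mu(a)I)(B - \nu(b)I)$, which isolates exactly why multiplicativity fails, at the cost of an extra nonvanishing step --- which you handle correctly, since for fixed $\zeta = (B-\nu(b)I)\eta \neq 0$ there is at most one bad value of $\mu(a)$, and the achievable values of $\mu(a)$ form a set with more than one point. In fact your map is the operator-valued sibling of the paper's: replacing your identity-representation slots by a second pair of functionals recovers the paper's compression. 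Two comparative remarks: the paper's construction is entirely finite dimensional and produces a character on $C^*_u(\V_1 \otimes_h \V_2)$, and because its two $M_2$-valued legs have commuting (diagonal) ranges it transfers verbatim to the commuting tensor product, which the paper exploits in Remark \ref{rmk: commuting universal cover does not commmute}; your Haagerup factorization does not directly adapt to $\otimes_c$, since $\Phi$ is only known to be contractive for the Haagerup norm. On the other hand, your defect formula shows the failure is generic in $(\mu,\nu)$ rather than tied to one arithmetic coincidence, which is arguably more conceptual.
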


{
\begin{proof}
    First, suppose $\W$ is a non-trivial unital operator space, and $x \in \W$ is non-scalar. We claim there exist ucc maps $\psi_1, \psi_2: \W \to \mathbb{C}$ such that $\psi_1(x) \neq \psi_2(x)$. To see this, it suffices to show that there exist states $\psi_1, \psi_2: \W + \W^* \to \mathbb{C}$ such that either $\psi_1(\Re(x)) \neq \psi_2(\Re(x))$ or $\psi_1(\Im(x)) \neq \psi_2(\Im(x))$, since $\psi_i(\Re(x)) = \Re(\psi_i(x))$, $\psi_i(\Im(x)) = \Im(\psi_i(x))$, and since every ucc map on $\V_i$ extends uniquely to a state on $\W + \W^*$. Now if $x$ is non-scalar, then either $\Re(x)$ or $\Im(x)$ is non-scalar in $\W + \W^*$ and hence this self-adjoint operator will have real numbers $r_1 \neq r_2$ in its spectrum in $C^*_e(\W) = C_e^*(\W+\W^*)$. By functional calculus and Krein's extension theorem, for each point $\lambda$ in the spectrum of a self-adjoint operator in $C^*_e(\W)$ there exists a state on $C^*_e(\W)$ mapping the operator to $\lambda$. Hence, the ucc maps $\psi_1,\psi_2$ exist.

    By the above considerations, and by taking convex combinations of states if necessary, there exist states $\psi_1, \psi_2: \V_1 \to \mathbb{C}$ and $\phi_1, \phi_2: \V_2 \to \mathbb{C}$ such that $\psi_1(a) = \lambda_1, \psi_2(a) = \lambda_2 , \phi_1(b) = \mu_1$, $\phi_2(b) = \mu_2 $, and 
    \[ \lambda_1 \mu_1 + \lambda_2 \mu_2 \neq \frac{1}{2}(\lambda_1 + \lambda_2)(\mu_1 + \mu_2). \]
    Define $\psi: \V_1 \to M_2$ by $\psi = \psi_1 \oplus \psi_2$ and $\phi: \V_2 \to M_2$ by $\phi = \phi_1 \oplus \phi_2$. Let 
    
    \[ P = \frac{1}{2} \begin{pmatrix} 1 & 1 \\ 1 & 1 \end{pmatrix}. \] 
    
    Then, since $P$ is a rank one projection, the map $\theta: x \mapsto P(\psi \otimes \phi)(x)P$ is a ucp map from $\V_1 \otimes_h \V_2$ to $\mathbb{C}$ (identifying $\mathbb{C}$ with the range of the rank one projection $P$). However, $\theta(a \otimes b) \neq \theta(a \otimes I) \theta(I \otimes b)$, so that $\theta$ does not extend to a $*$-homomorphism on $C^*_u(\V_1) *_1 C^*_u(\V_2)$. On the other hand $\theta$ does extend to a $*$-homomorphism from $C^*_u(\V_1 \otimes_h \V_2) \to \mathbb{C}$, so the conclusion follows.
\end{proof}
}

{
\begin{remark} \label{rmk: commuting universal cover does not commmute}
    \emph{By a similar arguments as in the proof of Proposition \ref{p:not-prod-cov}, we see that that there exists a $*$-homomorphism $\theta: C^*_u(\V_1 \otimes_c \V_2) \to \mathbb{C}$ such that $\theta(a \otimes b) \neq \theta(a \otimes I)\theta(I \otimes b)$ for some $a \in \V_1$ and $b \in \V_2$. This is because the maps $\psi$ and $\phi$ constructed in the above proof have commuting range, while their compression by $P$ does not factor as a product of maps with commuting range. It follows that the natural quotient map $C^*_u(\V_1 \otimes_c \V_2) \rightarrow C^*_u(\V_1) \otimes_{max} C^*_u(\V_2)$ is not injective whenever $\V_1$ and $\V_2$ are non-trivial operator systems.}
\end{remark}}

Despite the above proposition, there is still a close relationship between $C^*_u(\V_1 \otimes_h \V_2)$ and $C^*_u(\V_1) *_1 C^*_u(\V_2)$.

\begin{proposition}
    The C*-cover $C^*_u(\V_1) *_1 C^*_u(\V_2)$ is a retract of the universal cover $C^*_u(\V_1 \otimes_h \V_2)$, i.e. there exists an isometric unital $*$-homomorphism $i:C^*_u(\V_1) *_1 C^*_u(\V_2) \to  C^*_u(\V_1 \otimes_h \V_2)$ and a unital surjective $*$-homomorphism $\rho: C^*_u(\V_1 \otimes_h \V_2) \to C^*_u(\V_1) *_1 C^*_u(\V_2)$ such that $\rho \circ i(x) = x$ for all $x \in C^*_u(\V_1) *_1 C^*_u(\V_2)$.
\end{proposition}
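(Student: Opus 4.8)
The plan is to build the two maps separately and then verify the retraction identity on generators, after which the isometry of $i$ is automatic.

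First I would construct $\rho$. By Proposition~\ref{prop: H-tensor embeds in free prodcut}, the map $j := \phi_1 \otimes \phi_2 : \V_1 \otimes_h \V_2 \to C^*_u(\V_1) *_1 C^*_u(\V_2)$ (with $\phi_i$ the canonical inclusions $\V_i \to C^*_u(\V_i)$) is a unital complete isometry whose range generates $C^*_u(\V_1) *_1 C^*_u(\V_2)$ as a C*-algebra. In particular $j$ is a unital completely contractive map into a C*-algebra, so the universal property of $C^*_u(\V_1 \otimes_h \V_2)$ produces a unital $*$-homomorphism $\rho : C^*_u(\V_1 \otimes_h \V_2) \to C^*_u(\V_1) *_1 C^*_u(\V_2)$ extending $j$. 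Since $j(\V_1 \otimes_h \V_2)$ generates the target, $\rho$ is surjective. This is precisely the canonical map appearing in Proposition~\ref{p:not-prod-cov}.

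Next I would construct $i$. Let $e_1 \in \V_1$ and $e_2 \in \V_2$ denote the units, and consider the corner inclusions $\V_1 \to \V_1 \otimes_h \V_2$, $x \mapsto x \otimes e_2$, and $\V_2 \to \V_1 \otimes_h \V_2$, $y \mapsto e_1 \otimes y$. Each is a unital complete isometry: identifying $\V_1 \otimes_h \bC e_2 \cong \V_1$ completely isometrically and applying injectivity of the Haagerup tensor product (in the category of unital operator spaces) to the unital complete isometry $\bC e_2 \hookrightarrow \V_2$ shows that $x \mapsto x \otimes e_2$ is completely isometric, and symmetrically for the other corner. Composing with the canonical embedding $\V_1 \otimes_h \V_2 \hookrightarrow C^*_u(\V_1 \otimes_h \V_2)$ yields unital complete isometries $\V_i \to C^*_u(\V_1 \otimes_h \V_2)$, which by the universal property of $C^*_u(\V_i)$ extend to unital $*$-homomorphisms $\sigma_i : C^*_u(\V_i) \to C^*_u(\V_1 \otimes_h \V_2)$. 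As $\sigma_1$ and $\sigma_2$ are both unital they agree on the amalgamated copy of $\bC$, so the universal property of the amalgamated free product furnishes a unital $*$-homomorphism $i := \sigma_1 *_1 \sigma_2 : C^*_u(\V_1) *_1 C^*_u(\V_2) \to C^*_u(\V_1 \otimes_h \V_2)$.

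Finally I would check $\rho \circ i = \mathrm{id}$, for which it suffices to test on the generating copies of $\V_1$ and $\V_2$ inside the free product. For $x \in \V_1$ we have $\rho(i(\phi_1(x))) = \rho(\sigma_1(\phi_1(x)))$, and since $\sigma_1(\phi_1(x))$ is the image of $x \otimes e_2$ inside $C^*_u(\V_1 \otimes_h \V_2)$, applying $\rho$ returns $j(x \otimes e_2) = \phi_1(x)\phi_2(e_2) = \phi_1(x)$, the original generator; the computation for $\V_2$ is symmetric. Because these elements generate $C^*_u(\V_1) *_1 C^*_u(\V_2)$ and $\rho \circ i$ is a $*$-homomorphism, we conclude $\rho \circ i = \mathrm{id}$. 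Hence $i$ is injective, and an injective $*$-homomorphism of C*-algebras is isometric, which gives the desired retraction. I expect the only genuinely delicate point to be the verification that the corner maps $x \mapsto x \otimes e_2$ and $y \mapsto e_1 \otimes y$ are complete isometries; the remainder is a formal assembly of the universal properties of $C^*_u(\cdot)$ and of the amalgamated free product.
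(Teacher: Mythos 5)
Your proposal is correct and follows essentially the same route as the paper: $\rho$ comes from the universal property of $C^*_u(\V_1 \otimes_h \V_2)$ applied to the embedding into $C^*_u(\V_1) *_1 C^*_u(\V_2)$, and $i$ is assembled from the corner inclusions $\V_j \to \V_1 \otimes_h \V_2 \subseteq C^*_u(\V_1 \otimes_h \V_2)$ via the universal properties of $C^*_u(\V_j)$ and of the amalgamated free product. Your added details --- justifying the corner maps as complete isometries via injectivity of $\otimes_h$ and checking $\rho \circ i = \mathrm{id}$ explicitly on generators --- only fill in steps the paper leaves implicit.
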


\begin{proof}
The existence of $\rho$ follows from the universal property of $C^*_u$, and the fact that $\V_1 \otimes_h \V_2$ is unitally completely isometrically embedded in $C^*_u(\V_1) *_1 C^*_u(\V_2)$. On the other hand, again by the universal property of the maximal C*-algebra the unital complete isometries $\iota_j : \V_j \rightarrow \V_1 \otimes_h \V_2 \subseteq C^*_u(\V_1 \otimes_h \V_2)$ extend to $*$-homomorphisms $\iota_j : C^*_u(\V_j) \rightarrow C^*_u(\V_1 \otimes_h \V_2)$, $j=1,2$, which then in turn extend to the desired map $\iota = \iota_1 * \iota_2 : C^*_u(\V_1) *_1 C^*_u(\V_2) \rightarrow C^*_u(\V_1 \otimes_h \V_2)$ which satisfies $\rho \circ \iota (x) = x$ for every $x\in C^*_u(\V_1) *_1 C^*_u(\V_2)$.
\end{proof}

On the other hand, the C*-envelope is better behaved with respect to products.

\begin{proposition}
    Let $j: \V_1 \otimes_h \V_2 \to C^*_e(\V_1 \otimes_h \V_2)$ denote the canonical inclusion. Then $j(a \otimes b) = j(a \otimes I) j(I \otimes b)$. If we let $C^*(\V_i)$ denote the C*-algebra generated by $j(\V_i)$ inside $C^*_e(\V_1 \otimes_h \V_2)$, then canonically we have $C^*(\V_i) \cong C^*_e(\V_i)$.
\end{proposition}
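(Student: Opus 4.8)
The plan is to route everything through the co-universal property of the C*-envelope, using the free product as a C*-cover of the Haagerup tensor product. Concretely, I would work with the cover $C^*_e(\V_1) *_1 C^*_e(\V_2)$ of $\V_1 \otimes_h \V_2$ furnished by Proposition \ref{prop: H-tensor embeds in free prodcut}, equipped with the unital complete isometry $\kappa(a \otimes b) = i_1(a) * i_2(b)$, where $i_k : \V_k \to C^*_e(\V_k)$ are the canonical inclusions. Since this is a C*-cover of $\V_1 \otimes_h \V_2$, the co-universal property of $C^*_e(\V_1 \otimes_h \V_2)$ produces a unique $*$-homomorphism $\Pi : C^*_e(\V_1) *_1 C^*_e(\V_2) \to C^*_e(\V_1 \otimes_h \V_2)$ with $\Pi \circ \kappa = j$. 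This single map $\Pi$ will drive both assertions.

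For the first assertion I would exploit that $\Pi$ is multiplicative. In the free product one has $\kappa(a \otimes I) = i_1(a) * I = i_1(a)$, $\kappa(I \otimes b) = I * i_2(b) = i_2(b)$, and $\kappa(a \otimes b) = i_1(a)\, i_2(b) = \kappa(a \otimes I)\,\kappa(I \otimes b)$. Applying the $*$-homomorphism $\Pi$ and using $\Pi \circ \kappa = j$ gives $j(a \otimes b) = \Pi(\kappa(a \otimes I))\,\Pi(\kappa(I \otimes b)) = j(a \otimes I)\, j(I \otimes b)$, which is exactly the claimed factorization.

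For the second assertion I would show that the canonical quotient onto $C^*_e(\V_1)$ and a suitable restriction of $\Pi$ are mutually inverse. Write $B_1 := C^*(\{j(a \otimes I) : a \in \V_1\}) \subseteq C^*_e(\V_1 \otimes_h \V_2)$. First note $a \mapsto a \otimes I$ is a unital complete isometry $\V_1 \to \V_1 \otimes_h \V_2$ (under the free-product embedding it is just $a \mapsto \phi_1(a)$), so $a \mapsto j(a \otimes I)$ is a unital complete isometry and $(B_1,\, a \mapsto j(a \otimes I))$ is a C*-cover of $\V_1$. The co-universal property of $C^*_e(\V_1)$ yields a $*$-homomorphism $q_1 : B_1 \to C^*_e(\V_1)$ with $q_1(j(a \otimes I)) = i_1(a)$. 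In the other direction, restricting $\Pi$ to the left copy of $C^*_e(\V_1)$ gives a surjective $*$-homomorphism $P_1 : C^*_e(\V_1) \to B_1$ with $P_1(i_1(a)) = j(a \otimes I)$ (its image is $C^*(\{j(a \otimes I)\}) = B_1$ since $*$-homomorphism images are closed). Then $q_1 \circ P_1$ fixes each $i_1(a)$ and $P_1 \circ q_1$ fixes each $j(a \otimes I)$, so both composites are the identity on the respective generating sets and hence on all of $C^*_e(\V_1)$ and $B_1$. Thus $q_1$ is the desired canonical isomorphism $C^*(\V_1) \cong C^*_e(\V_1)$, and the case $i = 2$ is symmetric.

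The main obstacle, I expect, is conceptual rather than computational: one must recognize that both natural maps $q_1$ and $P_1$ point \emph{into} C*-envelopes via co-universality, so that minimality of $B_1$ comes for free by composing them, instead of attempting a direct analysis of Shilov/boundary ideals of $B_1$ inside $C^*_e(\V_1 \otimes_h \V_2)$, which would be considerably more delicate. The only other points needing care are the harmless verifications that $a \mapsto a \otimes I$ is completely isometric and that the restriction of $\Pi$ surjects onto $B_1$.
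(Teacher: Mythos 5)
Your proposal is correct and follows essentially the same route as the paper: both pass to the C*-cover $C^*_e(\V_1) *_1 C^*_e(\V_2)$ of $\V_1 \otimes_h \V_2$, use the induced $*$-homomorphism onto $C^*_e(\V_1 \otimes_h \V_2)$ to transport the factorization $k(a\otimes b)=k(a\otimes I)k(I\otimes b)$, and identify $C^*(\V_i)$ with $C^*_e(\V_i)$ via the restriction of that homomorphism. Your only divergence is expository: the paper's closing ``it follows that $C^*(\V_i)\cong C^*_e(\V_i)$'' is left implicit, whereas you correctly spell it out by producing $q_1$ from the co-universal property of $C^*_e(\V_1)$ and checking that $q_1$ and the restricted homomorphism $P_1$ are mutually inverse on generators.
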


\begin{proof}
    Since $C^*_e(\V_1) *_1 C^*_e(\V_2)$ is a C*-cover, there exists a $*$-homomorphism $\rho: C^*_e(\V_1) *_1 C^*_e(\V_2) \to C^*_e(\V_1 \otimes_h \V_2)$ such that $\rho(a \otimes b) = j(a \otimes b)$. If we let $k: \V_1 \otimes_h \V_2 \to C^*_e(\V_1) *_1 C^*_e(\V_2)$ denote the embedding, we have $k(a \otimes b) = k(a \otimes I) k(I \otimes b)$. Hence $j(a \otimes b) = \rho(k(a \otimes I) k(I \otimes b)) = (\rho \circ k)(a \otimes I) (\rho \circ k)(I \otimes b) = j(a \otimes I) j(I \otimes b)$. Furthermore, the restriction of $\rho$ to $C^*_e(\V_i) \subseteq C^*_e(\V_1) *_1 C^*_e(\V_2)$ defines a surjective $*$-homomorphism from $C_e^*(\V_i)$ to $C^*(\V_i)$ mapping $k(x)$ to $j(x)$ for every $x \in V_i$. It follows that $C^*(\V_i) \cong C^*_e(\V_i)$.
\end{proof}

The above results reveal some interesting details about maximal ucc maps on $\V_1 \otimes_h \V_2$. Recall that a ucc map $\phi: \V \to \bB(\H)$ is maximal if for any ucc dilation $\psi : \V \rightarrow \bB(\K)$ of $\phi$ we must have that $\psi = \phi \oplus \phi'$ for some ucc map $\phi'$. Moreover, by work of Dritschel and McCullough \cite{DritschelMccullough05} (going back to Muhly--Solel \cite{MuhlySolel1998} and Agler \cite{Agler88}), we know that $\phi$ is maximal if and only if it has the unique extension property, i.e. $\phi$ extends uniquely among ucc maps to a $*$-homomorphism on any C*-cover for $\V$.

\begin{proposition}
    Let $\phi: \V_1 \otimes_h \V_2 \to \bB(\H)$ be a ucc map. Then $\phi$ is maximal if and only if its restrictions to $\V_1$ and $\V_2$ are both maximal.
\end{proposition}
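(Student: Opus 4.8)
The plan is to work throughout with the dilation formulation of maximality and with the C*-cover $\A := C^*_u(\V_1) *_1 C^*_u(\V_2)$ of $\V := \V_1 \otimes_h \V_2$, inside which the defining relation $j(a \otimes b) = \iota_1(a)\iota_2(b)$ expresses each elementary tensor as an honest product (here $\iota_i$ are the embeddings of $C^*_u(\V_i)$). Write $e_1,e_2$ for the units of $\V_1,\V_2$, and let $\phi_1(a) = \phi(a \otimes e_2)$, $\phi_2(b) = \phi(e_1 \otimes b)$ denote the two restrictions. The central tool is a dilation representation for an arbitrary ucc map $\phi : \V \to \bB(\H)$: extend $\phi$ to a ucp map on the operator system $\V + \V^* \subseteq \A$, then (by Arveson's extension theorem) to a ucp map $\Phi : \A \to \bB(\H)$, and Stinespring-dilate $\Phi$ to a $*$-representation $\Pi : \A \to \bB(\K)$ with $\H \subseteq \K$. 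Setting $\rho_i := \Pi \circ \iota_i : C^*_u(\V_i) \to \bB(\K)$ yields $*$-representations with $\phi(a \otimes b) = P_\H \rho_1(a)\rho_2(b)|_\H$ and $\phi_i = P_\H \rho_i|_\H$. The obstacle this device must overcome is that, by Proposition \ref{p:not-prod-cov}, a ucc map on $\V$ need not satisfy $\phi(a \otimes b) = \phi_1(a)\phi_2(b)$; the real content of the proposition is that maximality repairs this defect.

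For the converse direction, assume $\phi_1$ and $\phi_2$ are maximal and let $\psi : \V \to \bB(\K)$ be any ucc dilation of $\phi$. Its restrictions $\psi_1,\psi_2$ dilate $\phi_1,\phi_2$. Applying the dilation representation to $\psi$ produces $*$-representations $\sigma_i : C^*_u(\V_i) \to \bB(\K')$ (with $\K \subseteq \K'$) satisfying $\psi(a \otimes b) = P_\K \sigma_1(a)\sigma_2(b)|_\K$ and $\phi_i = P_\H \sigma_i|_\H$, so each $\sigma_i|_{\V_i}$ is itself a dilation of $\phi_i$. Maximality of $\phi_i$ forces $\H$ to reduce $\sigma_i(\V_i)$, hence to reduce the $*$-algebra $\sigma_i(C^*_u(\V_i))$ it generates. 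I would then check, for $\xi \in \H$, that $\sigma_2(b)\xi$ and in turn $\sigma_1(a)\sigma_2(b)\xi$ remain in $\H$ (and symmetrically for the adjoints, using that each $\sigma_i$ is a $*$-homomorphism), so that $P_\K$ acts trivially and $\H$ reduces $\psi(a \otimes b)$ with $\psi(a \otimes b)|_\H = \phi(a \otimes b)$. Since these elementary tensors span $\V$, the subspace $\H$ reduces $\psi(\V)$, giving $\psi = \phi \oplus \phi'$; thus $\phi$ is maximal.

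For the forward direction, assume $\phi$ is maximal. First I would record that $\phi$ respects the product: the map $a \otimes b \mapsto \rho_1(a)\rho_2(b)$ (the restriction of the $*$-representation $\Pi$ to $\V$) is a ucc dilation of $\phi$, so maximality forces $\H$ to reduce each $\rho_i$, whence $\phi(a \otimes b) = \phi_1(a)\phi_2(b)$ with $\phi_i = \rho_i|_\H$. To see $\phi_1$ is maximal, take a ucc dilation $\psi_1 : \V_1 \to \bB(\L)$ of $\phi_1$ with $\H \subseteq \L$ and manufacture a dilation of $\phi$ from it: fix any state $\mu$ on $\V_2$, set $\tilde{\psi}_2 := \phi_2 \oplus \mu(\cdot) I_{\L \ominus \H}$ (a ucc dilation of $\phi_2$ to $\L$), and let $\Psi := \psi_1 \otimes_h \tilde{\psi}_2$, which is ucc by the proposition proved earlier that the product of two ucc maps is ucc on the Haagerup tensor product. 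Since $\tilde{\psi}_2$ leaves $\H$ invariant with $\tilde{\psi}_2|_\H = \phi_2$, a direct computation gives $P_\H \Psi(a \otimes b)|_\H = \phi_1(a)\phi_2(b) = \phi(a \otimes b)$, so $\Psi$ is a ucc dilation of $\phi$. Maximality of $\phi$ makes $\H$ reduce $\Psi(\V)$; specializing to $b = e_2$ gives $\Psi(a \otimes e_2) = \psi_1(a)$, so $\H$ reduces $\psi_1(\V_1)$ and $\phi_1$ is maximal. The argument for $\phi_2$ is symmetric.

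I expect the main difficulty to be the bookkeeping in the dilation representation—confirming that the compressions behave as stated and that the reducing-subspace arguments survive the passage among the several Hilbert spaces $\H \subseteq \K \subseteq \K'$ (and $\H \subseteq \L$)—together with the conceptual point flagged above: neither implication may proceed by naively identifying $\phi$ with the product of its restrictions, and it is only after maximality has been invoked to produce a reducing subspace that the genuine product structure of the free-product cover $\A$ can be brought to bear.
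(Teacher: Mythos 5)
Your proof is correct, and it takes a genuinely different route from the paper's. The paper does not argue with dilations at all: it invokes the Dritschel--McCullough equivalence of maximality with the unique extension property and works in the C*-envelope free product $C^*_e(\V_1) *_1 C^*_e(\V_2) = C^*_e(\V_1 \otimes_h \V_2)$. In its forward direction, the product identity $\phi(a \otimes b) = \phi_1(a)\phi_2(b)$ comes from the unique multiplicative extension $\rho$, and Boca's extension theorem is then used to glue arbitrary ucc extensions $\tau_i$ of the $\phi_i$ into a ucc extension $\tau$ of $\phi$; uniqueness forces $\tau = \rho$, so the restrictions $\rho_i$ are the only ucc extensions of the $\phi_i$, giving their unique extension property. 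In its backward direction, an arbitrary ucc extension $\rho$ of $\phi$ to the envelope restricts multiplicatively to each $C^*(\V_i)$, and a Stinespring dilation $\hat{\rho}$ combined with triviality of dilations of the maximal $\phi_i$ and the generation of the envelope yields $\hat{\rho} = \rho \oplus \tau$, hence multiplicativity of $\rho$. You instead stay with the dilation formulation throughout and work in the universal cover $C^*_u(\V_1) *_1 C^*_u(\V_2)$ via Arveson extension plus Stinespring. Your converse is a direct reducing-subspace argument: maximality of the $\phi_i$ makes $\H$ reduce $\sigma_i(\V_i)$, hence the generated $*$-algebras, hence the products $\sigma_1(a)\sigma_2(b)$, so $\psi = \phi \oplus \phi'$. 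Your forward direction replaces Boca's theorem by the elementary padding trick $\tilde{\psi}_2 = \phi_2 \oplus \mu(\cdot)I_{\mathcal{L} \ominus \H}$ together with the paper's earlier proposition that $\psi_1 \otimes_h \tilde{\psi}_2$ is ucc, turning a dilation of $\phi_1$ into a dilation of $\phi$ and specializing at $b = e_2$; all the compression computations you flag do check out (for the symmetric $\phi_2$ case the compression lands on the left, via $P_\H \tilde{\psi}_1(a) = \phi_1(a) P_\H$, which works precisely because the padded map is block diagonal). What your approach buys is self-containedness at the level of dilations, avoiding Boca's theorem and needing the Dritschel--McCullough equivalence only as motivation rather than as a tool; what the paper's approach buys is multiplicativity statements formulated directly at the level of the C*-envelope, which is the form consumed by the subsequent corollary identifying $C^*_e(\V_1 \otimes_h \V_2)$ with $C^*_e(\V_1) *_1 C^*_e(\V_2)$. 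Two minor points to make explicit in a write-up: elementary tensors only span a dense subspace of $\V_1 \otimes_h \V_2$, so the direct-sum decomposition of $\psi$ extends to the closure by continuity; and the existence of a state $\mu$ on the unital operator space $\V_2$ (a unital contractive functional, automatically completely contractive) should be noted, e.g.\ via Hahn--Banach or a vector state through a unital complete isometry.
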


\begin{proof}
    First suppose that $\phi$ is maximal, and let $\phi_i$ be the restriction of $\phi$ to $\V_i$ for $i=1,2$. We will show that $\phi_i$ is maximal for each $i=1,2$. Since $\phi$ is maximal, it extends to a $*$-homomorphism $\rho$ on $C^*_e(\V_1) *_1 C^*_e(\V_2)$, and $\rho$ is unique among ucc extensions of $\phi$. Then $\rho$ decomposes uniquely as $\rho = \rho_1 * \rho_2$ for $*$-homomorphisms $\rho_i: C^*_e(\V_i) \to \bB(\H)$, and $\rho_i$ extends the restriction $\phi_i$ of $\phi$ to $\V_i$ to $*$-representations. In particular, $\phi$ satisfies the partial multiplication property $\phi(v_1 * v_2) = \phi_1(v_1) \phi_2(v_2)$. Now, if $\tau_i$ was a ucc extension of $\phi_i$ to $C^*_e(\V_i)$ for each $i=1,2$, then by Boca's extension theorem \cite[Theorem 3.1]{BOCA1991251} there exists a ucc map $\tau$ on $C^*_e(\V_1) *_1 C^*_e(\V_2)$ whose restriction to $\V_i$ is $\phi_i$, and satisfies the partial multiplication property $\tau(v_1 * v_2) = \phi_1(v_1) \phi_2(v_2) = \rho(v_1 * v_2) = \phi(v_1\otimes v_2)$. Hence, we see that $\tau$ is an extension of $\phi$, and therefore $\tau = \rho$ and is multiplicative. Thus, the restrictions $\rho_i$ of $\rho$ to $C^*_e(\V_i)$ are also multiplicative, and we deduce that $\phi_i$ are each maximal.

    On the other hand, suppose the restrictions $\phi_i$ of $\phi$ to $\V_i$ have the unique extension property. Then these maps extend to $*$-homomorphisms on $C^*(\V_i) \subseteq C^*_e(\V_1 \otimes_h \V_2)$ which are unique among ucc extensions with the same domain. Now let $\rho$ be any ucc extension of $\phi$ to $C^*_e(\V_1 \otimes_h \V_2)$. Then the restriction of $\rho$ to $C^*(\V_i)$ is a ucc extension of $\phi_i$, and is therefore a $*$-homomorphism extending $\phi_i$. Let $\hat{\rho}: C^*_e(\V_1 \otimes_h \V_2) \to \bB(\K)$ be the Stinespring dilation of $\rho$, which is a unital $*$-homomorphism. If $\hat{\rho}_i$ are the restrictions of $\hat{\rho}$ to $C^*(\V_i)$ for $i=1,2$, then $\hat{\rho}_i|_{\V_i}$ is a dilation of the maximal map $\phi_i$. But this implies that $\hat{\rho}_i|_{\V_i}$ is a trivial dilation, i.e. $\hat{\rho}_i|_{\V_i} = \phi_i \oplus \psi_i$ for some ucc map $\psi_i: \V_i \to \bB(\K \ominus \H)$. Since $\hat{\rho}$ is multiplicative, and since $C^*_e(\V_1 \otimes_h \V_2)$ is the C*-algebra generated by $C^*(\V_1)$ and $C^*(\V_2)$, it follows that $\hat{\rho} = \rho \oplus \tau$ for some $*$-homomorphism $\tau$, so that $\rho$ is multiplicative. Thus, $\phi$ has the unique extension property, and is therefore maximal.
\end{proof}

\begin{corollary}
    Let $\V_1$ and $\V_2$ be separable unital operator spaces. Then 
    $$C^*_e(\V_1 \otimes_h \V_2) \cong C^*_e(\V_1) *_1 C^*_e(\V_2).$$
\end{corollary}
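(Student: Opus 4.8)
The plan is to exhibit the amalgamated free product $C^*_e(\V_1) *_1 C^*_e(\V_2)$ as the C*-algebra generated by a single \emph{maximal} completely isometric representation of $\V_1 \otimes_h \V_2$, and then to invoke the Dritschel--McCullough theorem, which identifies the C*-algebra generated by any maximal completely isometric representation with the C*-envelope. Concretely, applying Proposition~\ref{prop: H-tensor embeds in free prodcut} with the covers $C^*(\V_i) = C^*_e(\V_i)$ gives a canonical unital complete isometry $k : \V_1 \otimes_h \V_2 \to C^*_e(\V_1) *_1 C^*_e(\V_2)$ with $C^*(k(\V_1 \otimes_h \V_2)) = C^*_e(\V_1) *_1 C^*_e(\V_2)$. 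I would then fix a faithful unital $*$-representation $\sigma$ of $C^*_e(\V_1) *_1 C^*_e(\V_2)$ on a Hilbert space $\H$ and set $K := \sigma \circ k$. Since $k$ is completely isometric and $\sigma$ is faithful, $K$ is a completely isometric representation of $\V_1 \otimes_h \V_2$, and $C^*(K(\V_1 \otimes_h \V_2)) \cong C^*_e(\V_1) *_1 C^*_e(\V_2)$.

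The first real step is to show that $K$ is maximal. By the preceding proposition (maximality of a ucc map on $\V_1 \otimes_h \V_2$ is equivalent to maximality of both of its restrictions to $\V_1$ and $\V_2$), it suffices to check that $K|_{\V_i}$ is maximal for $i=1,2$. But $K|_{\V_i}$ factors as the canonical inclusion $\V_i \hookrightarrow C^*_e(\V_i)$ followed by the faithful $*$-representation $\sigma|_{C^*_e(\V_i)}$ of $C^*_e(\V_i)$, using that each factor embeds isometrically into the amalgamated free product. Thus $K|_{\V_i}$ extends to a $*$-representation of the C*-envelope $C^*_e(\V_i)$, and by the Dritschel--McCullough characterization quoted in the excerpt — a ucc map is maximal if and only if it has the unique extension property — this forces $K|_{\V_i}$ to be maximal.

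With both restrictions maximal, the cited proposition yields that $K$ itself is maximal. Since $K$ is also completely isometric, the Dritschel--McCullough realization of the C*-envelope provides a canonical $*$-isomorphism $C^*(K(\V_1 \otimes_h \V_2)) \cong C^*_e(\V_1 \otimes_h \V_2)$ that fixes $\V_1 \otimes_h \V_2$. Composing this with the identification $C^*(K(\V_1 \otimes_h \V_2)) \cong C^*_e(\V_1) *_1 C^*_e(\V_2)$ from the first paragraph produces the desired isomorphism. Note that this uniqueness statement delivers both surjectivity and injectivity at once, which is why I prefer it to directly massaging the surjection $C^*_e(\V_1) *_1 C^*_e(\V_2) \to C^*_e(\V_1 \otimes_h \V_2)$ coming from the co-universal property of the envelope.

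The step I expect to be the main obstacle is verifying that the restriction of a faithful $*$-representation of $C^*_e(\V_i)$ to $\V_i$ is maximal — i.e. that the canonical inclusion of a unital operator space into its C*-envelope realizes a maximal representation. Although this is morally the forward direction of the unique extension property, making it fully rigorous requires the Dritschel--McCullough dilation machinery: one passes to a maximal dilation of $\sigma|_{C^*_e(\V_i)} \circ (\V_i \hookrightarrow C^*_e(\V_i))$, extends it to a faithful $*$-representation of $C^*_e(\V_i)$, and then runs a reducing-subspace argument to conclude the dilation was trivial. It is precisely at the maximal-dilation step that the separability hypothesis is invoked, since that is what guarantees the existence of maximal dilations. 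Once this lemma is secured, the remainder of the argument is bookkeeping assembled from the results already established in the excerpt.
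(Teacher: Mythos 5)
Your high-level architecture (realize the free product as the C*-algebra generated by a single maximal completely isometric representation of $\V_1 \otimes_h \V_2$, then invoke Dritschel--McCullough) is reasonable, but the pivotal step fails: it is \emph{not} true that the restriction to $\V_i$ of a faithful $*$-representation of $C^*_e(\V_i)$ is maximal. What you actually verify is that $K|_{\V_i}$ admits \emph{some} extension to a $*$-representation of $C^*_e(\V_i)$ --- but every restriction of a representation of the envelope trivially has that property. Maximality is equivalent to the \emph{unique} extension property: every ucc extension of $K|_{\V_i}$ to the cover must be that representation. These are far apart. For a concrete counterexample, let $X$ be a metrizable Choquet simplex whose extreme points are dense but not closed (e.g.\ the Poulsen simplex), and let $\V = A(X) \subseteq C(X)$ be the affine functions. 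Since the Shilov boundary is $\overline{\partial_e X} = X$, we have $C^*_e(\V) = C(X)$. Evaluation $\delta_{x_0}$ at a non-extreme point $x_0$ restricts to a state on $\V$ with many extensions to $C(X)$ (any measure with barycenter $x_0$), so $\delta_{x_0}|_{\V}$ is not maximal; and if $\sigma = \bigoplus_{x \in D} \delta_x$ is a faithful representation built over a dense set $D \ni x_0$, then replacing the $x_0$-summand's extension by a non-point-mass measure produces a non-multiplicative ucp extension of $\sigma|_{\V}$, so $\sigma|_{\V}$ is not maximal either. Consequently the ``main obstacle'' you flag cannot be overcome by a maximal-dilation-plus-reducing-subspace argument: the lemma you are trying to prove is false as stated. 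There is also a secondary structural problem: even if you could manufacture representations $\pi_i'$ of $C^*_e(\V_i)$ with $\pi_i'|_{\V_i}$ maximal, you would still need the \emph{single} pair $\pi_1' * \pi_2'$ to be faithful on the full amalgamated free product, and faithfulness of each factor does not give this, since the free-product norm is a supremum over all pairs of representations.

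The paper's proof handles exactly this difficulty, and in a different way than you anticipate. It starts from the canonical surjection $q: C^*_e(\V_1) *_1 C^*_e(\V_2) \to C^*_e(\V_1 \otimes_h \V_2)$ given by the co-universal property and proves $q$ is \emph{isometric} by a norm comparison: the free-product norm of a polynomial is a supremum of $\|(\pi_1 * \pi_2)(p)\|$ over all pairs, and one must show the supremum is unchanged when restricted to pairs with $\pi_i|_{\V_i}$ maximal. To do this, the paper fixes a faithful representation $\pi = \pi_1 * \pi_2$ of \emph{infinite multiplicity} on a separable Hilbert space and approximates each $\pi_i$ in the point-norm topology by $\Ad_{U_n} \circ \pi_i'$, where $\pi_i'$ is some injective, infinite-multiplicity representation with $\pi_i'|_{\V_i}$ maximal (which exists by Dritschel--McCullough); since both representations have injective images in the Calkin algebra, Voiculescu's theorem supplies the approximating unitaries. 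This also corrects your attribution of the separability hypothesis: maximal dilations exist without any separability assumption (the Dritschel--McCullough construction does not need it); separability is invoked so that Voiculescu's theorem applies and so that the free-product norm can be computed over representations on a fixed separable space. In short, your first and last paragraphs match the paper's framework, but the core of your argument rests on a false claim, and the genuine content of the proof is the Voiculescu approximation step that your proposal does not contain.
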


\begin{proof}
    To prove this result we compare universal properties. First note that by the co-universal property of the C*-envelope, since $(C^*_e(\V_1) *_1 C^*_e(\V_2),j)$ is a C*-cover for $\V_1 \otimes_h \V_2$, there is a surjective $*$-homomorphism $q : C^*_e(\V_1) *_1 C^*_e(\V_2) \rightarrow C^*_e(\V_1 \otimes_h \V_2)$ such that the map $q\circ j = \kappa$ is the embedding of $\V_1\otimes_h \V_2$ into $C^*_e(\V_1 \otimes_h \V_2)$.

    Recall that the norm of a polynomial $p\in C^*_e(\V_1) *_1 C^*_e(\V_2)$ is the supremum of norms $\|(\pi_1 * \pi_2)(p)\|$ ranging over all pairs of $*$-homorphisms $\pi_i : C^*_e(\V_i) \rightarrow \bB(\H)$ where $\H$ is a separable Hilbert space. By work of Dritschel-McCullough \cite{DritschelMccullough05} (see also \cite{Arveson08}) we know that $C^*_e(\V_1 \otimes_h \V_2)$ is the universal C*-algebra with respect to ucc maps $\phi : \V_1 \otimes_h \V_2 \rightarrow \bB(\H)$ which are maximal, so that by the previous proposition $\phi = \phi_1 \otimes \phi_2$ such that $\phi_i$ is maximal for $i=1,2$. Thus, to show that $q$ is injective, it will suffice to show that the norm of $p$ is equal to the supremum of norms of norms $\|(\pi_1 * \pi_2)(p)\|$ ranging over all pairs of $*$-homorphisms $\pi_i : C^*_e(\V_i) \rightarrow \bB(\H)$ such that $\pi_i|_{\V_i}$ is maximal for $i=1,2$.
    
    Take an injective $*$-representation $\pi = \pi_1 * \pi_2$ of the free product $C^*_e(\V_1) *_1 C^*_e(\V_2)$ of infinite multiplicity, into separable Hilbert space. It will suffice to show that $\pi_1$ and $\pi_2$ can be approximated in the point-norm topology by another pair $\pi_1'$ and $\pi_2'$ such that $\pi_i'|_{\V_i}$ is maximal. Indeed, suppose for every $\epsilon>0$, finite $\F_1 \subset C^*_e(\V_1)$ and finite $\F_2 \subset C^*_e(\V_2)$ there exist $\pi_i'$ such that $\|\pi_i(a_i) - \pi_i'(a_i) \| < \epsilon$ for $a_i \in \F_i$. Then  
    for every finite family of polynomials $\F  \subseteq C^*_e(\V_1) *_1 C^*_e(\V_2)$ and every $\delta>0$ we may choose $\epsilon$ depending on $\delta$ and the polynomials in $\F$ so that there exist $\pi_i'$ for $i=1,2$ as above satisfying $\| (\pi_1* \pi_2)(p) - (\pi_1' * \pi_2')(p)\| < \delta$ for every $p\in \F$. This can be achieved by letting $\F_i$ be the factors of monomials comprising the polynomials in $\F$ that belong to $C^*_e(\V_i)$ for each $i=1,2$ and applying standard polynomial analysis arguments.

    Every injective $*$-representation $\pi : C^*_e(\V_i) \rightarrow \bB(\H)$ of infinite multiplicity into separable $\H$ for some unital operator space $\V_i$ can be approximated in the point-norm topology by a $*$-representation $\pi' : C^*_e(\V_i) \rightarrow \bB(\H)$ such that $\pi'|_{\V_i}$ is maximal. Indeed, by Dritschel--McCullough there is some injective, infinite multiplicity $*$-representation $\pi' : C^*_e(\V_i) \rightarrow \bB(\K)$ such that $\pi'|_{\V_i}$ is maximal. Since all representations considered are of infinite multiplicity, their quotients into the Calkin algebra are still injective, and therefore by Voiculescu's theorem (see \cite[Corollory II.5.6]{DavidsonCstarBook}) there exists a sequence $U_n : \H \rightarrow \K$ such that $\Ad_{U_n} \circ \pi'$ converges in the point norm to $\pi$.
\end{proof}

\color{black}

\section{Products of unital operator spaces} \label{s:product-structure}

In the previous section, we saw that Haagerup tensor product of two unital operator spaces $(\S,e)$ and $(\T,f)$ is itself a unital operator space. Moreover, there exists a complete isometry $\pi: \S \otimes_h \T \to \bB(\H)$ such that $\pi(s \otimes t) = \pi(s) \pi(t)$ for all $s \in \S$ and $t \in \T$. It follows that $\V = \S \otimes_h \T$ admits a partially-defined multiplication $m: \D \to \V$, where \[ \D = (\bC e \times \V) \cup (\V \times \bC f) \cup (\S \times \T), \] defined by $m(\lambda e \otimes f, x) = m(x, \lambda e \otimes f) = \lambda x$ and $m(s,t) = s \otimes t$ for all $\lambda \in \bC$, $x \in \V$, $s \in \S$, and $t \in \T$. The map $\pi$ described above satisfies $\pi(m(a,b)) = \pi(a)\pi(b)$ for all $(a,b) \in \D$.

In this section, we wish to abstractly characterize this situation. Specifically, whenever $\V$ is a untial operator space, $\D \subseteq \V \times \V$, and $m: \D \to \V$ is a map, we wish to characterize exactly when there exists a unital complete isometry $\pi: \V \to \bB(\H)$ such that $\pi(m(a,b)) = \pi(a)\pi(b)$ for all $(a,b) \in \D$.

Let $\V$ be a unital operator space with unit $e \in \V$. We call a subset $\D \subseteq \V \times \V$ a \textit{domain} if it satisfies the following conditions:
\begin{enumerate}
    \item $\mathbb{C}e \times \V \subseteq \D$ and $\V \times \mathbb{C} e \subseteq \D$.
    \item For each $x \in \V$, there exists a unital subspace $\R_x \subseteq \V$ such that $\bC x \times \R_x \subseteq \D$.
    \item For each $y \in \V$, there exists a unital subspace $\L_y \subseteq \V$ such that $\L_y \times \bC y \subseteq \D$.
\end{enumerate}

Given a domain $\D$, a map $m: \D \to \V$ is called \textit{bilinear} if for every $x,y,z \in \V$ and $\lambda \in \bC$, $y,z \in \L_x$ implies $m(y+\lambda z,x) = m(y,x) + \lambda m(z,x)$ and $y,z \in \R_x$ implies $m(x,y + \lambda z) = m(x,y) + \lambda m(x,z)$. A pair of matrices $A = (a_{ij}) \in M_{n,k}(\V)$ and $B = (b_{ij}) \in M_{k,m}(\V)$ are called a \textit{valid pair} provided that $(a_{x,y}, b_{y,z}) \in \D$ for all indices $x,y,z$. Given a valid pair $(A,B)$, we define
\[ A \odot_m B := ( \sum_y m(a_{x,y}, b_{y,z}))_{x,z}. \]
A bilinear map $m:\D \to \V$ is \textit{completely contractive} provided that $\|A \odot_m B \| \leq \|A\| \|B\|$ for all valid pairs $A$ and $B$. It is called \textit{unital} if $m(x,e)=m(e,x)=x$ for all $x \in \V$. A \textit{partial product} on $\V$ consists of a pair $(\D,m)$, where $\D$ is a domain and $m$ is a unital bilinear map.

To illustrate the above definitions, we consider the following natural constructions and examples which will make appearances throughout the paper.

\begin{construction}[Trivial Product] \label{const: trivial product}
    \emph{Let $\V$ be a unital operator space with unit $e$. Since multiplication by the identity leaves $\V$ invariant, we may take $\D = (\bC e \times \V) \cup (\V \times \bC e)$ and define $m(\lambda e,x) = m(x, \lambda e) = \lambda$ for all $\lambda \in \bC$. By considering unital complete isometries of $\V$ into $\bB(\H)$, we see that $m$ is a completely contractive partial product.}
\end{construction}

\begin{construction}[Unital operator algebra]
    \emph{Let $\A$ be a (possibly non-self adjoint) unital operator algebra. Then we let $\D = \A \times \A$ and $m(x,y) = x \cdot y$ be the given product. Then $m$ is a completely contractive partial product.}
\end{construction}

\begin{construction}[Haagerup tensor product] \label{const: H tensor}
    \emph{Let $\S$ and $\T$ be unital operator spaces with units $I_{\S}$ and $I_{\T}$ respectively, and let $\V = \S \otimes_h \T$. Then the unital embedding $\pi$ of $\V$ into $C^*_u(\S) *_1 C^*_u(\T)$ satisfies $\pi(s \otimes I_{\T}) \pi(I_{\S} \otimes t) = s \cdot t = \pi(s \otimes t)$ for all $s \in \S$ and $t \in \T$. Identifying $\S$ with $\S \otimes \bC I_{\T}$, $\T$ with $\bC I_{\S} \otimes \T$, and $\bC$ with $\bC(I_{\S}\otimes I_{\T})$, we may set $\D = (\mathbb{C} \times \V) \cup (\V \times \mathbb{C}) \cup (\S \times \T)$ and define $m(s,t) = s \otimes t$ for all $s \in \S$ and $t \in \T$. By Proposition \ref{prop: H-tensor embeds in free prodcut}, $m$ is a completely contractive partial product.}
\end{construction}

\begin{construction}[Commuting tensor product] \label{const: c tensor}
    \emph{Let $\S$ and $\T$ be operator systems with units $I_{\S}$ and $I_{\T}$ respectively, and let $\V = \S \otimes_c \T$. Then the unital embedding $\pi$ of $\V$ into $C^*_u(\S) \otimes_{max} C^*_u(\T)$ satisfies $\pi(s \otimes I_{\T}) \pi(I_{\S} \otimes t) = \pi(I_{\S} \otimes t) \pi(s \otimes I_{\T}) = s \otimes t = \pi(s \otimes t)$ for all $s \in \S$ and $t \in \T$. Identifying $\S$ with $\S \otimes I_{\T}$, $\T$ with $I_{\S} \otimes \T$, and and $\bC$ with $\bC(I_{\S}\otimes I_{\T})$, we may set $\D = (\bC \times \V) \cup (\V \times \bC) \cup (\S \times \T) \cup (\T \times \S)$ and $m(s,t) = m(t,s) = s \otimes t$ for all $s \in \S$ and $t \in \T$. Then $m$ is a completely contractive partial product.}
\end{construction}

\begin{example}[Free unitary system] \label{ex: free unitary}
    \emph{Let $n \in \mathbb{N}$ with $n \geq 2$ and let $\S_n$ denote the linear span of the set $\{I, u_1, \dots, u_n, u_1^*, \dots, u_n^*\} \subseteq C^*(\mathbb{F}_n)$, where $C^*(\mathbb{F}_n)$ denotes the universal C*-algebra of the free group on $n$ generators $\mathbb{F}_n = \langle u_1, \dots, u_n \rangle$. In this operator system, we have the relation $u_i u_i^* = u_i^* u_i = I$ for each $i =1,\dots,n$. Since these are the only non-trivial products which leave $\S_n$ invariant, it is easy to see that $\R_{u_i} =\L_{u_i} = \text{span} \{e, u_i^*\}$ and $\R_{u_i^*} = \L_{u_i^*} = \text{span} \{e, u_i\}$ for each $i=1,\dots,n$. In this case, we may take \[ \D = (\S_n \times \bC) \cup (\bC \times \S_n) \cup \left( \cup_{i=1}^n (\L_{u_i} \times \R_{u_i^*}) \cup (\L_{u_i^*} \times \R_{u_i}) \right) \]
    and define a completely contractive partial product $m$ such that $m(u_i,u_i^*)=m(u_i^*,u_i)=I$ for all $i=1,\dots,n$.}
\end{example}

\begin{example}[Cuntz system] \label{ex: Cuntz system}
    \emph{Let $s_1, s_2, \dots, s_n$ be the generators of the Cuntz algebra $\O_n$. For each $i=1,2,\dots,n$, let $p_i = s_i s_i^*$. Let $\V_n = \text{span} \{I, s_1, \dots, s_n, s_1^*, \dots, s_n^*, p_1, \dots, p_n\}$. Then $\V_n$ is an operator system. By \cite{zhangPaulsen16cuntz}, $C^*_e(\V_n) \cong \O_n$. We may define a partial multiplication on $\V_n$ by letting $m(s_i^*, s_i) = e$ and $m(s_i, s_i^*) = p_i$ for each $i=1,2,\dots,n$, and extending $m$ to a partial product on the domain
    \[ \D = (\V_n \times \mathbb{C}) \cup (\mathbb{C} \times \V_n) \cup \left( \cup_{i=1}^n (\L_{s_i} \times \R_{s_i^*}) \cup (\L_{s_i^*} \times \R_{s_i}) \right) \] where $\R_{s_i} = \L_{s_i} = \text{span} \{I, s_i\}$ and $\R_{s_i^*} = \L_{s_i^*} = \text{span} \{I, s_i\}$. Then $m$ is a completely contractive partial product.}
\end{example}

At the end of this section, we will provide examples of partial products which are not completely contractive. To justify these, we first need to prove some results about completely contractive partial products.

\subsection{Unital operator spaces} \label{subsec: unital op sp}

Before considering the general case, we will focus on the special case of a \textit{trivial partial product}, i.e. we consider the situation where $\D = (\mathbb{C}e \times \V) \cup (\V \times \mathbb{C}e)$ and $m$ is defined by setting $m(\lambda e, x) = m(x, \lambda e) = x$ for all $\lambda \in \mathbb{C}$ and $x \in \V$. For a trivial partial product, a pair of matrices $A =(a_{ij})$ and $B=(b_{ij})$ over $\V$ is a valid pair if and only if either $a_{ik}$ or $b_{kj}$ is a multiple of the unit $e$ for all indices $i,j$, and $k$. We recall the following result from \cite[Theorem 2.3]{blecher2011metric}.

\begin{theorem}[Blecher-Neal] \label{thm: Blecher-Neal unit}
    Let $\V$ be an operator space and $e \in \V$ a unit vector. Then there exists a complete isometry $\pi: \V \to \bB(\H)$ such that $\pi(e)=I$ if and only if for every $x \in M_n(\V)$,
    \[ \| \begin{pmatrix} I_n \otimes e & x \end{pmatrix} \| = \|\begin{pmatrix} I_n \otimes e \\ x \end{pmatrix} \| = \sqrt{1 + \|x\|^2}. \]
    Moreover, it suffices for this equation to hold whenever $\|x\|=1$.
\end{theorem}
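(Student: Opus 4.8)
The plan is to separate the two implications: necessity and the ``moreover'' clause are direct Hilbert-space computations, while sufficiency is obtained through a dilation that turns $e$ into a unitary. For necessity, suppose $\pi\colon\V\to\bB(\H)$ is a complete isometry with $\pi(e)=I$. Writing $X=\pi^{(n)}(x)$, so that $\|X\|=\|x\|$ and $\pi^{(n)}(I_n\otimes e)=I$, I would compute
\[ \left\| \begin{pmatrix} I_n\otimes e & x\end{pmatrix} \right\|^2 = \|I+XX^*\| = 1+\|XX^*\| = 1+\|x\|^2, \]
where the middle equality uses that $XX^*\ge 0$, so that adjoining the identity merely shifts the spectrum upward by $1$; the column identity is the same computation with $X^*X$ in place of $XX^*$.

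I would treat the ``moreover'' clause as a purely intrinsic statement that reduces the full hypothesis to its norm-one case. Fix any complete isometry $\rho\colon\V\to\bB(\H)$ (Theorem~\ref{t:RCE}) and put $E=\rho(e)$, so $\|E\|=1$. Since $I_n\otimes EE^*$ and $XX^*$ are positive operators of norms $1$ and $\|x\|^2$, the inequality $\left\| \begin{pmatrix} I_n\otimes e & x\end{pmatrix} \right\|\le\sqrt{1+\|x\|^2}$ holds automatically, and likewise for the column. For the reverse inequality, given $x$ with $\|x\|=r>0$ set $\hat x=x/r$; the assumed norm-one identity gives $\|I_n\otimes EE^*+\hat X\hat X^*\|=2$ with $0\le I_n\otimes EE^*,\ \hat X\hat X^*\le I$, which forces unit vectors $\xi_k$ with $\langle (I_n\otimes EE^*)\xi_k,\xi_k\rangle\to 1$ and $\langle \hat X\hat X^*\xi_k,\xi_k\rangle\to 1$ simultaneously. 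Evaluating $I_n\otimes EE^*+XX^*=I_n\otimes EE^*+r^2\hat X\hat X^*$ on $\xi_k$ then yields the matching lower bound $1+r^2$. Hence we may assume the full-strength hypothesis throughout the remaining direction.

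For sufficiency, the central reduction is that it suffices to produce a complete isometry $\pi\colon\V\to\bB(\K)$ for which $U:=\pi(e)$ is a \emph{unitary}: left multiplication by $U^*$ is completely isometric on $\bB(\K)$, so $x\mapsto U^*\pi(x)$ is a complete isometry sending $e$ to $U^*U=I$, which is exactly the assertion that $\V$ is unital with unit $e$. To build such a $\pi$ I would start from $\rho$ above, with $E=\rho(e)$ a contraction, and dilate: I expect to use the column identity $\|E^*E+X^*X\|=1+\|X\|^2$ to extend $\rho$ completely isometrically to a larger space on which $e$ is represented by an isometry, and then the row identity $\|EE^*+XX^*\|=1+\|X\|^2$ to coextend so that this isometry becomes a coisometry, and hence a unitary, with the two identities guaranteeing that the relevant compressions recover the original matrix norms so that complete isometry persists at each stage. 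The main obstacle is precisely this coordinated two-sided dilation: one cannot simply dilate the single contraction $E$ to a unitary by Sz.-Nagy, but must enlarge the representation of \emph{all} of $\V$ so that $e$ becomes simultaneously an isometry and a coisometry while every matrix norm $\|x\|_n$ is preserved. It is exactly here that both norm identities are needed in tandem — the column identity governing the extension half and the row identity the coextension half — and verifying that a single enlargement meets both requirements at once is the technical heart of the argument.
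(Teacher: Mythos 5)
Your necessity computation and your reduction of the full identity to the norm-one case are both correct. Writing $X=\pi^{(n)}(x)$, the equality $\left\|\begin{pmatrix} I & X\end{pmatrix}\right\|^2=\|I+XX^*\|=1+\|XX^*\|$ for positive $XX^*$ is exactly the "easily checked" forward direction the paper alludes to, and your normalization argument — choosing unit vectors $\xi_k$ that simultaneously nearly norm the two positive summands $I_n\otimes EE^*$ and $\hat X\hat X^*$, then rescaling to get the lower bound $1+r^2$ — is a valid and rather clean way to show the norm-one hypothesis already implies the general identity. (For completeness one should also dispose of $x=0$, which is trivial since $\|I_n\otimes e\|=\|e\|=1$.)

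The sufficiency direction, however, contains a genuine gap, and it is the entire substance of the theorem. Your reduction is fine: if some complete isometry $\pi$ sends $e$ to a unitary $U$, then $x\mapsto U^*\pi(x)$ is a unital complete isometry. But the proposed two-stage dilation — use the column identity to enlarge the representation so that $E=\rho(e)$ becomes an isometry, then use the row identity to coextend so it becomes a coisometry, with all matrix norms of $\V$ preserved at each stage — is asserted rather than constructed, as you yourself concede. No lemma is offered producing such an enlargement: Sz.-Nagy-type dilations act on the single contraction $E$, not coherently on all of $\V$, there is no a priori reason the enlarged maps remain completely isometric on every $M_n(\V)$, and even granting the first stage, nothing prevents the coextension from destroying the isometry property achieved in the extension. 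This is precisely where the actual argument requires machinery the sketch does not supply: the paper does not reprove this direction but cites Blecher--Neal \cite{blecher2011metric}, remarking that it "makes use of the theory of multiplier algebras" — that is, the Blecher--Effros--Zarikian one-sided multiplier theory (recalled in the paper as Theorem \ref{thm: BEZ}), applied in the injective/ternary envelope, where the two norm identities force $j(e)$ to be a unitary of the triple envelope; multiplying by $j(e)^*$ inside that envelope is then the rigorous version of your $U^*\pi(\cdot)$ step. As written, your converse direction is a plausible plan whose technical heart — the coordinated, norm-preserving two-sided dilation — is missing, so the theorem remains unproved in your proposal.
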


The forward direction of Theorem \ref{thm: Blecher-Neal unit} is easily checked by considering a unital complete isometry into $\bB(\H)$. The other direction makes use of the theory of multiplier algebras, which we will consider in the next subsection. We remark that 
\[ \| \begin{pmatrix} I_n \otimes e & x \end{pmatrix} \| = \|\begin{pmatrix} I_n \otimes e \\ x \end{pmatrix} \| \leq \sqrt{\|e\|^2 + \|x\|^2} \]
is valid in any operator space (and checked by considering complete isometries $\pi:\V \to \bB(\H)$).

\begin{theorem} \label{thm: Unit characterization}
    Let $\V$ be an operator space and $e \in \V$ an element such that $\|e\| = 1$. The following statements are equivalent:
    \begin{enumerate}
        \item There exists a linear complete isometry $\pi: \V \to \bB(\H)$ such that $\pi(e)=I$.
        \item For every valid pair $(A,B)$, $\|A \odot_m B \| \leq \|A\| \|B\|$ where $m$ is the trivial partial product (i.e. the trivial partial product is completely contractive).
        \item For every $x, y \in M_n(\V)$,
        \[ \|x+y\| \leq \| \begin{pmatrix} x & I_n \otimes e \end{pmatrix} \| \| \begin{pmatrix} I_n \otimes e \\ y \end{pmatrix} \| \]
    \end{enumerate}
\end{theorem}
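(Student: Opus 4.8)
The plan is to prove the cycle of implications $(1) \Rightarrow (3) \Rightarrow (2) \Rightarrow (1)$, using the Blecher--Neal characterization (Theorem~\ref{thm: Blecher-Neal unit}) as the bridge back to $(1)$. The implications $(1) \Rightarrow (2)$ and $(1) \Rightarrow (3)$ should be the easy ``concrete'' directions, verified by plugging a unital complete isometry $\pi : \V \to \bB(\H)$ with $\pi(e) = I$ into the relevant inequalities; the genuine content is in producing $(1)$ from the abstract norm conditions.

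First I would establish $(1) \Rightarrow (3)$. Assuming a complete isometry $\pi$ with $\pi(e) = I$, for $x,y \in M_n(\V)$ I would apply $\pi^{(n)}$ and use the submultiplicativity of the operator norm on $\bB(\H^n)$, writing $x + y = \begin{pmatrix} x & I_n \otimes e \end{pmatrix} \begin{pmatrix} I_n \otimes e \\ y \end{pmatrix}$ as an honest matrix product (here $I_n \otimes e$ maps to $I_{\H^n}$ under $\pi$). Since $\pi$ is a complete isometry the norms of the two block matrices are computed in $\V$, giving exactly the inequality in $(3)$. Next, $(3) \Rightarrow (2)$: a valid pair for the trivial product has each ``product'' $m(a_{ik},b_{kj})$ equal to either $a_{ik}$ (when $b_{kj} \in \bC e$) or $b_{kj}$ (when $a_{ik} \in \bC e$), so $A \odot_m B$ is a sum of two terms, one contributed by columns of $A$ paired against unit-entries of $B$ and one by unit-entries of $A$ paired against rows of $B$. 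I would reorganize $A \odot_m B = x + y$ where $x$ absorbs the $A$-contributions and $y$ the $B$-contributions, and then bound $\|\begin{pmatrix} x & I_n \otimes e\end{pmatrix}\| \leq \|A\|$ and $\|\begin{pmatrix} I_n \otimes e \\ y \end{pmatrix}\| \leq \|B\|$ by viewing these augmented matrices as compressions/submatrices of $A$ and $B$ after padding with unit columns. Applying $(3)$ then yields $\|A \odot_m B\| \leq \|A\|\|B\|$.

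The main obstacle, and the crux of the argument, is $(2) \Rightarrow (1)$, which I would route through Theorem~\ref{thm: Blecher-Neal unit}. The goal is to deduce the Blecher--Neal identity $\|\begin{pmatrix} I_n \otimes e & x\end{pmatrix}\| = \|\begin{pmatrix} I_n \otimes e \\ x\end{pmatrix}\| = \sqrt{1 + \|x\|^2}$ from the complete contractivity of the trivial product. The inequality $\leq \sqrt{1 + \|x\|^2}$ is already noted to hold in any operator space, so the work is the reverse inequality $\geq$. To get this I expect to feed carefully chosen valid pairs into $(2)$: taking $A = \begin{pmatrix} I_n \otimes e & x \end{pmatrix}$ (or a scalar rescaling) paired with a column built from $I_n \otimes e$ and a scalar multiple of the unit, the product $A \odot_m B$ can be arranged to equal $x + \lambda(I_n \otimes e)$ for a free parameter $\lambda$, and optimizing $\lambda$ against $\|A\|\|B\|$ should force the square-root lower bound. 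This optimization is the delicate step, since one must choose the auxiliary unit-multiples so that the inequality from $(2)$ becomes sharp precisely at $\sqrt{1 + \|x\|^2}$.

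Concretely, I anticipate using a $1 \times 2$ times $2 \times 1$ valid pairing with a scalar weight $t > 0$, of the form $\begin{pmatrix} t(I_n \otimes e) & x\end{pmatrix} \odot_m \begin{pmatrix} t^{-1}(I_n \otimes e) \\ I_n \otimes e \end{pmatrix} = I_n \otimes e + x$, where validity holds because in each product one factor is a unit-multiple. Complete contractivity gives $\|I_n \otimes e + x\| \leq \sqrt{t^2 + \|x\|^2}\,\sqrt{t^{-2} + 1}$, and a symmetric pairing handles the column version; combined with a normalization trick (replacing $x$ by $e^{i\theta}x$ or using that $\|I_n\otimes e + x\|$ can be pushed to $\sqrt{1 + \|x\|^2}$ after optimizing $t$ and accounting for the $I_n\otimes e$ term) I would extract the two Blecher--Neal equalities. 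Once those equalities are in hand, Theorem~\ref{thm: Blecher-Neal unit} produces the desired unital complete isometry, closing the cycle. The careful bookkeeping of which entries are unit-multiples in each valid pair, and the precise scalar optimization yielding the sharp constant, is where I expect the real effort to lie.
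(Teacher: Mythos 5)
Your cycle breaks in both of its nontrivial links. First, the implication $(3) \Rightarrow (2)$ as you sketch it rests on false estimates. For the trivial product one has $m(a,\lambda e) = \lambda a$, so the ``$A$-contribution'' in your decomposition $A \odot_m B = x + y$ has the form $A'\beta$, where $\beta$ is the scalar coefficient matrix extracted from the $\bC e$-entries of $B$; the size of $B$ therefore leaks into $x$, and the claimed bound $\|\begin{pmatrix} x & I_n \otimes e \end{pmatrix}\| \leq \|A\|$ fails outright. Concretely, take $A = (x_0)$ with $\|x_0\| = 1$, $x_0 \notin \bC e$, and $B = (2e)$: this is a valid pair with $A \odot_m B = 2x_0$, yet $\|\begin{pmatrix} 2x_0 & e \end{pmatrix}\| \geq 2 > 1 = \|A\|$ (compress by the scalar column $\begin{pmatrix}1 \\ 0\end{pmatrix}$). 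The trivial implication in fact runs the other way: $(2) \Rightarrow (3)$ is immediate, because $\begin{pmatrix} x & I_n \otimes e \end{pmatrix}$ and $\begin{pmatrix} I_n \otimes e \\ y \end{pmatrix}$ form a valid pair whose $\odot_m$-product is exactly $x + y$. The paper's chain is accordingly $(1) \Rightarrow (2) \Rightarrow (3)$, with all the content in $(3) \Rightarrow (1)$.

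Second, your $(2) \Rightarrow (1)$ step cannot succeed with the pairs you propose: pairing a row containing $x$ against a column whose entries are all unit multiples produces $x + \lambda(I_n \otimes e)$, and $(2)$ then yields only an \emph{upper} bound on $\|x + \lambda(I_n\otimes e)\|$ --- there is no quantity with an a priori lower bound to push against. Quantitatively, optimizing your displayed inequality gives $\min_{t>0}\sqrt{t^2+\|x\|^2}\,\sqrt{t^{-2}+1} = 1+\|x\|$, i.e.\ the triangle inequality, and the scalar-parameter method is provably insufficient: for the nilpotent Jordan block $x = \begin{pmatrix} 0 & 1 \\ 0 & 0 \end{pmatrix}$ in $\bB(\bC^2)$ with $e = I$, statement $(1)$ holds and $\|\begin{pmatrix} I & x \end{pmatrix}\| = \sqrt{2}$, yet $\|x + \lambda I\|^2 = |\lambda|^2 + \tfrac12 + \sqrt{|\lambda|^2 + \tfrac14} < 2(1+|\lambda|^2)$ for every $\lambda \in \bC$ (and $\sup_\theta \|I + e^{i\theta}x\| = \tfrac{1+\sqrt5}{2} < 2$), so no choice of $\lambda$ or $e^{i\theta}$ can certify the Blecher--Neal lower bound $\sqrt{1+\|x\|^2}$. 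The missing idea, which is the paper's one-line trick, is to put $x$ in \emph{both} factors: $\begin{pmatrix} x & I_n \otimes e \end{pmatrix}$ and $\begin{pmatrix} I_n \otimes e \\ x \end{pmatrix}$ form a valid pair with product $x + x$, whose norm is exactly $2\|x\|$; for $\|x\| = 1$ this gives $2 \leq \|\begin{pmatrix} x & I_n \otimes e \end{pmatrix}\| \, \|\begin{pmatrix} I_n \otimes e \\ x \end{pmatrix}\|$, and since each factor is at most $\sqrt{2}$ by the general row/column estimate noted after Theorem \ref{thm: Blecher-Neal unit}, both factors equal $\sqrt{2}$ --- precisely the hypothesis of Theorem \ref{thm: Blecher-Neal unit}, using its ``suffices when $\|x\|=1$'' clause. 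Your $(1) \Rightarrow (3)$ direction is fine and matches the paper's (trivial) concrete verification.
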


\begin{proof}
    The implications $(1) \implies (2) \implies (3)$ are trivial. We show that $(3) \implies (1)$. By Theorem \ref{thm: Blecher-Neal unit}, (1) is equivalent to the equation 
    \begin{equation} \| \begin{pmatrix} I_n \otimes e & x \end{pmatrix} \| = \|\begin{pmatrix} I_n \otimes e \\ x \end{pmatrix} \| = \sqrt{2} \label{eqn: BN} \end{equation}
    whenever $\|x\| = 1$. Thus, it suffices to show that (3) implies Equation \ref{eqn: BN}. To that end, suppose that $x \in M_n(\V)$ and $\|x\| = 1$. Then
    \[ 2 = \|x + x\| \leq \| \begin{pmatrix} x & I_n \otimes e \end{pmatrix} \| \| \begin{pmatrix} I_n \otimes e \\ x \end{pmatrix} \| \]
    by statement (3). Hence
    \[ \| \begin{pmatrix} x & I_n \otimes e \end{pmatrix} \| \geq \frac{2}{\| \begin{pmatrix} I_n \otimes e \\ x \end{pmatrix} \|} \geq \frac{2}{\sqrt{1 + \|x\|^2}} = \sqrt{2} \]
    since 
    \[ \| \begin{pmatrix} a \\  b \end{pmatrix} \| \leq \sqrt{\|a\|^2 + \|b\|^2} \]
    for any $a,b \in M_n(\V)$. Because
    \[ \| \begin{pmatrix} x & I_n \otimes e \end{pmatrix} \| \leq \sqrt{ \|x\|^2 + \|I_n \otimes e\|^2} = \sqrt{2}, \]
    it follows that $\| \begin{pmatrix} x & I_n \otimes e \end{pmatrix} \| = \| \begin{pmatrix} I_n \otimes e & x \end{pmatrix} \| = \sqrt{2}$. The other equality is similar.
\end{proof}

\begin{remark}
    \emph{Suppose that $\S$ is an operator system with unit $e$. Then it is well-known that any complete isometry $\pi:\ S \to \bB(\H)$ such that $\pi(e)=I$ is necessarily $*$-preserving and a complete order embedding (c.f. \cite[Proposition 3.5]{paulsen2002completely}). It is also possible to identify which elements of $\S$ are self-adjoint and which are positive by examining the matrix norms. For example, it was shown in \cite{blecher2011metric} that if $\S$ is an operator space with unit $e$, then $x \in M_n(\S)$ is self-adjoint if and only if $\|I_n \otimes e + itx\|_n^2 = 1 + t^2\|x\|^2$ for all $t \in \mathbb{R}$, and is positive if and only if it is self-adjoint and $\| \|x\|e - x\| \leq \|x\|$.}
\end{remark}

\subsection{General partial products}

We now consider the general situation of potentially non-trivial partial products. We will make use of some results concerning multiplier algebras. Given a unital operator space $\V$, a linear map $\lambda: \V \to \V$ is called a \textit{left multiplier} if there exists $x \in I(\V)$ such that $\lambda(y) = xy$ for all $y \in \V$. Similarly, a map $\rho: \V \to \V$ is a \textit{right multiplier} if there exists $z \in I(\V)$ such that $\rho(y) = yz$ for all $y \in \V$. Since $\V$ is unital, it follows that whenever $\lambda(y) = xy$ is a left multiplier (or $\rho(y)=yz$ is a right multiplier), then $x \in \V$ since $\lambda(e)=x$ (and $z \in V$ since $\rho(e)=z$). This following is a combination of Theorem 16.12 \cite{paulsen2002completely} and remarks found after \cite[Cor 1.8]{BlecherMultipliers} and \cite[p.192]{BlecherOperatorAlgebras}.

\begin{theorem}[Blecher-Effros-Zarikian] \label{thm: BEZ}
    Let $\V$ be a unital operator space. Suppose that $\varphi: \V \to \V$ is completely contractive, and define $\tau: C_2(\V) \to C_2(\V)$ by
    \[ \tau \left( \begin{pmatrix} x \\ y \end{pmatrix} \right) = \begin{pmatrix} \varphi(x) \\ y \end{pmatrix}. \]
    If $\tau$ is completely contractive, then $\tau$ is a left multiplier, i.e. for every $x \in \V$, $\varphi(x) = \varphi(I) \cdot x$, where the product is taken in $C^*_e(\V)$.
    
    Similarly, if the map $\gamma: R_2(\V) \to R_2(\V)$ given by 
    \[ \gamma \left( \begin{pmatrix} x & y \end{pmatrix} \right) = \begin{pmatrix} \varphi(x) & y \end{pmatrix} \]
    is completely contractive, then for every $x \in \V$, $\varphi(x) = x \cdot \varphi(I)$ where the product is taken in $C^*_e(\V)$.
\end{theorem}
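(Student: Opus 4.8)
The plan is to transport the problem into the injective envelope, where the column module structure makes left multipliers transparent, and then to read off $\varphi(x)=\varphi(e)x$ from a single matrix entry. Write $\A := I(\V)$, which, since $\V$ is unital, is a unital C*-algebra containing $\V$ completely isometrically with $e \mapsto 1$ and $C^*(\V) = C^*_e(\V) \subseteq \A$. First I would observe that the column space $C_2(\A) = M_{2,1}(\A)$ is injective: it is the range of the completely contractive idempotent $X \mapsto X E_{11}$ on the injective C*-algebra $M_2(\A)$, and the range of a completely contractive idempotent on an injective operator space is again injective. Consequently the hypothesis that $\tau : C_2(\V) \to C_2(\V) \subseteq C_2(\A)$ is completely contractive lets me extend $\tau$ to a completely contractive map $\hat\tau : C_2(\A) \to C_2(\A)$.

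Granting for the moment that $\hat\tau$ may be taken to be a right $\A$-module map, the conclusion follows by a short computation. Every completely contractive right $\A$-module map $T$ on the column $C_2(\A)$ is left multiplication by a contraction $\mathbf T = (t_{ij}) \in M_2(\A)$: the identity $T\left(\begin{smallmatrix} a \\ b\end{smallmatrix}\right) = T\left(\begin{smallmatrix}1\\0\end{smallmatrix}\right)a + T\left(\begin{smallmatrix}0\\1\end{smallmatrix}\right)b$ recovers the entries $t_{ij}$, and complete contractivity of left multiplication on the column module is exactly $\|\mathbf T\| \le 1$. Matching $\hat\tau$ with $\tau$ on $C_2(\V)$ then forces $t_{11}x = \varphi(x)$ and $t_{21}x = 0$ for all $x \in \V$, together with $t_{12}y = 0$ and $t_{22}y = y$ for all $y\in\V$; evaluating the first relation at $x = e$ gives $t_{11} = \varphi(e)$, whence $\varphi(x) = \varphi(e)\,x$ with the product computed in $C^*_e(\V) \subseteq \A$. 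The right-multiplier statement is handled in mirror-image fashion, replacing the column $C_2(\A)$ and right modules by the row $R_2(\A)$ and left modules, so that completely contractive left $\A$-module maps become right multiplications and $\gamma$ yields $\varphi(x) = x\,\varphi(e)$.

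The main obstacle is precisely the step I set aside: upgrading the merely completely contractive extension $\hat\tau$ to a right $\A$-module map. A completely contractive self-map of $C_2(\A)$ need not be module-linear in general, so this must exploit the special role of the \emph{injective envelope} rather than an arbitrary injective C*-cover. I expect it to be resolved through the rigidity and essentiality of $I(\V)$ established by Hamana: since $\V$ generates $\A$ as a C*-algebra and the identity is the only completely contractive self-map of $I(\V)$ fixing $\V$, any completely contractive extension that restricts to $\tau$ on $C_2(\V)$ is pinned down tightly enough to respect the module action. This is the technical heart of the Blecher--Effros--Zarikian multiplier characterization, which identifies the left multiplier algebra with $\M_\ell(\V) = \{a \in I(\V) : a\V \subseteq \V\}$ and matches its multiplier norm with the complete contractivity condition imposed on $\tau$; I would invoke that identification to secure module-linearity of $\hat\tau$, after which the computation of the previous paragraph finishes the proof.
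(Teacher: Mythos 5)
The first thing to note is that the paper does not prove this statement at all: it is quoted as background, attributed to a combination of \cite[Theorem 16.12]{paulsen2002completely} and remarks in \cite{BlecherMultipliers} and \cite{BlecherOperatorAlgebras}. So your attempt can only be compared against the standard published proofs, whose architecture you have in fact reproduced correctly in outline: pass to $\A = I(\V)$, which is a unital C*-algebra containing $\V$ with $C^*(\V) = C^*_e(\V)$; note that $C_2(\A)$ is injective as the range of the completely contractive idempotent $X \mapsto XE_{11}$ on $M_2(\A)$; extend $\tau$ to a complete contraction $\hat\tau$ on $C_2(\A)$; and observe that a completely contractive \emph{right $\A$-module} map on $C_2(\A)$ is left multiplication by a contraction in $M_2(\A)$, after which matching entries against $\tau$ on $C_2(\V)$ and evaluating at the unit gives $\varphi(x) = \varphi(e)x$. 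That endgame computation is correct, as is the row/column symmetry for the second assertion.

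The gap is the step you yourself flagged, and it is genuine: your proposed resolution does not work as stated, and your fallback is circular. Rigidity of the injective envelope says that a completely contractive self-map of $I(\V)$ (equivalently of $C_2(I(\V)) \cong I(C_2(\V))$) restricting to the \emph{identity} on $\V$ (resp.\ $C_2(\V)$) is the identity. But $\hat\tau$ restricts to $\tau$, which is not the identity --- it moves the first coordinate by $\varphi$ --- so rigidity says nothing directly about $\hat\tau$ and does not ``pin it down'' to be module-linear. What rigidity does yield, with extra work, is corner information: the map $a \mapsto$ (second entry of $\hat\tau\binom{0}{a}$) is completely contractive and fixes $\V$, hence equals $\mathrm{id}_{\A}$ by rigidity; a norm computation against unitaries (which span the unital C*-algebra $\A$) then forces the first entry of $\hat\tau\binom{0}{a}$ to vanish, and a similar scaling argument kills the $(2,1)$-corner of the first column. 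But this only reduces the problem to a single completely contractive $\sigma : \A \to \A$ extending $\varphi$ for which $\binom{a}{b} \mapsto \binom{\sigma(a)}{b}$ is still completely contractive, and the remaining assertion --- that such a $\sigma$ on $\A$ is left multiplication by $\sigma(1)$ --- is precisely the Blecher--Effros--Zarikian theorem again, now for the injective C*-algebra $\A$ itself. Your only suggestion at this point is to ``invoke that identification,'' i.e.\ to cite $\M_\ell(\V) = \{a \in I(\V) : a\V \subseteq \V\}$ with matching norms, which \emph{is} the statement being proven. Read as a free-standing proof the proposal is therefore circular at its technical heart; read as a reduction-to-the-literature it is in the same spirit as the paper, but then the honest move is to cite the multiplier theorem for the whole statement rather than only for the module-linearity step.
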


The next result can be found in \cite[Proposition 15.10]{paulsen2002completely}.

\begin{lemma} \label{lem: C*-subalg of injective envelope}
    Suppose $1 \in \A \subseteq \S \subseteq \bB(\H)$ with $\A$ a C*-algebra and $\S$ an operator system. Then the inclusion of $\A$ into $I(\S)$, where the latter is imbued with its Choi--Effros product, is an injective $*$-homomorphism.
\end{lemma}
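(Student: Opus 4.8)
The plan is to realize $I(\S)$ concretely as the range of a minimal $\S$-projection and then identify $\A$ with a subalgebra of the multiplicative domain of that projection. Recall (Hamana) that $I(\S)$ arises as the range of a unital completely positive idempotent $\phi: \bB(\H) \to \bB(\H)$ with $\phi|_{\S} = \mathrm{id}_{\S}$, and that the Choi--Effros product on $I(\S) = \phi(\bB(\H))$ is $x \circ y := \phi(xy)$, with involution and norm inherited from $\bB(\H)$. Since $\A \subseteq \S$ and $\phi$ fixes $\S$ pointwise, we have $\phi|_{\A} = \mathrm{id}_{\A}$; in particular $\A = \phi(\A) \subseteq \phi(\bB(\H)) = I(\S)$, so the inclusion is well defined, injective (it is literally the identity on elements of $\A$), and, because $\phi$ is $*$-preserving and $\A$ is self-adjoint, automatically involution-preserving and unital. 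The only nontrivial point is multiplicativity, i.e. that the Choi--Effros product $a \circ b = \phi(ab)$ agrees with the ambient product $ab$ for all $a, b \in \A$.

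First I would verify that $\A$ lies inside the multiplicative domain of $\phi$, the set $\{x \in \bB(\H) : \phi(x^*x) = \phi(x)^*\phi(x) \text{ and } \phi(xx^*) = \phi(x)\phi(x)^*\}$. This is immediate from the two facts already in hand: $\phi$ fixes $\A$ pointwise, and $\A$ is a C*-algebra, hence closed under $a \mapsto a^*a$ and $a \mapsto aa^*$. Indeed, for $a \in \A$ the elements $a^*a$ and $aa^*$ again lie in $\A \subseteq \S$, so $\phi(a^*a) = a^*a = \phi(a)^*\phi(a)$ and likewise $\phi(aa^*) = aa^* = \phi(a)\phi(a)^*$, which are exactly the defining equalities.

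Second, I would invoke the bimodule (Schwarz) property of the multiplicative domain: for a ucp map, if $a$ lies in the multiplicative domain then $\phi(ab) = \phi(a)\phi(b)$ for every $b$. Applying this with $a, b \in \A$ yields
\[ a \circ b = \phi(ab) = \phi(a)\phi(b) = ab, \]
so the Choi--Effros product restricted to $\A$ coincides with the original product of $\A$. Together with the observations of the first paragraph, this shows the inclusion $\A \hookrightarrow I(\S)$ is an injective unital $*$-homomorphism.

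I do not expect a serious obstacle, as every ingredient — existence of the minimal $\S$-projection, the Choi--Effros product formula, and the multiplicative-domain Schwarz inequality — is standard. The one point demanding care is conceptual rather than technical: the product on $I(\S)$ is a priori the twisted product $\phi(xy)$, not the operator product of $\bB(\H)$, so one must genuinely check that these two products agree on $\A$ rather than take it for granted; the multiplicative-domain computation above is precisely what licenses that identification.
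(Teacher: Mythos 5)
Your proposal is correct, and it is essentially the argument behind the result the paper cites (Proposition 15.10 of Paulsen's book): realize $I(\S)$ as the range of a minimal $\S$-projection $\phi$, note $\phi$ fixes $\A$ pointwise and that $a^*a, aa^* \in \A \subseteq \S$ place $\A$ in the multiplicative domain of $\phi$, and conclude via Choi's multiplicative-domain theorem that the Choi--Effros product $\phi(ab)$ equals $ab$ on $\A$. No gaps; this matches the standard proof.
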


We will apply the Blecher-Effros-Zarikian Theorem to show that $m(a,b) = a \cdot b$ in $C^*_e(\V)$. Unfortunately, we cannot apply the Theorem to the operator space $V$ directly. This is because ``left multiplication by $a$'' and ``right multiplication by $b$'' are only defined as maps from $\R_a \to \V$ and $\L_b \to \V$, respectively. The proof of Theorem \ref{thm: BEZ} (e.g. the arguments in \cite[Chapter 15]{paulsen2002completely}) does not lend itself to obvious modification for our setting. However, it suffices to consider a different operator system for fixed $(a,b) \in \D$, namely
\[ \R := \begin{pmatrix} \L_b & \V \\ 0 & \R_a \end{pmatrix} \]
regarded as a subsystem of $M_2(\V)$. We can then define $\varphi_a, \psi_b: \R \to \R$ by
\[ \varphi_a \begin{pmatrix} x & y \\ 0 & z \end{pmatrix} = \begin{pmatrix} 0 & m(a,z) \\ 0 & 0 \end{pmatrix} \quad \text{and} \quad \psi_b \begin{pmatrix} x & y \\ 0 & z \end{pmatrix} = \begin{pmatrix} 0 & m(x,b) \\ 0 & 0 \end{pmatrix} \]
mimicking the matrix multiplications
\[ \begin{pmatrix} 0 & a \\ 0 & 0 \end{pmatrix} \begin{pmatrix} x & y \\ 0 &  z \end{pmatrix} = \begin{pmatrix} 0 & az \\ 0 & 0 \end{pmatrix} \quad \text{and} \quad \begin{pmatrix} x & y \\ 0 & z \end{pmatrix} \begin{pmatrix} 0 & b \\ 0 &  0 \end{pmatrix} = \begin{pmatrix} 0 & xb \\ 0 & 0 \end{pmatrix}. \]
We will need to understand $C^*_e(\R)$ in some detail to apply Theorem \ref{thm: BEZ}.

\begin{proposition} \label{prop: Structure of C*_e(V) }
    Let $\V$ be a unital operator space, let $\S$ and $\T$ be unital subspaces, and let
    \[ \R = \begin{pmatrix} \S & \V \\ 0 & \T \end{pmatrix} \]
    regarded as a unital subspace of $M_2(\V)$. Then $C^*_e(\R) \cong M_2(C^*_e(\V))$ via the $*$-isomorphism identifying $\R$ with the subspace of $M_2(\V)$ inside $M_2(C^*_e(\V))$.
\end{proposition}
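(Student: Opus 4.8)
The plan is to exhibit $M_2(C^*_e(\V))$ as a C*-cover of $\R$ and then identify it as the C*-envelope by showing it carries no nonzero boundary ideal. Throughout, identify $\V$ with its canonical completely isometric image in $\A := C^*_e(\V)$, and write $e_{ij}$ for the matrix units of $M_2$. First I would check that the inclusion $\R \hookrightarrow M_2(\V) \hookrightarrow M_2(\A)$ is a unital complete isometry: amplifying the complete isometry $\V \hookrightarrow \A$ to $2\times 2$ matrices is completely isometric, and the unit of $\R$ maps to the identity of $M_2(\A)$. Next I would verify the generation claim $C^*(\R) = M_2(\A)$. Since $\S$ and $\T$ are unital, $\R$ contains the diagonal idempotents $e_{11}\otimes e$ and $e_{22}\otimes e$ together with the full corner $\{e_{12}\otimes v : v \in \V\}$. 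Taking adjoints and products of corner elements yields $e_{11}\otimes vw^*$, and since $e \in \V$ we recover all $e_{11}\otimes v$ and $e_{11}\otimes w^*$; as $\V$ generates $\A$, the $(1,1)$ corner of $C^*(\R)$ equals $e_{11}\otimes \A$, and similarly for the $(2,2)$ corner, after which multiplying by $e_{12}\otimes e$ fills in the off-diagonal corners. Hence $C^*(\R)=M_2(\A)$ and $(M_2(\A), \mathrm{incl})$ is a C*-cover of $\R$.

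To identify this cover with $C^*_e(\R)$, I would invoke the co-universal property, which provides a surjective $*$-homomorphism $\rho: M_2(\A) \to C^*_e(\R)$ intertwining the two embeddings of $\R$. Its kernel is a closed two-sided ideal of $M_2(\A)$, and since every such ideal has the form $M_2(I)$ for a closed two-sided ideal $I$ of $\A$, we may write $\ker\rho = M_2(I)$ and set $q_0 : \A \to \A/I$ to be the quotient map, noting $M_2(\A)/M_2(I) \cong M_2(\A/I)$ with the quotient acting entrywise through $q_0$. By construction the quotient map $M_2(\A)\to M_2(\A/I)\cong C^*_e(\R)$ is completely isometric on $\R$. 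Restricting this to the corner $\{e_{12}\otimes v : v\in\V\}$ and using that $\|e_{12}\otimes X\| = \|X\|$ in any C*-algebra (as $(e_{12}\otimes X)^*(e_{12}\otimes X) = e_{22}\otimes X^*X$) shows that $q_0$ is completely isometric on $\V \subseteq \A$.

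It remains to deduce $I = 0$. Because $\V$ contains $e$ and $q_0$ is a unital complete isometry on $\V$ with $\A/I = q_0(C^*(\V)) = C^*(q_0(\V))$, the pair $(\A/I, q_0|_\V)$ is itself a C*-cover of $\V$. The co-universal property of $C^*_e(\V)=\A$ then yields a $*$-homomorphism $\pi : \A/I \to \A$ with $\pi\circ q_0|_\V = \mathrm{id}_\V$; since $\V$ generates $\A$, the endomorphism $\pi\circ q_0$ equals $\mathrm{id}_\A$, forcing $q_0$ to be injective and hence $I=0$. Therefore $\ker\rho = 0$, so $\rho$ is a $*$-isomorphism carrying $\R$ onto its copy in $M_2(\A)$, giving the claimed identification $C^*_e(\R)\cong M_2(C^*_e(\V))$. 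The step I expect to require the most care is the passage from ``completely isometric on $\R$'' to ``completely isometric on $\V$''—one must confirm that the $(1,2)$-corner inherits exactly the matrix norms of $\V$ and that the quotient acts there entrywise through $q_0$—whereas the generation claim and the ideal-structure fact for $M_2(\A)$ are routine.
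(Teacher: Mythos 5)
Your proof is correct, and it takes a genuinely different route from the paper's. The paper works inside the injective envelope $I(\R)$: it uses Lemma \ref{lem: C*-subalg of injective envelope} (twice) to show $I(\R)$ decomposes as a $2\times 2$ block operator space with the unit $U$ of $\V$ sitting as a unitary in the corner, then identifies the diagonal blocks with $C^*_e(\V)$ via the co-universal property and the off-diagonal block with the ternary envelope $T^*_e(\V)\cong C^*_e(\V)$. You bypass the injective envelope and ternary machinery entirely: after checking that $M_2(C^*_e(\V))$ is a C*-cover of $\R$ (your generation argument via $e_{11}\otimes e$, $e_{22}\otimes e$, and the full corner $e_{12}\otimes \V$ is sound, and correctly uses only the units of $\S$ and $\T$), you exploit the fact that every closed two-sided ideal of $M_2(\A)$ is of the form $M_2(I)$, so the canonical surjection $\rho: M_2(\A)\to C^*_e(\R)$ is automatically an entrywise quotient by some $q_0:\A\to\A/I$. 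The corner norm identity $\|e_{12}\otimes X\|=\|X\|$ (valid at all matrix levels via the C*-identity and the canonical shuffle, as you flag) transfers complete isometry on $\R$ to complete isometry of $q_0$ on $\V$, and a second application of the co-universal property — now for $C^*_e(\V)$ — gives a left inverse $\pi$ of $q_0$ on $\V$, whence $\pi\circ q_0=\mathrm{id}_\A$ on the generated algebra and $I=0$. What each approach buys: the paper's route yields explicit structural information about $I(\R)$ and the unitary $U$ that is reused elsewhere (e.g., to compute $\varphi_a(I)$ in the proof of Theorem \ref{thm: pp rep in C*-env}), while your argument is more elementary and self-contained — it needs no Choi--Effros product, no block decomposition of the injective envelope, and no ternary envelopes, and it visibly generalizes to larger upper-triangular patterns. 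The key simplification is that you never need to establish a priori that $C^*_e(\R)$ carries a $2\times 2$ block structure: the ideal structure of $M_2(\A)$ forces it.
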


\begin{proof}
    First recall that $C^*_e(\R)$ is the C*-subalgebra generated by $\R$ as a subset of its injective envelope $I(\R)$, where the latter is imbued with the Choi--Effros product. Since $\S$ and $\T$ are unital and the inclusion of $\R$ into $I(\R)$ fixes the $2 \times 2$ diagonal scalar matrices (by Lemma \ref{lem: C*-subalg of injective envelope}), $I(\R)$ decomposes as a $2 \times 2$ matrix of operator spaces with $\S$ contained in the upper left corner, $\T$ in the lower right, and $\V$ in the upper right (c.f. the proof of \cite[Theorem 15.12]{paulsen2002completely}).

    Since $\S$, $\T$, and $\V$ are all unital, we see that $M_2(\bC) \subseteq \R + \R^*$. Since $C^*_e(\R) = C^*_e(\R + \R^*)$, we may apply Lemma \ref{lem: C*-subalg of injective envelope} again and the fact that $C^*_e(\R)$ is the C*-algebra generated by $\R$ inside its injective envelope to conclude that 
    \[ \begin{pmatrix} a & b \\ c & d \end{pmatrix} \mapsto \begin{pmatrix} aI_{\S} & b U \\ c U^* & d I_{\T} \end{pmatrix} \]
    is an injective $*$-homomorphism, where $I_{\S}$ denotes the unit in $\S$, $I_{\T}$ denotes the unit in $\T$ and $U$ denotes the unit in $\V$. It follows that
    \[ \begin{pmatrix} 0 & 0 \\ U^* & 0 \end{pmatrix} \begin{pmatrix} 0 & U \\ 0 & 0 \end{pmatrix} = \begin{pmatrix} 0 & 0 \\ 0 & U^*U \end{pmatrix} = \begin{pmatrix} 0 & 0 \\ 0 & I_{\T} \end{pmatrix} \] and
    \[ \begin{pmatrix} 0 & U \\ 0 & 0 \end{pmatrix} \begin{pmatrix} 0 & 0 \\ U^* & 0 \end{pmatrix} = \begin{pmatrix}UU^* & 0 \\0 & 0 \end{pmatrix} = \begin{pmatrix} I_{\S} & 0 \\ 0 & 0 \end{pmatrix} \]
    Thus, we see that $U$ is a unitary, and we may assume $I(\R) \subseteq \bB(\H \oplus \H)$ for some Hilbert space $\H$ and that $I_{\S} = I_{\T} = I_{\H}$. Since $I(\R)$ is a C*-algebra with this $2\times 2$ block structure, it follows that the upper left corner $\A$ and lower right corner $\B$ are C*-algebras and that the upper right corner $\M$ is an $\A-\B$ bimodule.

    Finally, we want to show that $\A = \B = C^*_e(\V)$ and that (without loss of generality) $U = I_{\S} = I_{\T} = I_{\V}$. Since $M_2(C^*_e(\V))$ is a C*-cover for $\R$, there exists a surjective $*$-homomorphism $\pi: M_2(C^*_e(\V)) \to C^*_e(\R)$ which restricts to a unital complete isometry on $\R$. For any $x \in \V$, we have
    \[ \pi \begin{pmatrix} x & 0 \\ 0 & 0 \end{pmatrix} = \pi \begin{pmatrix} 0 & x \\ 0 & 0 \end{pmatrix} \pi \begin{pmatrix} 0 & 0 \\ I & 0 \end{pmatrix} = \begin{pmatrix} 0 & \hat{x} \\ 0 & 0 \end{pmatrix} \begin{pmatrix} 0 & 0 \\ U^* & 0 \end{pmatrix}. \] So $\pi(x) = \hat{x} U^*$ where $\hat{x}$ denotes the image of $x$ in $\M$ and $\pi(x)$ denotes the image of $x$ in $\A$. Restricting $\pi$ to the upper left corner of $M_2(C^*_e(\V))$, we see that $\A$ is a quotient of $C^*_e(\V)$. Since $\A$ contains an isometric copy of $\V$, by the co-universal property of the C*-envelope, we get that $\A \cong C^*_e(\V)$. Similarly, we obtain that $\B \cong C^*_e(\V)$. Finally, the restriction of $\pi$ to the upper right corner of $M_2(C^*_e(\V))$ shows that $\M$ is a ternary quotient of $C^*_e(\V) \cong T^*_e(\V)$. Since $\M$ is a ternary cover of $\V$, we get that $\M \cong T^*_e(\V) \cong C^*_e(\V)$ where the isomorphism $T^*_e(\V) \cong C^*_e(\V)$ identifies the untitary $U$ with $I_{\V}$.
\end{proof}

\begin{theorem} \label{thm: pp rep in C*-env}
    Let $\V$ be a unital operator space, $\D \subseteq \V \times \V$ a domain, and $m: \D \to \V$ a completely contractive partial product. Let $j: \V \to C^*_e(\V)$ be the embedding of $\V$ into its C*-envelope. Then for fixed $(a,b) \in \D$ we have that $j(m(a,b)) = j(a)j(b)$.
\end{theorem}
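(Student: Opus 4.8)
The plan is to realize the one-sided maps $z \mapsto m(a,z)$ and $x \mapsto m(x,b)$ as genuine one-sided multiplications inside $C^*_e(\V)$ by invoking the Blecher--Effros--Zarikian multiplier criterion (Theorem~\ref{thm: BEZ}), applied not to $\V$ itself but to the auxiliary operator system $\R = \begin{pmatrix} \L_b & \V \\ 0 & \R_a \end{pmatrix}$, whose C*-envelope is identified with $M_2(C^*_e(\V))$ by the preceding structural proposition. I fix $(a,b)\in\D$ and take the unital subspaces $\R_a \ni b$ and $\L_b \ni a$ furnished by the domain axioms, so that the maps $\varphi_a,\psi_b:\R\to\R$ displayed above are well defined: the values $m(a,z)$ and $m(x,b)$ lie in $\D$ precisely because $z$ ranges over the lower-right corner $\R_a$ and $x$ over the upper-left corner $\L_b$. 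Since $m$ is bilinear we have $\varphi_{a/\|a\|}=\|a\|^{-1}\varphi_a$ and the case $a=0$ is trivial, so after rescaling I may assume $\|a\|\le 1$ and then aim to apply the left-multiplier half of Theorem~\ref{thm: BEZ} to $\varphi_a$ on $\R$.

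The heart of the argument is verifying the two hypotheses of Theorem~\ref{thm: BEZ}: that $\varphi_a$ is completely contractive and that the doubled map $\tau_a\begin{pmatrix} W \\ W'\end{pmatrix}=\begin{pmatrix}\varphi_a(W)\\ W'\end{pmatrix}$ on $C_2(\R)$ is completely contractive. Both reduce to explicit $\odot_m$-factorizations by contractive matrices over $\V$. For $\varphi_a$ I write $\varphi_a(W)=Q\odot_m W$ with $Q=\begin{pmatrix} 0 & a \\ 0 & 0\end{pmatrix}\in M_2(\V)$; the pair $(Q,W)$ is valid because $a$ is paired only against the lower-right entries of $W$, which lie in $\R_a$, while the $0$ entries lie in $\bC e$, and $\|Q\|=\|a\|\le 1$. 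For $\tau_a$, viewing $C_2(\R)\subseteq M_{4,2}(\V)$, I exhibit the single $4\times 4$ matrix $P$ over $\V$ having $a$ in the $(1,2)$ entry, $e$ in the $(3,3)$ and $(4,4)$ entries, and $0$ elsewhere, so that $\tau_a=P\odot_m(\cdot)$; the pair is again valid ($a$ meets $\R_a$-entries, $e$ meets arbitrary entries via condition~(1), $0\in\bC e$), and $\|P\|=\max(\|a\|,1)=1$. Complete contractivity at every matrix level is then immediate from the single defining inequality $\|P\odot_m X\|\le\|P\|\,\|X\|$ of complete contractivity of $m$, since the analogous factorization persists after tensoring $P$ with scalar blocks.

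With both hypotheses established, Theorem~\ref{thm: BEZ} yields $\varphi_a(W)=\varphi_a(I_{\R})\cdot W$, the product taken in $C^*_e(\R)$. Unitality of $m$ gives $\varphi_a(I_{\R})=\begin{pmatrix} 0 & a \\ 0 & 0\end{pmatrix}$, which under the identification $C^*_e(\R)\cong M_2(C^*_e(\V))$ becomes $\begin{pmatrix} 0 & j(a) \\ 0 & 0\end{pmatrix}$. Multiplying this against the image $\begin{pmatrix} j(x) & j(y) \\ 0 & j(z)\end{pmatrix}$ of $W$ and comparing upper-right corners gives $j(m(a,z))=j(a)j(z)$ for every $z\in\R_a$; specializing to $z=b\in\R_a$ proves the claim. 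The symmetric route through $\psi_b$ and the right-multiplier half of Theorem~\ref{thm: BEZ} gives $j(m(x,b))=j(x)j(b)$ for $x\in\L_b$, covering the variant in which one instead arranges $a\in\L_b$.

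I expect the main obstacle to be the complete-contractivity check for $\tau_a$: pinning down the matrix $P$, confirming that $(P,\cdot)$ is a valid pair at every matrix level, and thereby ensuring that the \emph{partial-product} inequality for $m$ (not merely a pointwise multiplication bound) applies uniformly. The conceptual subtleties — that one must pass to the auxiliary system $\R$ because left and right multiplication by $a$ and $b$ are only partially defined on $\V$, and that $C^*_e(\R)$ rather than $C^*_e(\V)$ is where the multiplier theorem operates — are already dissolved by the structural proposition; the residual work is purely in checking that the $\odot_m$-factorizations faithfully encode $\varphi_a$ and $\tau_a$ while respecting the domain $\D$.
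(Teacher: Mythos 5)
Your proposal is correct and follows essentially the same route as the paper: pass to the auxiliary system $\R = \begin{pmatrix} \L_b & \V \\ 0 & \R_a \end{pmatrix}$, verify the Blecher--Effros--Zarikian hypotheses by an explicit $\odot_m$-factorization of $\tau_a$ through a norm-one matrix with $a$ in the $(1,2)$ slot and units on the diagonal (the paper's version inserts an intermediate compression $X'$ before factoring, which is only a cosmetic difference), and then read off $j(m(a,b))=j(a)j(b)$ from the upper-right corner under the identification $C^*_e(\R)\cong M_2(C^*_e(\V))$. Note only that your assumption $b\in\R_a$ (resp.\ $a\in\L_b$) is the same implicit saturation of $\D$ that the paper's own proof uses, so this is not a point of divergence.
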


\begin{proof}
    Let $\S = \L_b$ and $\T = \R_a$. Let
    \[ \R = \begin{pmatrix} \S & \V \\ 0 & \T \end{pmatrix} \]
    regarded as a subsystem of $M_2(\V)$. For each $a,a' \in \S$, $b \in \T$, and $x \in \V$, define
    \[ \varphi_a \begin{pmatrix} a' & x \\ 0 & b \end{pmatrix} = \begin{pmatrix} 0 & m(a,b) \\ 0 & 0 \end{pmatrix}. \]
    If $\|a\| > 1$, then rescale $a$ so that $\|a\| = 1$. Then we claim that
    \[ \tau_a \left( \begin{pmatrix} X \\ Y \end{pmatrix} \right) = \begin{pmatrix} \varphi_a(X) \\ Y \end{pmatrix} \]
    is completely contractive. To see this, let $X, Y \in M_n(\R)$. Then up to canonical shuffle,
    \[ X = \begin{pmatrix} a' & x \\ 0 & b  \end{pmatrix} \quad \text{and} \quad Y = \quad \begin{pmatrix} c & y \\ 0 & d  \end{pmatrix} \]
    for $a', c \in M_n(\S)$, $b,d \in M_n(\T)$, and $x,y \in M_n(\V)$. Then clearly
    \[ \| \begin{pmatrix} X' \\ Y \end{pmatrix} \| \leq \| \begin{pmatrix} X \\ Y \end{pmatrix} \| \quad \text{where} \quad X' = \begin{pmatrix} 0 & 0 \\ 0 & b \end{pmatrix}.  \]
    Now
    \begin{eqnarray}
        \| \tau_a^{(n)} \left( \begin{pmatrix} X \\ Y \end{pmatrix} \right) \| = \| \begin{pmatrix} \varphi_a^{(n)}(X) \\ Y \end{pmatrix} \| & = & \| \begin{pmatrix} 0 & a \odot_m b \\ 0 & 0 \\ c & y \\ 0 & d \end{pmatrix} \| \nonumber \\
        & = & \| \begin{pmatrix} 0 & a & 0 & 0 \\ 0 & 0 & 0 & 0 \\ 0 & 0 & I & 0 \\ 0 & 0 & 0 & I \end{pmatrix} \odot_m  \begin{pmatrix} 0 & 0 \\ 0 & b \\ c & y \\ 0 & d \end{pmatrix} \| \nonumber \\
        & \leq & \| \begin{pmatrix} X' \\ Y \end{pmatrix} \| \leq \| \begin{pmatrix} X \\ Y \end{pmatrix} \|. \nonumber
    \end{eqnarray}
    By Theorem \ref{thm: BEZ} we get that $\varphi_a(x) = \varphi_a(I) \cdot x$ in $C^*_e(\R)$ for all $x \in \R$. By Proposition \ref{prop: Structure of C*_e(V) }, we see that
    \[ \varphi_a(I) = \begin{pmatrix} 0 & j(a) \\ 0 & 0 \end{pmatrix} \quad \text{and so} \quad \begin{pmatrix} 0 & m(a,b) \\ 0 & 0 \end{pmatrix} = \varphi_a \begin{pmatrix} 0 & 0 \\ 0 & j(b) \end{pmatrix} = \begin{pmatrix} 0 & j(a)j(b) \\ 0 & 0 \end{pmatrix}. \]
    It follows that $j(m(a,b)) = j(a) j(b)$ in $C^*_e(\V)$, since $C^*_e(\R) = M_2(C^*_e(\V))$.
\end{proof}

\begin{corollary} \label{cor: abstract product characterization}
    Let $\V$ be an operator space and $e \in \V$ a unit vector. Then $(\D,m)$ is a completely contractive partial product on $(\V,e)$ if and only if there exists a Hilbert space $H$ and a complete isometry $\pi: \V \to \bB(\H)$ such that $\pi(m(a,b))=\pi(a)\pi(b)$ for all $a,b \in \V$ and $\pi(e)=I$.
\end{corollary}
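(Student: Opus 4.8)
The plan is to recognize that this corollary is essentially a repackaging of the section's main theorem, Theorem~\ref{thm: pp rep in C*-env}, together with the Blecher--Neal unit criterion, Theorem~\ref{thm: Unit characterization}. The reverse implication is routine, so the bulk of the real content sits in the forward implication, but most of that labor is already discharged by Theorem~\ref{thm: pp rep in C*-env}. I would split the argument into the two directions and, in the forward direction, insert one bootstrapping step to upgrade the weak hypothesis (``$\V$ an operator space with a unit \emph{vector}'') to genuine unitality before invoking the machinery, which implicitly assumes $\V$ is a unital operator space.

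For the reverse direction, I would start from a complete isometry $\pi:\V \to \bB(\H)$ with $\pi(e)=I$ satisfying $\pi(m(a,b))=\pi(a)\pi(b)$ for all $(a,b)\in\D$, and verify complete contractivity directly. Given a valid pair $A=(a_{x,y})\in M_{n,k}(\V)$ and $B=(b_{y,z})\in M_{k,m}(\V)$, I would compute entrywise that
\[ \pi^{(n,m)}(A \odot_m B) = \pi^{(n,k)}(A)\,\pi^{(k,m)}(B), \]
since the $(x,z)$ entry of the left side is $\pi\big(\sum_y m(a_{x,y},b_{y,z})\big)=\sum_y \pi(a_{x,y})\pi(b_{y,z})$, which is exactly the $(x,z)$ entry of the operator product on the right under the identification $M_{n,m}(\bB(\H))\cong\bB(\H^m,\H^n)$. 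Then complete isometry of $\pi$ and submultiplicativity of the operator norm give $\|A\odot_m B\|=\|\pi^{(n,k)}(A)\,\pi^{(k,m)}(B)\|\le\|A\|\,\|B\|$, so $m$ is completely contractive; unitality of $m$ also follows, since $\pi(m(x,e))=\pi(x)\pi(e)=\pi(x)$ forces $m(x,e)=x$ and symmetrically $m(e,x)=x$.

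For the forward direction, I would first observe that the trivial partial product is a restriction of $m$: by domain axiom~(1) we have $\bC e\times\V\subseteq\D$ and $\V\times\bC e\subseteq\D$, and by unitality and bilinearity $m(\lambda e,x)=\lambda x=m(x,\lambda e)$, so $m$ agrees with the trivial product $m_0$ on the smaller domain. Hence every valid pair for $m_0$ is valid for $m$ with $A\odot_{m_0}B=A\odot_m B$, and complete contractivity of $m$ passes to $m_0$. Applying the implication $(2)\Rightarrow(1)$ of Theorem~\ref{thm: Unit characterization} then shows that $(\V,e)$ is a unital operator space. Now I may legitimately invoke Theorem~\ref{thm: pp rep in C*-env}, which yields the unital complete isometry $j:\V\to C^*_e(\V)$ with $j(m(a,b))=j(a)j(b)$ for all $(a,b)\in\D$. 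To land in $\bB(\H)$, I would compose with a faithful unital $*$-representation $\sigma:C^*_e(\V)\to\bB(\H)$ (which is automatically a complete isometry); then $\pi:=\sigma\circ j$ is a unital complete isometry, and multiplicativity survives because $\sigma$ is a $*$-homomorphism, giving $\pi(m(a,b))=\sigma(j(a)j(b))=\pi(a)\pi(b)$.

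The only genuinely delicate point, and the step I expect to be the main obstacle, is this bootstrapping to unitality: the corollary deliberately weakens the standing hypothesis to an operator space with a unit \emph{vector}, whereas Theorem~\ref{thm: pp rep in C*-env} presupposes a unital operator space, so one cannot apply it blindly. The resolution is the elementary but essential observation that complete contractivity of $m$ descends to the trivial product, which then feeds Theorem~\ref{thm: Unit characterization}; once that is in place, the remainder is bookkeeping.
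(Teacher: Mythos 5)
Your proposal is correct and follows exactly the route the paper intends (the corollary is stated without a separate proof precisely because it is Theorem~\ref{thm: pp rep in C*-env} composed with a faithful unital $*$-representation of $C^*_e(\V)$, plus the routine converse computation you give). Your bootstrapping step --- restricting $m$ to the trivial product on $(\bC e \times \V) \cup (\V \times \bC e)$ and invoking $(2)\Rightarrow(1)$ of Theorem~\ref{thm: Unit characterization} to upgrade the unit-vector hypothesis to genuine unitality --- is left implicit in the paper, and you are right that it is needed to apply Theorem~\ref{thm: pp rep in C*-env} legitimately.
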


\begin{remark} \emph{Recall that when $\V$ is a unital operator space, the operator system $\V + \V^*$ is unique. Hence every unital operator space has a unique partially defined adjoint. In the case when $(a,b) \in \D$ and $a^*, b^*, m(a,b)^* \in V$, then since $m(a,b)^* = (ab)^* = b^*a^*$ in $C^*_e(\V)$, we can extend the domain $\D$ so that $(b^*,a^*) \in D$ and $m(b^*,a^*) = m(a,b)^*$. In particular, if $\V$ is an operator system, then whenever $(\D,m)$ is a partial multiplication and $(a,b), (b^*,a^*) \in \D$, it is necessarily true that $m(a,b)^* = m(b^*,a^*)$. Thus, if $(a,b) \in \D$ but $(b^*,a^*) \notin \D$, we can extend $\D$ to include $(b^*,a^*)$ by setting $m(b^*,a^*) = m(a,b)^*$ and the multiplication will still be completely contractive. In general, even if $(a,b) \in \D$ implies $\pi(a)\pi(b) \in \pi(\V)$ for some complete order embedding $\pi: \V \to \bB(\H)$, there may be other pairs $(x,y) \in (\V \times \V) \setminus \D$ such that $\pi(x)\pi(y) \in \pi(\V)$ as well. In other words, the set $\D$ need not be a maximal set of pairs whose product is also in $\V$.}
\end{remark}

We conclude this section by exhibiting some instances of partial products that are not completely contractive.

\begin{construction}[Maximal tensor product] \label{ex. Max tensor}
    \emph{Let $\S$ and $\T$ be operator systems with units $I_{\S}$ and $I_{\T}$ respectively, and consider $\V = \S \otimes_{max} \T$. Identifying $\S$ with $\S \otimes I_{\T}$ and $\T$ with $I_{\S} \otimes \T$ inside $\V$, we obtain a partial product satisfying $m(s,t)=m(t,s)=s \otimes t$ for all $s \in \S$ and $t \in \T$ (as defined for $\S \otimes_c \T$ in Construction \ref{const: c tensor}). Then $m$ is not completely contractive in general. This is because, by \cite{KAVRUK2011}, there exist operator systems $\S$ and $\T$ such that $\S \otimes_c \T \neq \S \otimes_{max} \T$. However, if $m$ were completely contractive, then by Corollary \ref{cor: abstract product characterization} there would exist a unital complete isometry $\pi: \S \otimes_{max} \T \to \bB(\H)$ such that $\pi(a \otimes b) = \pi(a) \pi(b) = \pi(b) \pi(a)$ for all $a \in \S$ and $b \in \T$. The existence of such a map would imply that $\S \otimes_{max} \T = \S \otimes_c \T$, a contradiction.}
\end{construction}

\begin{example}[Anticommutator] \label{ex: anticommutator}
    \emph{Let Let $\S \subseteq \bB(\H)$ be an operator system and consider $\V = \S^2 = \{ xy : x,y \in S \}$. Then $\V$ is an operator system. Let $\D = (\V \times \mathbb{C} I_{\H}) \cup (\mathbb{C}I_{\H} \times \V) \cup (\S \times \S)$. Then we can define a partial product $m: \D \to \V$ by $m(x,y) = \{x,y\} := \frac{1}{2}(xy + yx)$. We claim that this partial product is not completely contractive in general. If it were, then by Corollary \ref{cor: abstract product characterization} there would exist unital complete isometry $\pi: \V \to \bB(\H)$ such that $\pi(\{x,y\}) = \pi(x)\pi(y) = \pi(y) \pi(x)$ for all $x,y \in \S$. Consider the case when $\S = M_2$. Then it is known that $\V = \S^2 = \S$, but the product $m(x,y) = \{x,y\}$ is not associative. Hence, there exist $x_1, x_2, x_3 \in S$ such that $m(x_1,x_2), m(x_2,x_3) \in S$, but $m(m(x_1,x_2),x_3) \neq m(x_1, m(x_2,x_3))$. However, \[ \pi(m(m(x_1,x_2),x_3)) = \pi(x_1)\pi(x_2)\pi(x_3) = \pi(m(x_1,m(x_2,x_3))). \] 
    But this is not possible since $\pi$ is a complete isometry. So this partial product is not completely contractive in the case when $\S = M_2$.}
\end{example}

\section{Universal product C*-cover and product quotients} \label{s:universal-covers-quotients-prod}

In the previous section, we saw that if $\V$ is a unital operator space with partial product $m: \D \to \V$, then $j(m(a,b))= j(a)j(b)$, where $j: \V \to C^*_e(\V)$ is the canonical embedding. However, there may be other C*-covers which respect a given partial product. For example, if $\V = \S \otimes_h \T$, then the partial product defined by $m(s,t) = s \otimes_h t$ for all $s \in \S$ and $t \in \T$ is respected in both C*-covers $C^*_u(\S) *_1 C^*_u(\T)$ and $C^*_e(\S) *_1 C^*_e(\T) = C^*_e(\S \otimes_h \T)$. However, it is straitforward to check, using the universal properties of the universal C*-cover and the free product, that if $i: \V \to \A = C^*(\V)$ is a C*-cover such that $i(m(a,b))=i(a)i(b)$ for all $(a,b) \in \D$, then a map $\pi: C^*_u(\S) *_1 C^*_u(\T) \rightarrow \A$ satisfying $\pi(s \otimes t) = i(s)i(t)$ exists and is ucp. This shows that the C*-cover $C^*_u(\S) *_1 C^*_u(\T)$ is the ``largest'' C*-cover respecting the partial product on $\V$. 

In this section, we will show that every operator space with a completely contractive partial product admits a universal C*-cover which respects partial products. We will also show that the quotient of such an operator space by the kernel of a ucp map respecting the partial product also possesses a completely contractive partial product, and that this quotient space satisfies natural universal properties.

\begin{definition}
    Let $\V$ and $\W$ be unital operator spaces with partial products $m_{\V}: \D_{\V} \to \V$ and $m_{\W}: \D_{\W} \to \W$, respectively. A linear map $\varphi: \V \to \W$ is called a \textit{product map} if for any $(a,b) \in \D_{\V}$ we have $(\varphi(a),\varphi(b)) \in \D_{\W}$, and
    $\varphi(m_V(a,b)) = m_{\W}(\varphi(a), \varphi(b))$ for all $(a,b) \in \D_{\V}$. By a \textit{product C*-cover} for $\V$, we mean a pair $(\A,j)$ where $\A$ is a C*-algebra, and $j: \V \to \A$ is a unital completely isometric product map with $\D_{\A} = \A \times \A$ and $C^*(j(\V)) = \A$.
\end{definition}

By Theorem \ref{thm: pp rep in C*-env}, whenever $m:\D \to \V$ is a completely contractive product map, then $C^*_e(\V)$ is a product C*-cover for $\V$. 

\begin{theorem} \label{thm: c-star-m universal property}
    Let $\V$ be a unital operator space with completely contractive partial product $m:\D \to \V$. Then there exists a product C*-cover $(C^*_m(\V),i)$ satisfying the following universal property: for every product C*-cover $(\A,j)$ there exists a $*$-homomorphism $\pi: C^*_m(\V) \to \A$ satisfying $\pi(i(x)) = j(x)$ for all $x \in \V$.
\end{theorem}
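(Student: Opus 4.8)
The plan is to realize $C^*_m(\V)$ as a quotient of the universal C*-cover that forces the partial product to be implemented by genuine multiplication. Let $\kappa: \V \to C^*_u(\V)$ denote the canonical embedding, and let $\I \trianglelefteq C^*_u(\V)$ be the closed two-sided ideal generated by all ``defect'' elements
\[ \kappa(m(a,b)) - \kappa(a)\kappa(b), \qquad (a,b) \in \D. \]
I would set $C^*_m(\V) := C^*_u(\V)/\I$, write $q: C^*_u(\V) \to C^*_m(\V)$ for the quotient $*$-homomorphism, and define $i := q \circ \kappa$. Since $q$ is a unital surjective $*$-homomorphism and $C^*_u(\V) = C^*(\kappa(\V))$, it is immediate that $i$ is unital and that $C^*(i(\V)) = C^*_m(\V)$. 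Moreover, for each $(a,b) \in \D$ the generator $\kappa(m(a,b)) - \kappa(a)\kappa(b)$ lies in $\I$, so $i(m(a,b)) = q(\kappa(m(a,b))) = q(\kappa(a)\kappa(b)) = i(a)i(b)$; hence $i$ is a product map, the domain of the product on the C*-algebra $C^*_m(\V)$ being all of $C^*_m(\V) \times C^*_m(\V)$.

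The universal property is then essentially formal. Given any product C*-cover $(\A, j)$, the map $j: \V \to \A$ is unital and completely contractive, so by the universal property of $C^*_u(\V)$ it extends to a $*$-homomorphism $\tilde j: C^*_u(\V) \to \A$ with $\tilde j \circ \kappa = j$. Because $j$ is a product map, $\tilde j$ annihilates each defect generator,
\[ \tilde j(\kappa(m(a,b)) - \kappa(a)\kappa(b)) = j(m(a,b)) - j(a)j(b) = 0, \]
so $\I \subseteq \ker \tilde j$ and $\tilde j$ descends to a $*$-homomorphism $\pi: C^*_m(\V) \to \A$ with $\pi \circ q = \tilde j$. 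Then $\pi(i(x)) = \tilde j(\kappa(x)) = j(x)$ for all $x \in \V$, as required.

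The one substantive point, which I expect to be the main obstacle, is to verify that $i$ is completely isometric, i.e.\ that passing to the quotient by $\I$ does not collapse $\V$. Here I would invoke the existence of at least one product C*-cover on which $\V$ already embeds completely isometrically, namely the C*-envelope: by Theorem \ref{thm: pp rep in C*-env} the embedding $j_e: \V \to C^*_e(\V)$ satisfies $j_e(m(a,b)) = j_e(a)j_e(b)$ for all $(a,b) \in \D$, so $(C^*_e(\V), j_e)$ is a product C*-cover. Applying the factorization of the previous paragraph to this cover produces a $*$-homomorphism $\pi_e: C^*_m(\V) \to C^*_e(\V)$ with $\pi_e \circ i = j_e$. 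Since $\pi_e$ is completely contractive and $j_e$ is a complete isometry, for every $x \in M_n(\V)$ one has $\|x\| = \|j_e^{(n)}(x)\| = \|\pi_e^{(n)}(i^{(n)}(x))\| \leq \|i^{(n)}(x)\| \leq \|x\|$, the final inequality holding because $i = q \circ \kappa$ is a composition of complete contractions. Hence $\|i^{(n)}(x)\| = \|x\|$ for all $n$, so $i$ is a unital complete isometry. Combined with the first two paragraphs, this shows $(C^*_m(\V), i)$ is a product C*-cover enjoying the desired universal property.
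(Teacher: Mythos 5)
Your proposal is correct and takes essentially the same approach as the paper: both construct $C^*_m(\V)$ as the quotient of $C^*_u(\V)$ by the closed ideal generated by the defect elements $\kappa(m(a,b))-\kappa(a)\kappa(b)$ for $(a,b)\in\D$, obtain the universal property by extending a product-cover map $j$ to $C^*_u(\V)$ and noting it annihilates this ideal, and verify that $i$ is a unital complete isometry by the same squeeze argument through the product C*-cover $(C^*_e(\V),j_e)$ provided by Theorem \ref{thm: pp rep in C*-env}. If anything, your write-up makes the complete-isometry step slightly more explicit than the paper's, but the substance is identical.
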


\begin{proof}
    Let $(C^*_u(\V), k)$ denote the universal C*-cover for $\V$ with embedding $k: \V \to C^*_u(\V)$. Let $\I_m$ denote the ideal generated by the set $\{ k(x)k(y) - k(m(x,y)) : (x,y) \in \D \}$. We claim that $C^*_m(\V) := C^*_u(\V) / \I_m$ is a product C*-cover for $\V$ with embedding $j: \V \to C^*_m(\V)$ defined by $j(x) = k(x) + \I_m$. To see this, let $(\A,i)$ be a product C*-cover (e.g. we could take $\A = C^*_e(\V)$). Consider the quotient map $\pi: C^*_u(\V) \to \A$ satisfying $\pi(k(x))=\pi(i(x))$ (which exists and is unique by the universal property of $C^*_u(\V)$). Since $\A$ is a product C*-cover, $\pi(k(x)k(y)-k(m(x,y))) = i(x)i(y)-i(m(x,y)) = 0$ for all $(x,y) \in \D$. It follows that $\I_m \subseteq \ker(\pi)$. It follows that the map $j(x) = k(x) + \I_m \mapsto i(x)$ extends to a $*$-homomorphism $\pi': C^*_m(\V) \to \A$. If we let $\rho: C^*_u(\V) \to C^*_m(\V) = C^*_u(\V) / \I_m$ denote the quotient map, then we have that the restrictions $\rho: k(\V) \to j(\V)$ and $\pi': j(\V) \to i(\V)$ are all unital complete isometries. Therefore $j: \V \to C^*_m(\V)$ is a unital complete isometry. By the definition of $\I_m$, it is clear that $j:\V \to C^*_m(\V)$ is a product map. The universal property follows from the argument above, since there is at most one $*$-homomorphism $\pi': C^*_m(\V) \to \A$ satisfying $\pi'(j(x))=i(x)$ for a given product C*-cover $(\A,i)$.
\end{proof}

By the universal properties of the C*-covers $C^*_u(\V)$ and $C^*_e(\V)$, there exist $*$-homomorphisms $\rho: C^*_u(\V) \to C^*_m(\V)$ and $\pi: C^*_m(\V) \to C^*_e(\V)$ each extending the identity map on $\V$. In the following examples, we will see cases where we can write $C^*_m(\V)$ somewhat explicitly. While $C^*_m(\V)$ is generally distinct from $C^*_u(\V)$ and $C^*_e(\V)$, we see from these examples that it is possible for $C^*_m(\V) = C^*_u(\V)$ or for $C^*_m(\V) = C^*_e(\V)$.

\begin{construction}[Unital operator spaces]
    \emph{Suppose $\V$ is a unital operator space and $m$ is the trival product. Then $C^*_m(\V) = C^*_u(\V)$. This is because any C*-cover is a product C*-cover for $m$, so there is a unique $*$-homomorphism $\pi: C^*_m(\V) \to C^*_u(\V)$ extending the identity on $\V$. By the universal property of $C^*_u(\V)$, the identity on $V$ extends to a unique $*$-homomorphism $\rho: C^*_u(\V) \to C^*_m(\V)$ since $C^*_m(\V)$ is a C*-cover. It follows that $\rho = \pi^{-1}$ and thus $C^*_u(\V) = C^*_m(\V)$.}
\end{construction}

\begin{construction}[Unital operator algebra]
    \emph{Let $\A$ be a unital operator algebra. Then $C^*_m(\A) = C^*_{u-alg}(\A)$ where $C^*_{u-alg}(\A)$ denotes the universal C*-cover in the category or unital operator algebras. Like the previous example, this follows from the fact that $C^*_{u-alg}(\A)$ is a product C*-cover, inducing a $*$-homomorphism $\pi: C^*_m(\A) \to C^*_{u-alg}(\A)$; and by the universal property of $C^*_{u-alg}(\A)$, inducing a $*$-homomorphism $\rho: C^*_{u-alg}(\A) \to C^*_m(\A)$. The existence of $\rho$ is due to the observation that $C^*_m(\A)$ contains $\A$ as a unital subalgebra.}
\end{construction}

\begin{construction}[Haagerup tensor product]
    \emph{Let $\S$ and $\T$ be unital operator spaces and define $m$ on $\V = \S \otimes_h \T$ by $m(s,t) = s \otimes t$ for all $s \in \S$ and $t \in \T$. Suppose that $j: \V \to \A = C^*(\V)$ is a product C*-cover. Then $j(s \otimes t) = j(s)j(t)$ for all $s \in \S$ and $t \in \T$, and $\A$ is generated by $j(\S)$ and $j(\T)$. By the universal properties of $C^*_u(\S)$ and $C^*_u(\T)$, there exists unital $*$-homomorphisms $\rho_1: C^*_u(\S) \to C^*(j(\S))$ and $\rho_2: C^*_u(\T) \to C^*(j(\T))$. By the universal properties of the free product, there exists a $*$-homomorphism $\pi = \rho_1 * \rho_2: C^*_u(\S) *_1 C^*_u(\T) \to A$ extending $\rho_1$ and $\rho_2$. Evidently $\pi$ is surjective. Since $C^*_u(\S) *_1 C^*_u(\T)$ is a C*-cover, we conclude that $C^*_m(\V) = C^*_u(\S) *_1 C^*_u(\T)$. By Proposition \ref{p:not-prod-cov}, we see that the canonical surjections $C^*_u(\V) \rightarrow C^*_m(\V)$ and $C^*_m(\V) \rightarrow C^*_e(\V)$ are not injective in general.}
\end{construction}

\begin{construction}[Commuting tensor product]
    \emph{Let $\S$ and $\T$ be operator systems and define $m$ on $\V = \S \otimes_{max} \T$ by $m(s,t) = m(t,s)= s \otimes t$ for all $s \in \S$ and $t \in \T$. Suppose that $j: \V \to \A = C^*(\V)$ is a product C*-cover. Then $j(s \otimes t) = j(s)j(t)=j(t)j(s)$ for all $s \in \S$ and $t \in \T$, and $\A$ is generated by $j(\S)$ and $j(\T)$. By the universal properties of $C^*_u(\S)$ and $C^*_u(\T)$, there exists unital $*$-homomorphisms $\rho_1: C^*_u(\S) \to C^*(j(\S))$ and $\rho_2: C^*_u(\T) \to C^*(j(\T))$. By the universal properties of the maximal C*-tensor product, there exists a $*$-homomorphism $\pi = \rho_1 \otimes \rho_2: C^*_u(\S) \otimes_{max} C^*_u(\T) \to \A$ extending $\rho_1$ and $\rho_2$. Evidently $\pi$ is surjective. Since $C^*_u(\S) \otimes_{max} C^*_u(\T)$ is a C*-cover, we conclude that $C^*_m(\V) \cong C^*_u(\S) \otimes_{max} C^*_u(\T)$. By Remark \ref{rmk: commuting universal cover does not commmute}, we see that $C^*_u(\V) \neq C^*_m(\V)$ and $C^*_m(\V) \neq C^*_e(\V)$ in this case as well.}
\end{construction}

\begin{construction}[Minimal tensor product]
    \emph{Let $\S$ and $\T$ be operator systems and define $m$ on $\V = \S \otimes_{min} \T$ by $m(s,t) = m(t,s) = s \otimes t$ for all $s \in \S$ and $t \in \T$. Suppose that $j: \V \to \A = C^*(\V)$ is a product C*-cover. Let $\A_1 = C^*(j(\S))$ and $\A_2 = C^*(j(\T))$. Then $\A_1 \otimes_{min} \A_2$ is also a product C*-cover for $\S \otimes_{min} \T$, by the injectivity of the minimal tensor product \cite{KAVRUK2011}. Let $\B = C^*_m(\S \otimes_{min} \T)$, and let $\B_1 = C^*(\S) \subseteq \B$ and $\B_2 = C^*(\T) \subseteq \B$. If $\rho: \B \to \A$ is the canonical $*$-homomorphism extending the identity map on $\S \otimes_{min} \T$, then the restriction of $\rho$ to $\B_i$ induces a $*$-homomorphism $\rho_i: \B_i \to \A_i$ for $i=1,2$. Since $C^*_u(\S) \otimes_{min} C^*_u(\T)$ is a C*-cover for $\S \otimes_{min} \T$, we conclude that $\B_1 = C^*_u(\S)$ and $\B_2 = C^*_u(\T)$. It follows that $C^*_m(\S \otimes_{min} \T) = C^*_u(\S) \otimes_{\tau} C^*_u(\T)$, where $\otimes_{\tau}$ is some C*-tensor product. We do not know if $C^*_m(\S \otimes_{min} \T) = C^*_u(\S) \otimes_{min} C^*_u(\T)$ in general, even though $C^*_u(\S) \otimes_{min} C^*_u(\T)$ is always a product C*-cover. For instance, it may be the case that $\S \otimes_{min} \T = \S \otimes_c \T$ (e.g. if $\S$ is (min,c)-nuclear), in which case $C^*_u(\S) \otimes_{max} C^*_u(\T)$ is a product C*-cover for $\S \otimes_{min} \T$ by \cite[Theorem 6.4]{KAVRUK2011} and therefore $C^*_m(\S \otimes_{min} \T) \cong C^*_u(\S) \otimes_{max} C^*_u(\T)$. We do not know if $\S \otimes_{min} \T = \S \otimes_c \T$ implies $C^*_u(\S) \otimes_{max} C^*_u(\T) \cong C^*_u(\T) \otimes_{min} C^*_u(\T)$. However, we can conclude that if $C^*_m(\S \otimes_{min} \T) \neq C^*_u(\S) \otimes_{max} C^*_u(\T)$, then $\S \otimes_{min} \T \neq \S \otimes_c \T$.}
\end{construction}

\begin{example}[Free unitary system]
    \emph{Let $\S_n = \text{span} \{e, u_1, \dots, u_n, u_1^*, \dots, u_n^*\} \subseteq C^*(\mathbb{F}_n)$ as in Example \ref{ex: free unitary}. Let $j: \S_n \to \A = C^*(\S_n)$ be a product C*-cover. Then because $j(u_i)j(u_i)^* = j(u_i)^*j(u_i) = I$, we get that $j(u_i)$ is a unitary for each $i=1,\dots,n$. It follows that the mapping $u_i \mapsto j(u_i)$ extends to a $*$-homomorphism $\pi: C^*(\mathbb{F}_n) \to \A$. Since $C^*(\mathbb{F}_n)$ is a product C*-cover, we deduce that $C^*_m(S_n) = C^*(\mathbb{F}_n)$. By \cite{FKPTgroups2014}, we have $C^*_e(\S_n) = C^*(\mathbb{F}_n)$ as well, so in our case $C^*_m(\S_n) \cong C^*_e(\S_n)$. On the other hand, the image of $u_i$ in $C^*_u(\S_n)$ is not a unitary since its spectrum must be the entire unit ball of $\mathbb{C}$. Therefore $C^*_u(\S_n) \neq C^*_m(\S_n)$.}
\end{example}

\begin{example}[Cuntz system]
    \emph{Let $\V_n = \text{span} \{I, s_1, \dots, s_n, s_1^*, \dots, s_n^*, p_1, \dots, p_n\}$ as in Example \ref{ex: Cuntz system}. Then $C^*_e(\V_n) \cong \O_n$. Using the universal properties of the Cuntz algebra, then (as in the previous example) we have $C^*_m(\V_n) = \O_n = C^*_e(\V_n)$. However, $C^*_u(\V_n) \neq C^*_e(\V_n)$, since the image of each non-zero projection $p_i$ in $C^*_u(\V_n)$ has spectrum equal to the closed unit interval $[0,1]$. So $C^*_u(\V_n) \neq C^*_m(\V_n)$.}
\end{example}

\begin{definition}
    Let $V$ be a unital operator space with a completely contractive partial product $m: \D \to \V$. A subspace $\K \subseteq \V$ is called a \textit{product kernel} if it is the kernel of a ucc product map.
\end{definition}

Recall that if $\V$ is an operator system and $\K \subseteq \V$ is the kernel of a ucp map, then there exists a matrix ordering on $\V / \K$ making it an operator system with unit $e+\K$. This operator system satisfies the following universal property: if $\phi: \V \to \bB(\H)$ is a ucp map such that $\K \subseteq \ker(\phi)$, then the map defined by $x + \K \mapsto \phi(x)$ is a well-defined ucp map (i.e. $\phi$ factors through the quotient $\V / \K$). We will show that when $\V$ is a unital operator space and $\K$ is a \emph{product} kernel, then there exists a product unital operator space structure on $\V / \K$ satisfying an analogous universal property for product maps. We call $\V / \K$ with this structure a \textit{product quotient} and denote it by $(\V / \K)_m$.

\begin{theorem}
    Let $\V$ be a unital operator space and $m: \D \to \V$ a completely contractive partial product. Then there exists an operator space structure on $\V / \K$ such that the partial product defined by \[ m(x + \K, y + \K) = m(x,y) + \K, \quad (x,y) \in \D \] is completely contractive. Moreover, if $\psi: \V \to \bB(\H)$ is a ucc product map with $\K \subseteq \ker(\psi)$, then the map $x + \K \mapsto \psi(x)$ is a ucc product map, i.e. $\psi$ factors through $(\V / \K)_m$ as a product map.
\end{theorem}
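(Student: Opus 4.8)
The plan is to equip $\V/\K$ with the matrix norms
\[ \|x + \K\|_m := \sup\{\|\psi^{(n,m)}(x)\| : \psi:\V\to\bB(\H) \text{ is a ucc product map with } \K\subseteq\ker(\psi)\}, \quad x\in M_{n,m}(\V), \]
in direct analogy with the norm $\|\cdot\|^{usp}$ on unital operator space quotients recalled in Section \ref{s:prelim}. Since $\K$ is a \emph{product} kernel, there is at least one ucc product map $\phi_0$ with $\ker(\phi_0)=\K$, so the supremum is over a nonempty family; and since every ucc map is a complete contraction, $\|\psi^{(n,m)}(x)\|\le\|x\|$, so the supremum is finite. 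First I would record that $\{\|\cdot\|_m\}$ is a unital $\mathrm{L}^\infty$-matrix norm: each $x\mapsto\|\psi^{(n,m)}(x)\|$ is an $\mathrm{L}^\infty$-matrix seminorm (the pullback of the matrix norm on $\bB(\H)$ along the completely contractive $\psi$), and both Ruan axioms, being expressed as (in)equalities stable under suprema, pass to $\|\cdot\|_m$. It is genuinely a norm, not merely a seminorm, precisely because $\phi_0$ participates in the supremum: if $\|x+\K\|_m=0$ then $\phi_0(x)=0$, whence $x\in\ker(\phi_0)=\K$. Unitality of every $\psi$ forces $\|e+\K\|_m=1$.

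Next I would verify that the prescription $\bar m(x+\K,y+\K):=m(x,y)+\K$ is well defined on the quotient domain $\bar\D:=\{(x+\K,y+\K):(x,y)\in\D\}$; this is the step where the \emph{product} kernel hypothesis is indispensable. If $(x,y),(x',y')\in\D$ represent the same pair of cosets, then $\phi_0(x)=\phi_0(x')$ and $\phi_0(y)=\phi_0(y')$, so $\phi_0(m(x,y))=\phi_0(x)\phi_0(y)=\phi_0(x')\phi_0(y')=\phi_0(m(x',y'))$ using that $\phi_0$ is a product map; hence $m(x,y)-m(x',y')\in\ker(\phi_0)=\K$ and $\bar m$ is well defined. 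The domain axioms and bilinearity of $\bar m$ are inherited coordinatewise from $(\D,m)$, and $\bar m(e+\K,\,\cdot\,)=\bar m(\,\cdot\,,e+\K)=\mathrm{id}$ shows that $\bar m$ is unital.

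To see $\bar m$ is completely contractive, and simultaneously that $(\V/\K,\|\cdot\|_m)$ is a unital operator space, I would pass to a concrete realization. Choosing a representing set of ucc product maps $\psi$ vanishing on $\K$ large enough to contain a map with kernel exactly $\K$ (restricting to Hilbert spaces of bounded cardinality removes the set-theoretic issue), form $\Psi=\bigoplus_\psi\psi$. Then $\Psi$ is a ucc product map with $\ker(\Psi)=\K$, so it factors as a unital map $\bar\Psi:\V/\K\to\bB(\H)$ which is a complete isometry for $\|\cdot\|_m$ by construction (the norm of a direct sum is the supremum of the norms) and satisfies $\bar\Psi(\bar m(\bar a,\bar b))=\bar\Psi(\bar a)\bar\Psi(\bar b)$. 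Corollary \ref{cor: abstract product characterization} then certifies that $(\bar\D,\bar m)$ is a completely contractive partial product on $(\V/\K,e+\K)$. Finally, the universal property is immediate from the definition of the norm: for any ucc product map $\psi:\V\to\bB(\H)$ with $\K\subseteq\ker(\psi)$, the induced map $\bar\psi(x+\K):=\psi(x)$ is well defined and unital, is completely contractive since $\|\bar\psi^{(n,m)}(x+\K)\|=\|\psi^{(n,m)}(x)\|\le\|x+\K\|_m$, and is a product map because $\bar\psi(\bar m(\bar a,\bar b))=\psi(m(a,b))=\psi(a)\psi(b)=\bar\psi(\bar a)\bar\psi(\bar b)$.

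I expect the main obstacle to be the well-definedness of $\bar m$ together with the separation of points by $\|\cdot\|_m$; both rest entirely on the existence of a single ucc product map with kernel exactly $\K$, that is, on $\K$ being a \emph{product} kernel rather than an arbitrary ucc kernel. Once that is in hand, complete contractivity of $\bar m$ is a formal consequence of Corollary \ref{cor: abstract product characterization} via the direct-sum realization, and the universal property drops out of the defining supremum.
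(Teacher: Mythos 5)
Your proof is correct, but it takes a genuinely different route from the paper's. The paper proceeds through the universal product C*-cover: since $\K = \ker(\varphi)$ for a ucc product map $\varphi$, the induced $*$-homomorphism $\pi_\varphi : C^*_m(\V) \to \bB(\H)$ forces $\langle \K \rangle \cap \V = \K$, so $x + \K \mapsto j(x) + \langle \K \rangle$ embeds $\V/\K$ into the C*-algebra $C^*_m(\V)/\langle \K \rangle$; the quotient is given the inherited operator space structure, complete contractivity of the induced product is then automatic because it is the restriction of an honest C*-multiplication, and the factorization property follows by extending any ucc product map $\psi$ to $\pi_\psi$ on $C^*_m(\V)$, which annihilates $\langle \K \rangle$. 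You instead define the quotient norm intrinsically as a supremum over all ucc product maps annihilating $\K$ (mirroring the $\|\cdot\|^{usp}$ construction from the preliminaries), check Ruan's axioms, well-definedness of the induced product, and faithfulness via a fixed $\phi_0$ with $\ker(\phi_0) = \K$, and then certify complete contractivity through the direct sum $\Psi$ together with Corollary \ref{cor: abstract product characterization}. The two constructions yield the same matrix norms: every ucc product map killing $\K$ factors through $C^*_m(\V)/\langle \K \rangle$ by Theorem \ref{thm: c-star-m universal property}, and conversely a faithful representation of $C^*_m(\V)/\langle \K \rangle$ restricts to a ucc product map on $\V$ vanishing on $\K$, so your supremum is attained by the paper's embedding. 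What each buys: the paper's argument is shorter and delivers Corollary \ref{cor: quotient C*-cover} (the completely isometric inclusion $(\V/\K)_m \subseteq C^*_m(\V)/\langle \K \rangle$) as a byproduct, which is then used directly in Theorems \ref{t:product-proj-haagerup} and \ref{t:commutig-is-haagerup-prod-quotient}; your argument is more self-contained, makes explicit that the quotient norm is the largest one admitting the factorization property, and isolates precisely where the product-kernel hypothesis enters (well-definedness of the quotient product and separation of points). One small point of care in your direct-sum step: to justify restricting to Hilbert spaces of bounded cardinality, cut each $\psi$ down to a \emph{reducing} subspace for $C^*(\psi(\V))$ rather than compressing to an arbitrary subspace, since compressions of product maps need not remain product maps, whereas restrictions to reducing subspaces do.
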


\begin{proof}
    Since $\K = \ker(\varphi)$ where $\varphi: \V \to \bB(\H)$ is a product map, the unique induced $*$-homomorphism $\pi_{\varphi}: C^*_m(\V) \to \bB(\H)$ satisfies $\langle \K \rangle = \ker(\pi_{\varphi})$. It follows that $\langle \K \rangle \cap \V = \K$, and therefore the map sending $\V / \K$ into $C^*_m(\V) / \langle \K \rangle$ given by $x + \K \mapsto j(x) + \langle \K \rangle$ is injective.

    Endow $\V / \K$ with the operator space structure making the inclusion $\V / \K \subseteq C^*_m(\V) / \langle \K \rangle$ completely isometric. Then the partial product defined by $m(x + \K, y + \K) := m(x,y) + \K$ for all $(x,y) \in \D$ is completely contractive, since it agrees with the product in the C*-algebra $C^*_m(\V) / \langle \K \rangle$.

    Now suppose that $\psi: \V \to \bB(\H)$ is a ucc product map and $\K \subseteq \ker(\psi)$. Then $\psi$ extends to a $*$-homomorphism $\pi_{\psi}: C^*_m(\V) \to \bB(\H)$. Since $\K \subseteq \ker(\psi)$, $\langle \K \rangle \subseteq \ker(\pi_{\psi})$. It follows that the restriction $\psi'$ of $\pi_{\psi}$ to $\V / \K$ is a ucc product map, and $\psi'(x + \K) = \pi_{\psi}(x + \langle \K \rangle) = \psi(x)$.
\end{proof}

The following was shown in the proof above, but we state it separately for visibility. Recall that when $S$ is an operator system and $\J \subseteq \S$ is a kernel, then the inclusion of $\S / \J$ into $C^*_u(\S)/\langle \J \rangle$ is a complete isometry \cite{Kavruk}. A similar situation occurs in the case of product quotients.

\begin{corollary} \label{cor: quotient C*-cover}
    Let $\V$ be a unital opreator space, $m:\D \to \V$ a completely contractive partial product, and $\K \subseteq \V$ the kernel of a ucc product map. Then the inclusion of $(\V / \K)_m$ into $C^*_m(\V) / \langle \K \rangle$ is a complete isometry.
\end{corollary}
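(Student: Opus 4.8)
The plan is to show that the two matrix norms on $\V/\K$ coincide: the intrinsic product-quotient norm carried by $(\V/\K)_m$, and the one inherited as a subspace of the C*-algebra $C^*_m(\V)/\langle\K\rangle$. Write $j:\V \to C^*_m(\V)$ for the canonical product embedding and $q: C^*_m(\V) \to C^*_m(\V)/\langle\K\rangle$ for the quotient $*$-homomorphism, and recall that, in analogy with the quotient norm defining $(\V/\K)_{usp}$, the matrix norm on $(\V/\K)_m$ is the product-quotient seminorm
\[ \|x+\K\|_m = \sup\{\|\psi^{(n)}(x)\| : \psi:\V \to \bB(\H) \text{ a ucc product map with } \K \subseteq \ker\psi\}, \quad x \in M_n(\V). \]
The map in question is $\iota: (\V/\K)_m \to C^*_m(\V)/\langle\K\rangle$ given by $x+\K \mapsto q(j(x))$, and the goal is to prove $\|x+\K\|_m = \|q^{(n)}(j^{(n)}(x))\|$ for every $n$ and every $x \in M_n(\V)$.

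First I would verify that $\iota$ is well defined and injective, which amounts to the structural identity $\V \cap \langle\K\rangle = \K$. This is the one point where the hypothesis that $\K$ is a product kernel genuinely enters, and it is where I expect the main (if modest) obstacle to lie. Writing $\K = \ker\varphi$ for a ucc product map $\varphi:\V\to\bB(\H)$ and extending $\varphi$ to a $*$-homomorphism $\pi_\varphi: C^*_m(\V) \to \bB(\H)$ via the universal property of Theorem~\ref{thm: c-star-m universal property}, one has $\K \subseteq \ker\pi_\varphi$ and hence $\langle\K\rangle \subseteq \ker\pi_\varphi$. Thus any $x \in \V \cap \langle\K\rangle$ satisfies $\varphi(x)=\pi_\varphi(j(x))=0$, forcing $x \in \K$; the reverse inclusion is immediate. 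This guarantees that the product-quotient seminorm is an honest norm on $\V/\K$ and that $\iota$ is an injection onto a subspace of the C*-quotient.

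For the inequality $\|x+\K\|_m \geq \|q^{(n)}(j^{(n)}(x))\|$, the point is simply that $q\circ j$ is itself an admissible competitor in the supremum defining $\|\cdot\|_m$: the embedding $j$ is a ucc product map by construction of $C^*_m(\V)$, the map $q$ is a unital $*$-homomorphism and therefore a completely contractive product map, and $q\circ j$ annihilates $\K$ since $\K \subseteq \langle\K\rangle = \ker q$. Hence $q\circ j$ is one of the maps $\psi$ over which the supremum is taken, and this inequality follows at once.

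For the reverse inequality, I would take an arbitrary ucc product map $\psi:\V \to \bB(\H)$ with $\K \subseteq \ker\psi$, extend it to a $*$-homomorphism $\pi_\psi: C^*_m(\V) \to \bB(\H)$ using Theorem~\ref{thm: c-star-m universal property}, and observe that $\K \subseteq \ker\psi$ forces $\langle\K\rangle \subseteq \ker\pi_\psi$, so that $\pi_\psi$ descends to a $*$-homomorphism $\bar\psi: C^*_m(\V)/\langle\K\rangle \to \bB(\H)$ with $\psi = \bar\psi \circ q \circ j$. Since $*$-homomorphisms are completely contractive, $\|\psi^{(n)}(x)\| = \|\bar\psi^{(n)}(q^{(n)}(j^{(n)}(x)))\| \leq \|q^{(n)}(j^{(n)}(x))\|$, and taking the supremum over all such $\psi$ yields $\|x+\K\|_m \leq \|q^{(n)}(j^{(n)}(x))\|$. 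Combining the two inequalities gives equality for all $n$, so $\iota$ is a complete isometry. Apart from the identity $\V \cap \langle\K\rangle = \K$, every step is a direct application of the universal property of $C^*_m(\V)$ together with the complete contractivity of $*$-homomorphisms.
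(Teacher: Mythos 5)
Your proof is correct, but it runs along a genuinely different axis than the paper's. In the paper, the corollary is definitional: the theorem immediately preceding it \emph{defines} the operator space structure on $(\V/\K)_m$ as the one making the inclusion $\V/\K \subseteq C^*_m(\V)/\langle\K\rangle$ completely isometric (after first checking injectivity via $\V \cap \langle\K\rangle = \K$, exactly as you do), and then verifies the universal factorization property for ucc product maps annihilating $\K$; the corollary merely restates the embedding for visibility. You instead posit the intrinsic supremum norm
\[ \|x+\K\|_m = \sup\{\|\psi^{(n)}(x)\| : \psi \text{ a ucc product map, } \K \subseteq \ker\psi\} \]
as the definition, by analogy with $(\V/\K)_{usp}$, and prove the two matrix norms coincide by a standard two-inequality argument: $q\circ j$ (concretized by a faithful representation) is an admissible competitor, and every competitor factors through $C^*_m(\V)/\langle\K\rangle$ via the universal property of $C^*_m(\V)$. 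This is logically equivalent to the paper's content --- your two inequalities are precisely the universal property plus the observation that the subspace norm is attained by a product representation --- but it buys something the paper leaves implicit: an intrinsic, representation-theoretic formula for the product quotient norm, showing that the ``extrinsic'' C*-quotient definition and the natural supremum definition agree. One small point in your favor: where the paper asserts the overly strong equality $\langle\K\rangle = \ker(\pi_\varphi)$ (which is not justified for an arbitrary product kernel and is not needed), you correctly use only the inclusion $\langle\K\rangle \subseteq \ker(\pi_\varphi)$ to deduce $\V \cap \langle\K\rangle = \K$, which is all the argument requires.
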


\begin{remark}
    \emph{As discussed in \cite{Kavruk}, it is non-trivial to determine when a subspace $\K$ of an operator system $\S$ is the kernel of a ucp map. Likewise, it appears to be difficult to determine when a kernel $\K \subseteq \V$ is a product kernel. Although we do not have an intrinsic characterization of such kernels, some natural necessary conditions can be deduced. For example, if $(x,y) \in \D$ and $y \in \K$ or $x\in \K$, then necessarily $m(x,y) \in \K$, i.e. $\K$ behaves partly like an ideal in an algebra. While this property is strong enough to induce a partial product on the quotient $\V / \K$, it is not clear if the induced product is automatically completely contractive.}
\end{remark}

We conclude this section with some applications of product quotients. First, if $\V$ is an operator system and $m$ denotes the trivial product, and if $\K \subseteq \V$ is the kernel of a ucp map then it is necessarily a product kernel with respect to $m$. Moreover, $(\V / \K)_m$ is the operator system quotient, since product maps are just ucp maps in this case. This can also be seen by \cite{Kavruk}, since $C^*_m(\V / \K) \cong C^*_m(\V)/\langle \K \rangle = C^*_u(\V) / \langle \K \rangle$. 

Second, if we denote by $m$ the natural partial product on the Haagerup tensor product $\S \otimes_h \T$ of unital operator spaces $\S$ and $\T$, the following establishes product projectivity of Haagerup tensor product.

\begin{theorem} \label{t:product-proj-haagerup}
    Let $\S$ and $\T$ be unital operator spaces, $\J \subseteq \S$ and $\K \subseteq \T$ kernels, and let $q_1: \S \to \S / \J$ and $q_2: \T \to \T / \K$ denote the complete quotient maps. Let $\L = \J \otimes \T + \S \otimes \K$. Then the product map $q: (\S \otimes_h \T / \L)_m \to (\S / \J) \otimes_h (\T / \K)$, defined by $q(s \otimes t + \L) = (s+\J) \otimes_h (t + \K)$ on elementary tensors and extended linearly, is a completely isometry.
\end{theorem}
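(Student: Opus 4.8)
The plan is to realize both the domain and the codomain of $q$ as \emph{the same} subspace of one common C*-algebra, namely the free product $C^*_u(\S/\J) *_1 C^*_u(\T/\K)$, and then observe that $q$ is nothing but the identification between these two copies. First I would record, from the computation of $C^*_m$ for a Haagerup tensor product carried out in Section \ref{s:universal-covers-quotients-prod}, that $C^*_m(\S \otimes_h \T) = C^*_u(\S) *_1 C^*_u(\T)$, with $\S \otimes_h \T$ sitting inside via $s \otimes t \mapsto k_1(s)k_2(t)$ (where $k_1,k_2$ are the universal covers' embeddings).

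Next I would check that $\L$ really is a product kernel, so that $(\S \otimes_h \T / \L)_m$ is defined. Consider the unital completely contractive product map $\Phi \colon \S \otimes_h \T \to C^*_u(\S/\J) *_1 C^*_u(\T/\K)$ determined by $\Phi(s \otimes t) = \overline{s}\cdot\overline{t}$, where $\overline{s},\overline{t}$ denote the images in the quotient covers. It factors as $\Phi = \Psi \circ (q_1 \otimes_h q_2)$, where $q_1 \otimes_h q_2 \colon \S \otimes_h \T \to (\S/\J)\otimes_h(\T/\K)$ is the induced unital complete contraction and $\Psi$ is the embedding of Proposition \ref{prop: H-tensor embeds in free prodcut}, which is injective. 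Hence $\ker \Phi = \ker(q_1 \otimes_h q_2)$; and since the underlying linear map is the algebraic tensor product of two surjections with kernels $\J$ and $\K$, a standard linear-algebra fact gives $\ker(q_1\otimes_h q_2) = \J \otimes \T + \S \otimes \K = \L$. Thus $\L = \ker \Phi$ is a product kernel, and by Corollary \ref{cor: quotient C*-cover} the space $(\S \otimes_h \T/\L)_m$ embeds completely isometrically into $C^*_m(\S\otimes_h\T)/\langle \L\rangle = (C^*_u(\S)*_1 C^*_u(\T))/\langle\L\rangle$ via $s\otimes t + \L \mapsto k_1(s)k_2(t) + \langle\L\rangle$.

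The technical heart is then two C*-algebraic identifications. Using $I_\T \in \T$ and $I_\S \in \S$ one checks $k_1(\J), k_2(\K) \subseteq \langle\L\rangle$ while every generator of $\L$ lies in $\langle k_1(\J),k_2(\K)\rangle$, so $\langle \L\rangle = \langle k_1(\J), k_2(\K)\rangle$. I would then invoke the free-product-of-quotients identity $(A *_1 B)/\langle I, J\rangle \cong (A/I) *_1 (B/J)$, a routine universal-property argument, to obtain $(C^*_u(\S)*_1 C^*_u(\T))/\langle\L\rangle \cong (C^*_u(\S)/\langle\J\rangle) *_1 (C^*_u(\T)/\langle\K\rangle)$. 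Finally, the unital-operator-space form of Kavruk's quotient theorem \cite{Kavruk} --- obtained by passing to $\S + \S^*$ and using that $\langle\J+\J^*\rangle = \langle\J\rangle$ since ideals are self-adjoint --- yields $C^*_u(\S)/\langle\J\rangle \cong C^*_u(\S/\J)$ and likewise for $\T$. Composing all of this, $(\S\otimes_h\T/\L)_m$ embeds completely isometrically into $C^*_u(\S/\J)*_1 C^*_u(\T/\K)$ via $L \colon s\otimes t + \L \mapsto \overline{s}\cdot\overline{t}$.

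To conclude, let $R$ denote the completely isometric embedding of $(\S/\J)\otimes_h(\T/\K)$ into the same free product provided by Proposition \ref{prop: H-tensor embeds in free prodcut}, so $R(\overline{s} \otimes_h \overline{t}) = \overline{s} \cdot \overline{t}$. Both $L$ and $R$ have image $\Span\{\overline{s}\cdot\overline{t}\}$, and for each elementary tensor $R(q(s\otimes t + \L)) = R(\overline{s} \otimes_h \overline{t}) = \overline{s}\cdot\overline{t} = L(s\otimes t + \L)$, so $R\circ q = L$ by linearity. Since $L$ is a complete isometry and $R$ is a complete isometry onto its range, $q = R^{-1}\circ L$ is a complete isometry. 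I expect the main obstacle to be the bookkeeping in the two C*-algebra identifications --- in particular formulating the unital version of Kavruk's quotient theorem and verifying $\langle\L\rangle = \langle k_1(\J),k_2(\K)\rangle$ --- rather than the final matching, which is formal once the covers have been identified.
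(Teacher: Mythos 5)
Your proposal is correct and follows essentially the same route as the paper's proof: both identify $\L$ as the kernel of the ucc product map $q_1 \otimes_h q_2$, embed $(\S \otimes_h \T/\L)_m$ completely isometrically into $C^*_m(\S\otimes_h\T)/\langle\L\rangle \cong (C^*_u(\S) *_1 C^*_u(\T))/\langle \J+\K\rangle$ via Corollary \ref{cor: quotient C*-cover}, and then use Kavruk's identification $C^*_u(\S/\J)\cong C^*_u(\S)/\langle\J\rangle$ together with the free-product-of-quotients isomorphism to land in $C^*_u(\S/\J)*_1 C^*_u(\T/\K)$. Your explicit final matching of the two embeddings $L$ and $R$ simply spells out what the paper leaves implicit in its concluding sentence.
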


\begin{proof}
First, note that the complete quotient maps $q_1: \S \to \S / \J$ and $q_2: \T \to \T / \K$ extend to a ucc map $q_1 \otimes q_2: \S \otimes_h \T \to (\S / \J) \otimes_h (\T / \K)$ with kernel $\L = \J \otimes \T + \S \otimes \K$. Now, since $q_1 \otimes q_2$ is a product map, $\L$ is a product kernel. By Corollary \ref{cor: quotient C*-cover}, we know the inclusion
\[ (\S \otimes_h \T / \L)_m \to C^*_m(\S \otimes_h \T)/\langle \L \rangle \cong C^*_u(\S) *_1 C^*_u(\T) / \langle \L \rangle \]
is a unital complete isometry. Since $C^*_u(\S / \J) \cong C^*_u(\S) / \langle \J \rangle$ and $C^*_u(\T / \K) \cong C^*_u(\T) / \langle \K \rangle$, we have a $*$-homomorphism $\pi: C^*_u(\S) *_1 C^*_u(\T) \to C^*_u(\S / \J) *_1 C^*_u(\T / \K)$, namely $\pi_1 * \pi_2$ where the $\pi_i$'s are quotient maps. Since $\L \subseteq \ker(\pi)$, this induces a $*$-homomorphism from $C^*_m(\S \otimes_h \T) / \langle \L \rangle \to C^*_u(\S / \J) *_1 C^*_u(\T / \K)$. However
\[ C^*_u(\S / \J) *_1 C^*_u(\T / \K) \cong (C^*_u(\S) / \langle \J \rangle) *_1 (C^*_u(\T) / \langle \K \rangle) \cong C^*_u(\T) *_1 C^*_u(\S) / \langle \J + \K \rangle. \]
Because $\langle \L \rangle = \langle \J + \K \rangle$, the proof is concluded.
\end{proof}

\begin{remark}
  \emph{If $\S$ and $\T$ are unital operator spaces with kernel $\J$ and $\K$, respectively, we do not know if the unital operator space quotient satisfies $((\S \otimes_h \T)/(\S \otimes \K + \J \otimes \T))_{usp} \cong (\S / \J) \otimes_h (\T / \K)$ as unital operator spaces. In other words, we do not know if the Haagerup tensor product is projective in the category of unital operator spaces.}  
\end{remark}

\begin{theorem} \label{t:commutig-is-haagerup-prod-quotient}
    let $\S$ and $\T$ be operator systems, and let $\V = \S \otimes_h \T + \T \otimes_h \S$ denote the operator system generated by the image of $\S \otimes_h \T$ in $C^*_u(\S) *_1 C^*_u(\T)$. Let \[ [\S,\T] := \text{span}\{s \otimes t - t \otimes s : s \in \S, t \in \T \}. \] Then
    \[ ((\S \otimes_h \T + \T \otimes_h \S) / [\S,\T])_m \cong \S \otimes_c \T \]
    via the mapping defined by $s \otimes t + t' \otimes s' + [\S,\T] \mapsto s \otimes t + s' \otimes t'$ on elementary tensors and extended linearly.
\end{theorem}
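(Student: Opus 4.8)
The plan is to establish the isomorphism at the level of C*-covers and then descend to the operator systems via Corollary~\ref{cor: quotient C*-cover}. Throughout, regard $\V = \S \otimes_h \T + \T \otimes_h \S$ inside $C^*_u(\S) *_1 C^*_u(\T)$, equipped with the partial product $m(s,t)=st$ and $m(t,s)=ts$ for $s\in\S$, $t\in\T$ (this is completely contractive by Corollary~\ref{cor: abstract product characterization}, since it is realized by composition in the free product). First I would identify $C^*_m(\V)$. Exactly as in the Haagerup computation following Construction~\ref{const: H tensor}, the pair $(C^*_u(\S) *_1 C^*_u(\T),\iota)$ is a product C*-cover for $\V$; and given any product C*-cover $(\A,j)$, the restrictions $j|_{\S}$ and $j|_{\T}$ are ucc and hence extend to $*$-homomorphisms $C^*_u(\S)\to\A$ and $C^*_u(\T)\to\A$, which combine through the universal property of the free product to a $*$-homomorphism $C^*_u(\S) *_1 C^*_u(\T)\to\A$ restricting to $j$ on $\V$. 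By the universal property in Theorem~\ref{thm: c-star-m universal property}, this gives $C^*_m(\V)\cong C^*_u(\S) *_1 C^*_u(\T)$.

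Next I would analyze the quotient of this cover by $\langle[\S,\T]\rangle$. The two inclusions of $C^*_u(\S)$ and $C^*_u(\T)$ into $C^*_u(\S)\otimes_{max}C^*_u(\T)$ have commuting ranges, so they induce a surjective $*$-homomorphism $\rho: C^*_u(\S) *_1 C^*_u(\T)\to C^*_u(\S)\otimes_{max}C^*_u(\T)$ whose kernel is the closed ideal $J$ generated by all commutators $ab-ba$ with $a\in C^*_u(\S)$, $b\in C^*_u(\T)$. Using the Leibniz identity $[aa',b]=a[a',b]+[a,b]a'$ together with the fact that $\S$ and $\T$ generate their universal covers, the ideal generated by the elementary commutators $\{st-ts: s\in\S,\ t\in\T\}=[\S,\T]$ already coincides with $J$ (one passes from commutators of generators to commutators of arbitrary elements and takes closures). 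Since the free product modulo its commutator ideal is, by construction, the universal C*-algebra generated by commuting copies of $C^*_u(\S)$ and $C^*_u(\T)$, it agrees with $C^*_u(\S)\otimes_{max}C^*_u(\T)$ by the universal property of the maximal C*-tensor product. Hence $C^*_m(\V)/\langle[\S,\T]\rangle\cong C^*_u(\S)\otimes_{max}C^*_u(\T)$. This C*-algebraic identification is the technical heart of the argument, and the step I expect to require the most care.

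I would then show that $[\S,\T]$ is a product kernel by verifying it is exactly the kernel of the ucc product map $q:=\rho|_{\V}:\V\to\S\otimes_c\T$. The inclusion $[\S,\T]\subseteq\ker q$ is immediate since $q(st)=q(ts)=s\otimes t$. For the reverse, take $w=\sum_i s_i t_i+\sum_j t_j' s_j'\in\ker q$ and rewrite each $t_j' s_j'=s_j' t_j'+(t_j' s_j'-s_j' t_j')$, so that $w$ is congruent modulo $[\S,\T]$ to an element $w_0=\sum_i s_i t_i+\sum_j s_j' t_j'$ of $\S\otimes_h\T$. On $\S\otimes_h\T$ the map $q$ is nothing but the vector-space identity $\S\otimes\T\to\S\otimes\T$ (it only changes the matrix norm, sending $\sum s_i t_i$ to $\sum s_i\otimes t_i$), hence is injective; since $q(w_0)=q(w)=0$, we conclude $w_0=0$ and therefore $w\in[\S,\T]$. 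This gives $\ker q=[\S,\T]$, so $[\S,\T]$ is a product kernel, and in particular it is self-adjoint, so the quotient remains an operator system.

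Finally, having established that $[\S,\T]$ is a product kernel, Corollary~\ref{cor: quotient C*-cover} yields a unital complete isometry $(\V/[\S,\T])_m\hookrightarrow C^*_m(\V)/\langle[\S,\T]\rangle\cong C^*_u(\S)\otimes_{max}C^*_u(\T)$. Tracking a general element, $s\otimes t+t'\otimes s'+[\S,\T]$ maps to $s\otimes t+s'\otimes t'$, so the image of $(\V/[\S,\T])_m$ is precisely $\text{span}\{s\otimes t: s\in\S,\ t\in\T\}=\S\otimes_c\T$. Since a unital complete isometry between operator systems is automatically a complete order isomorphism, this produces the asserted isomorphism $(\V/[\S,\T])_m\cong\S\otimes_c\T$ implemented by the stated map, completing the proof.
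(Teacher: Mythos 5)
Your proposal is correct and follows essentially the same route as the paper: identify $C^*_m(\V)$ with $C^*_u(\S) *_1 C^*_u(\T)$, show that quotienting by $\langle[\S,\T]\rangle$ yields $C^*_u(\S)\otimes_{max}C^*_u(\T)$, and then apply Corollary~\ref{cor: quotient C*-cover} to embed the product quotient completely isometrically with image exactly $\S\otimes_c\T$. Your only deviations are cosmetic improvements --- using the Leibniz identity to show $\langle[\S,\T]\rangle$ equals the full commutator ideal (where the paper instead invokes universal-property maps in both directions) and explicitly verifying $\ker q = [\S,\T]$ at the level of the algebraic tensor product, a point the paper asserts without proof.
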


\begin{proof}
First, note that the partial product on $\V$ satisfies $m(s \otimes I, I \otimes t) = s \otimes t$ and $m(t \otimes I, I \otimes s) = t \otimes s$. Since the map $\varphi: \S \otimes_h \T \to \S \otimes_c \T$ given by $\varphi(s \otimes t) = s \otimes t$ is a ucc product map, it extends to a ucp product map $\varphi': \S \otimes_h \T + \T \otimes_h \S \to \S \otimes_c \T$. The kernel of this map is $[\S,\T]$. Consider the embedding 
\[ ((\S \otimes_h \T + \T \otimes_h \S) / [\S,\T])_m \to C^*_m(\S \otimes_h \T + \T \otimes_h \S) / \langle [\S,\T] \rangle \cong C^*_u(\S) *_1 C^*_u(\T) / \langle [\S,\T] \rangle. \] 
Since the images of $\S$ and $\T$ commute in this quotient C*-algebra, it follows that \[ C^*_u(\S) *_1 C^*_u(\T) / \langle [\S,\T] \rangle  = (C^*_u(\S) + \langle [\S,\T] \rangle)(C^*_u(\T) + \langle [\S,\T] \rangle) \] and hence is product of commuting C*-covers. So we have a $*$-homomorphim $$\pi: C^*_u(\S) \otimes_{max} C^*_u(\T) \to C^*_u(\S) *_1 C^*_u(\T) / \langle [\S,\T] \rangle.$$ On the other hand, the map $\rho: C^*_u(\S) *_1 C^*_u(\T) \to C^*_u(\S) \otimes_{max} C^*_u(\T)$ satisfies $\langle [\S,\T] \rangle \subseteq \ker(\rho)$, so we conclude that $C^*_u(\S) *_1 C^*_u(\T) / \langle [\S,\T] \rangle \cong C^*_u(\S) \otimes_{max} C^*_u(\T) \cong C^*_m(\S \otimes_c \T)$. Thus, the proof is concluded.
\end{proof}

\begin{remark}
    \emph{For operator systems $\S$ and $\T$ we do not know whether or not 
    $$
    ((\S \otimes_h \T + \T \otimes_h \S) / [\S,\T])_{osy} \cong \S \otimes_c \T,
    $$
    where $(\V / \K)_{osy}$ denotes the operator system quotient of an operator system $\V$ by a kernel $\K$.}
\end{remark}

\section{Factorization norms and their applications} \label{s:factorization-application}

Let $\V$ be an operator space, $e \in \V$ with $\|e\|=1$. Let $\D \subseteq \V \times \V$ be a domain and $m:\D \to \V$ a partial product (not necessarily completely contractive). In this section, we consider methods to replace the matrix norm on $\V$ with another matrix norm making $m$ completely contractive. With these methods, we find a variety of new intrinsic expressions for the matrix norms on various unital operator spaces and operator systems.

Recall that a \textit{valid pair} consists of a pair of matries $(A,B)$ over $\V$ such that their matrix product $A \odot_m B$ is well defined, i.e. $(A_{ik},B_{kj}) \in \D$ for all indices $i,j,k$. In the following, we want to consider possibly ``invalid'' products of matrices that nonetheless result in matrices with entries in $\V$. To give a simple example, consider $\V = \text{span} \{e, x\}$, $\D = (\V \times \mathbb{C}e) \cup (\mathbb{C}e \times \V)$, and $m$ is the trivial product defined by $m(x,e)=m(e,x)=x$ and $m(e,e)=e$. Then the pair
\[ ( \begin{bmatrix} e & x & -x \end{bmatrix}, \begin{bmatrix} 3x \\ 2x \\ 2x \end{bmatrix}) \]
is invalid, but
\[ \begin{bmatrix} e & x & -x \end{bmatrix} \odot_m \begin{bmatrix} 3x \\ 2x \\ 2x \end{bmatrix} = 3x + 2x^2 - 2x^2 = 3x. \]
We call such tuples \textit{permissible}. 

To rigorously define the term, we need to consider the algebraic embedding of $\V$ into a certain free algebra, denoted $\F_m(\V)$. Let $\F(\V)$ denote the algebraic free algebra over $\V$ (i.e. the Fock algebra over $\V$). Let $\J_m$ denote the ideal of $\F(V)$ generated by elements of the form $a \cdot b - m(a,b)$ where $(a,b) \in \D$. We let $\F_m(\V) := \F(\V) / \J_m$ and write $a \cdot_m b$ (or just $a \cdot b$ when there is no risk of confusion) to denote the product $(a + \J_m) \cdot (b + \J_m)$ in $\F_m(\V)$. We remark that $\F_m(\V)$ is a unital algebra, with unit $e$, and that $a \cdot_m b = m(a,b)$ for all $(a,b) \in \D$. In general, words of length greater than two in $\F(\V)$ can be reduced to elements of $\V$ in $\F_m(\V)$. For instance, if $(a,b) \in \D$ and $(m(a,b),c) \in \D$, then $a \cdot_m b \cdot_m c = m(m(a,b),c)$ in $\F_m(\V)$. We say that the partial product $m$ is \textit{non-degenerate} if the linear embedding of $\V$ into $\F_m(\V)$, given by $v \mapsto v + \J_m$, is injective.

In general, a partial product on a vector space may be degenerate. For example, suppose $\D$ is a domain with $(a,b), (m(a,b),c), (a,m(b,c)) \in \D$. If $m(m(a,b),c) \neq m(a,m(b,c))$, then the partial product will be degenerate, since the image of $m(m(a,b),c) - m(a,m(b,c))$ will be zero in $\F_m(\V)$ (for instance, the partial product in Example \ref{ex: anticommutator} is degenerate for this reason). Rather than algebraically characterizing when a partial product is non-degenerate, we will make use of the following observation.

\begin{lemma}
    Let $\V$ be a vector space with vector $e$ and $m:\D \to \V$ a partial product. Then $m$ is non-degenerate if and only if there exists a unital algebra $\A$ and an injective linear map $\varphi: \V \to \A$ such that $\varphi(e)=I$ and $\varphi(m(a,b)) = \varphi(a) \varphi(b)$ for all $(a,b) \in \D$.
\end{lemma}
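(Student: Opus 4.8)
The plan is to route both implications through the free algebra $\F_m(\V)$ itself, which is the universal object for exactly the data appearing in the statement. For the forward direction I would take $\A = \F_m(\V)$ and let $\varphi \colon \V \to \A$ be the canonical linear map $\varphi(v) = v + \J_m$. By the remark preceding the lemma, $\F_m(\V)$ is a unital algebra with unit $e$ and satisfies $a \cdot_m b = m(a,b)$ for all $(a,b) \in \D$; hence $\varphi(e) = I$ and $\varphi(m(a,b)) = m(a,b) + \J_m = (a + \J_m)(b + \J_m) = \varphi(a)\varphi(b)$. The hypothesis that $m$ is non-degenerate is, by definition, precisely the injectivity of $\varphi$, so this choice of $\A$ and $\varphi$ witnesses the desired conclusion with no further work.

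For the converse I would invoke the universal property of the free algebra $\F(\V)$: the injective linear map $\varphi \colon \V \to \A$ extends uniquely to an algebra homomorphism $\Phi \colon \F(\V) \to \A$. The key computation is that $\Phi$ annihilates each generator of $\J_m$: for $(a,b) \in \D$ we have $\Phi(a \cdot b - m(a,b)) = \varphi(a)\varphi(b) - \varphi(m(a,b)) = 0$ by the multiplicativity hypothesis. Since $\ker \Phi$ is a two-sided ideal containing all generators of $\J_m$, it contains $\J_m$, and therefore $\Phi$ descends to a homomorphism $\overline{\Phi} \colon \F_m(\V) \to \A$ satisfying $\overline{\Phi}(w + \J_m) = \Phi(w)$.

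The final step is to read off injectivity of the quotient map $q \colon \V \to \F_m(\V)$, $q(v) = v + \J_m$, from the factorization $\varphi = \overline{\Phi} \circ q$. Indeed $\overline{\Phi}(q(v)) = \Phi(v) = \varphi(v)$ for all $v \in \V$, and since the composite $\varphi$ is injective, the first factor $q$ must be injective as well; this is exactly the statement that $m$ is non-degenerate.

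I do not anticipate a genuine obstacle, as the argument is a routine application of the universal property of $\F(\V)$ together with the fact that $\J_m$ is generated by the relations $a \cdot b - m(a,b)$. The only point requiring care is the unital bookkeeping in the forward direction: I must lean on the already-recorded fact that $e$ (that is, $e + \J_m$) is the unit of $\F_m(\V)$, so that $\varphi(e) = I$. This in turn rests on the domain axiom $\mathbb{C}e \times \V \subseteq \D$ and the unitality $m(e,x) = m(x,e) = x$, which force $e \cdot_m x = x = x \cdot_m e$ on the image of $\V$ and hence $e$ acts as a two-sided identity throughout $\F_m(\V)$.
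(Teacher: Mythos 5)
Your proposal is correct and follows essentially the same route as the paper's proof: both directions pass through $\F_m(\V)$, using the canonical quotient map for non-degeneracy implies existence, and the universal property of $\F(\V)$ plus the fact that $\ker\Phi \supseteq \J_m$ to descend $\varphi$ and read off injectivity of $v \mapsto v + \J_m$ for the converse. Your explicit verification of the unit condition $\varphi(e) = I$ is a small bookkeeping step the paper leaves implicit, but the argument is the same.
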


\begin{proof}
    Let $\pi_{\varphi}: \F(\V) \to \A$ be the unique homomorphism extending $\varphi$. Since $\varphi(m(a,b))= \varphi(a) \varphi(b)$, we have $\pi_{\varphi}(m(a,b) - a \cdot b) = 0$ for all $(a,b) \in \D$. It follows that $\J_m \subseteq \ker(\pi_{\varphi})$. Thus the map $\pi': \F(\V)/\J_m = \F_m(\V) \to \A$ defined y $\pi'(x + \J_m) = \pi(x)$ is a well-defined homomorphism. Because $\varphi$ is injective, $\pi'(x + \J_m) = 0$ implies that $x = 0$ whenever $x \in \V$. We conclude that the embedding $x \mapsto x + \J_m$ of $\V$ into $\F_m(\V)$ is injective. On the other hand, if $m$ is non-degenerate, then we may take $\A = \F_m(\V)$ and $\varphi: x \mapsto x + \J_m$ is injective.
\end{proof}

Suppose $\V$ is a vector space with unit $e$ and a non-degenerate partial product $m$. We say that a tuple $(X^{(1)}, X^{(2)}, \dots, X^{(n)})$ of matrices over $\V$ is \textit{permissible} if the number of rows of $X^{(k)}$ equals the number of columns of $X^{(k+1)}$ for $k=1,\dots,n-1$ and if the entries of the matrix $(X^{(1)} \odot_m \dots \odot_m X^{(n)})$ defined by
\[ (X^{(1)} \odot_m \dots \odot_m X^{(n)})_{ij} = \sum_{k_1, \dots, k_{n-1}} X_{i,k_1} \cdot_m X_{k_1,k_2} \cdot_m \dots \cdot_m X_{k_{n-1},j} \]
are all in $\V$, where the product is taken in $\F_m(\V)$.

\begin{lemma} \label{l:permis}
    Assume $\V$ is a vector space and $m:\D \to \V$ is a non-degenerate partial product with unit $e \in \V$. Let $A \in M_{n,k}(\V)$ and $B = M_{k,l}(\V)$. If $(A,B)$ is a valid pair, $A = A_1 \odot_m \dots \odot_m A_{N}$ and $B = B_1 \odot_m \dots \odot_m B_{M}$ with $(A_1, \dots, A_N)$ and $(B_1, \dots, B_M)$ permissible, then $(A_1, \dots, A_N, B_1, \dots, B_M)$ is a permissible tuple satisfying \[ A \odot_m B = A_1 \odot_m \dots \odot_m A_N \odot_m B_1 \odot_m \dots \odot_m B_M. \]
\end{lemma}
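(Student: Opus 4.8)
The plan is to reduce everything to the associativity of matrix multiplication over the associative unital algebra $\F_m(\V)$, and then read off both conclusions from the validity of $(A,B)$. Throughout I regard $\V$ as a subspace of $\F_m(\V)$ via the injection $v \mapsto v + \J_m$, which is legitimate precisely because $m$ is non-degenerate. Under this identification the $n$-fold product $X^{(1)} \odot_m \cdots \odot_m X^{(n)}$ appearing in the definition of permissibility is literally the product of $X^{(1)}, \dots, X^{(n)}$ computed in the matrix algebra over $\F_m(\V)$, and ``permissible'' simply records the extra fact that the resulting entries happen to land in the subspace $\V$.

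First I would dispose of dimension compatibility. Since $A = A_1 \odot_m \cdots \odot_m A_N$ lies in $M_{n,k}(\V)$, the last factor $A_N$ has $k$ columns; since $B = B_1 \odot_m \cdots \odot_m B_M$ lies in $M_{k,l}(\V)$, the first factor $B_1$ has $k$ rows. Hence $A_N$ and $B_1$ are composable, and concatenating the two permissible tuples yields a tuple all of whose consecutive factors have matching inner dimensions. Next I would observe that the two a priori distinct meanings of $A \odot_m B$ coincide: on one hand it is defined entrywise by $(A \odot_m B)_{xz} = \sum_y m(A_{xy}, B_{yz})$, which makes sense because $(A,B)$ is valid; on the other hand, multiplying $A$ and $B$ in the matrix algebra over $\F_m(\V)$ gives entries $\sum_y A_{xy} \cdot_m B_{yz}$. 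Since $(A_{xy}, B_{yz}) \in \D$ for all indices, the identity $a \cdot_m b = m(a,b)$ valid for pairs in $\D$ shows the two agree; in particular $A \odot_m B$ has all entries in $\V$.

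The crux is then a single application of associativity. Because $\F_m(\V)$ is an associative algebra, its matrix multiplication is associative, so no parenthesization is needed and
\[
A \odot_m B = (A_1 \odot_m \cdots \odot_m A_N) \odot_m (B_1 \odot_m \cdots \odot_m B_M) = A_1 \odot_m \cdots \odot_m A_N \odot_m B_1 \odot_m \cdots \odot_m B_M,
\]
an identity of matrices over $\F_m(\V)$ that holds irrespective of whether the intermediate products have entries in $\V$. Since the left-hand side has entries in $\V$ by the previous paragraph, so does the far right; this is exactly the assertion that $(A_1, \dots, A_N, B_1, \dots, B_M)$ is permissible, and the displayed equality is the claimed identity.

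The only point demanding care is conceptual rather than computational, and it is where I expect the subtlety to lie: the intermediate products such as $A_1 \odot_m \cdots \odot_m A_j$ need not have entries in $\V$, so one must resist treating $\odot_m$ as an operation on matrices over $\V$ and instead work throughout in the ambient matrix algebra over $\F_m(\V)$, where associativity is unconditional. Once that vantage point is fixed and the two readings of $A \odot_m B$ are reconciled as above, no genuine obstacle remains.
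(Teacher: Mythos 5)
Your proposal is correct and follows essentially the same route as the paper's own (very terse) proof: use non-degeneracy to identify $\V$ with a subspace of $\F_m(\V)$, then conclude from associativity of the product in $\F_m(\V)$. Your additional care in checking dimension compatibility, in reconciling the entrywise definition of $A \odot_m B$ via $m$ with matrix multiplication over $\F_m(\V)$, and in noting that intermediate products need not have entries in $\V$ simply makes explicit what the paper leaves implicit.
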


\begin{proof}
    Non-degeneracy of the product $m$ implies that $\V$ can be identified as a subspace of $\F_m(\V)$, and the rest follows from associativity of the product in $\F_m(\V)$.
\end{proof}

Now assume $\V$ is an operator space and that $e \in \V$ with $\|e\|=1$. Let $\D$ be a domain and $m: \D \to \V$ a non-degenerate partial product.  Given a matrix $A \in M_{n,k}(\V)$, we define
\[ \|A\|_{n,k}^m := \inf \{ \|a_1\| \|a_2\| \dots \|a_n\| : A = a_1 \odot_m \dots \odot_m a_n \} \]
where the infimum is taken over permissible factorizations of $A$ into matrices over $\V$. It is clear that $\|A\|_{n,k}^m \leq \|A \|_{n,k}$ for every $A\in M_{n,k}(\V)$. We will show that, under moderate restrictions, $(\V,e, \{\|\cdot\|_n^m\})$ is a unital operator space and $m$ is a completely contractive unital partial product on $\V$.

Recall that a sequence of maps $\|\cdot\|_n: M_n(\X) \to [0,\infty)$ is an $\mathrm{L}^\infty$-\textit{matrix seminorm} if for all $\alpha, \beta \in M_{n,k}$ and all $A \in M_n(\X)$, $B \in M_k(\X)$, we have
\[ \|\alpha^* A \beta \|_k \leq \|\alpha\| \|\beta\| \|A\|_n \quad \text{and} \quad \|A \oplus B\|_{n+k} = \max(\|A\|, \|B\|). \]

\begin{proposition} \label{prop: factorization seminorm}
    Let $\X$ be an operator space with unit $e$, let $\D$ be a domain and $m : \D \rightarrow \X$ a partial product on $\X$. Then the sequence $\{\|\cdot\|_n^m\}$ defines an $\mathrm{L}^\infty$-matrix seminorm on $\X$. Moreover, if there exists an \emph{injective} unital completely contractive product map $i : \X \rightarrow \bB(\H)$ (with respect to the original operator space structure on $\X$), then $\{\|\cdot\|_n^m\}$ is an $\mathrm{L}^{\infty}$-matrix \emph{norm}. In this case, $i: \X \to \bB(\H)$ is completely contractive with respect to the matrix norm $\{\|\cdot\|_n^m\}$.
\end{proposition}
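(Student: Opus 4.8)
The plan is to verify the two defining axioms of an $\mathrm{L}^\infty$-matrix seminorm directly from the factorization definition, then invoke the automatic consequences recorded in Section~\ref{s:prelim} to upgrade to a genuine seminorm, and finally to use an injective product map to force faithfulness. Throughout I work inside the free algebra $\F_m(\X)$, identifying $\X$ with its image there (legitimate since $m$ is non-degenerate), so that permissible factorizations may be concatenated and reassociated freely via Lemma~\ref{l:permis}. First note that each $\|\cdot\|_n^m$ is finite, since the length-one tuple $(A)$ is permissible and gives $\|A\|_n^m \le \|A\|_n < \infty$.

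For the scaling axiom, fix scalar matrices $\alpha,\beta \in M_{n,k}$ and $A \in M_n(\X)$, and take any permissible factorization $A = a_1 \odot_m \cdots \odot_m a_N$. Pushing the scalar matrices into the outer factors gives, in $\F_m(\X)$,
\[ \alpha^* A \beta = (\alpha^* a_1) \odot_m a_2 \odot_m \cdots \odot_m a_{N-1} \odot_m (a_N \beta), \]
which is again permissible, its product being $\alpha^* A \beta \in M_k(\X)$. Since $\alpha^*,\beta$ are scalar, the original operator space axioms give $\|\alpha^* a_1\| \le \|\alpha\|\,\|a_1\|$ and $\|a_N\beta\| \le \|a_N\|\,\|\beta\|$, so the product of the factor norms is at most $\|\alpha\|\,\|\beta\| \prod_\ell \|a_\ell\|$. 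Taking the infimum over factorizations of $A$ yields $\|\alpha^* A \beta\|_k^m \le \|\alpha\|\,\|\beta\|\,\|A\|_n^m$.

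The direct-sum axiom is where the real work lies. The inequality $\|A\oplus B\|^m \ge \max(\|A\|^m,\|B\|^m)$ is immediate from the scaling axiom applied to the coordinate projections and inclusions expressing $A$ and $B$ as compressions of $A\oplus B$ by norm-one scalar matrices. For the reverse inequality I expect the main obstacle: a naive entrywise direct sum of factorizations of $A$ and $B$ produces the bound $\prod_\ell \max(\|a_\ell\|,\|b_\ell\|)$, a product of maxima rather than the desired maximum of products. The remedy is the standard Haagerup-style renormalization: pad the shorter factorization with unit factors $I_r \otimes e$ (which have norm $1$ since $\|\mathrm{diag}(e,\dots,e)\| = \|e\| = 1$, and which leave the product unchanged as $m$ is unital), so both factorizations acquire a common length $L$; then rescale within each factorization so that all $L$ factors of $A$ share the common norm $(\prod_i \|a_i\|)^{1/L}$ and likewise for $B$. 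Forming $(a_1\oplus b_1)\odot_m \cdots \odot_m (a_L \oplus b_L)$ then has product of norms $\max(\prod_i\|a_i\|,\prod_j\|b_j\|)$, and taking infima over the two factorizations independently produces $\max(\|A\|^m,\|B\|^m)$. The degenerate case $A=0$ or $B=0$, where the rescaling would divide by zero, is instead handled directly by the block-diagonal factorization $\mathrm{diag}(0,b_1)\odot_m\cdots\odot_m\mathrm{diag}(0,b_M)$.

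Having established both axioms, $\{\|\cdot\|_n^m\}$ is an $\mathrm{L}^\infty$-matrix seminorm, and by the facts recalled in Section~\ref{s:prelim} each $\|\cdot\|_n^m$ is automatically a seminorm and the structure extends to rectangular matrices. For the final claims, suppose $i:\X \to \bB(\H)$ is an injective unital completely contractive product map. By the preceding lemma characterizing non-degeneracy, $i$ extends to an algebra homomorphism $\F_m(\X) \to \bB(\H)$; hence for any permissible factorization $A = a_1 \odot_m \cdots \odot_m a_N$ one has $i^{(n)}(A) = i(a_1)\,i(a_2)\cdots i(a_N)$ as a product of matrices over $\bB(\H)$ (with $i$ applied entrywise to each factor), whence $\|i^{(n)}(A)\| \le \prod_\ell \|i(a_\ell)\| \le \prod_\ell \|a_\ell\|$ by complete contractivity of $i$ in the original norm. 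Infimizing over factorizations gives $\|i^{(n)}(A)\| \le \|A\|_n^m$, so $i$ is completely contractive for $\{\|\cdot\|_n^m\}$. Finally, if $\|A\|_n^m = 0$ then $i^{(n)}(A)=0$, and since $i$ injective forces $i^{(n)}$ injective, $A=0$; thus $\{\|\cdot\|_n^m\}$ is a matrix norm.
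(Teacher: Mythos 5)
Your proof is correct and takes essentially the same route as the paper's: the scaling axiom via manipulating the factorization (the paper inserts unit factors $(\alpha^*\otimes e)$ and $(\beta\otimes e)$ where you absorb the scalars into the outer factors, which is equivalent), the direct-sum axiom via the same padding-with-$I\otimes e$ and renormalization trick together with compressions for the reverse inequality, and the norm statement via applying the (extension of the) injective product map $i$ across permissible factorizations and infimizing. Your explicit handling of the zero-factor degenerate case and your appeal to the non-degeneracy lemma to justify that $i$ multiplies across \emph{permissible} (not merely valid) factorizations are slightly more careful than the paper's write-up, but the argument is the same.
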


\begin{proof}
    Let $\alpha, \beta \in M_{n,k}$ and let $A \in M_n(\X)$. Suppose $A$ admits a permissible factorization $A = A_1 \odot A_2 \odot \dots \odot A_N$. Then $\alpha^* A \beta$ admits the factorization $(\alpha^* \otimes e) \odot A_1 \odot A_2 \odot \dots \odot A_N \odot (\beta \otimes e)$ and hence
    \[ \|\alpha^*A\beta\|_n^m \leq \|\alpha\| \|A_1\| \|A_2\| \dots \|A_N\| \|\beta\|. \]
    Taking an infimum over all such factorizations, we see that $\|\alpha^*A\beta\|_n^m \leq \|\alpha\| \|\beta\| \|A\|_n^m$.

    Now let $A \in M_n(\X)$ and $B \in M_k(\X)$. We will demonstrate that $\|A \oplus B\|_{n+k}^m = \max(\|A\|_n^m, \|B\|_k^m)$. Without loss of generality, we may assume  $\|A\|_n^m \geq \|B\|_k^m$. Then for every $\epsilon > 0$ we can find factorizations $A = A_1 \odot \dots \odot A_N$ and $B = B_1 \odot \dots \odot B_M$ such that $\prod_s \|A_s\| \leq \|A\|_n^m + \epsilon$ and $\prod_t \|B_t\| \leq \|B\|_n^m + \epsilon$. If $N \neq M$, we can introduce scalar multiples $I_n \otimes e$ or $I_k \otimes e$ to obtain a new factorization with $N=M$ and leaving $\prod_s \|A_s\|$ and $\prod_t \|B_t\|$ unchanged. By renormalizing the $A_s$'s and $B_t$'s, we can also assume $\|A_s\| \leq (\|A\|_n^m + \epsilon)^{1/N}$ and $\|B_t\| \leq (\|B\|_k^m + \epsilon)^{1/M}$ for each $s,t$. Then
    \[ \|A \oplus B\|_{n+k}^m \leq \| (A_1 \oplus B_1) \|_{n+k} \| (A_2 \oplus B_2) \|_{n+k} \dots \|(A_N \oplus B_N) \|_{n+k} \leq \|A\|_n^m + \epsilon \]
    and hence we conclude $\|A \oplus B\|_{n+k}^m \leq \|A\|_n^m$. Moreover, since
    \[ \|A\|_n^m = \| \begin{pmatrix} I_n & 0 \end{pmatrix} \begin{pmatrix} A & 0 \\ 0 & B \end{pmatrix} \begin{pmatrix} I_n & 0 \end{pmatrix}^T \|_{n}^m \leq \|A \oplus B\|_{n+m}^m \]
    we conclude that $\|A \oplus B\|_{n+k}^m = \|A\|_n^m = \max(\|A\|_n^m, \|B\|_k^m)$.

    Finally, suppose we have an injective completely contractive unital product map $i: \X \to \bB(\H)$. Let $A \in M_n(\X)$ with $A \neq 0$, and assume $A = A_1 \odot A_2 \odot \dots \odot A_N$ is a permissible factorization. Then $i^{(n)}(A) = i^{(n)}(A_1) \odot i^{(n)}(A_2) \odot \dots \odot i^{(n)}(A_N)$. Since on $\bB(\H)$ the natural product is operator composition, we have that $\|i^{(n)}(A)\| \leq \|i^{(n)}(A_1)\| \dots \|i^{(n)}(A_N)\| \leq \|A_1 \| \dots \|A_N\|$. But since $i^{(n)}(A) \neq 0$ (by injectivity of $i$), the product $\|A_1 \| \dots \|A_N\|$ is bounded below by the positive value $\|i^{(n)}(A)\|$ and thus $0 < \|i^{(n)}(A)\| \leq \|A\|_n^m$. We conclude that $i$ is completely contractive and that $\{\|\cdot\|_n^m\}$ is an $\mathrm{L}^\infty$-matrix norm in this case.
\end{proof}

\begin{theorem} \label{thm: factorization norm is unital}
    Let $\X$ be a operator space with unit $e\in \X$, $\D$ a domain and $m : \D \rightarrow \X$ a partial product. Suppose that $\{\|\cdot\|_n^m\}$ is an $\mathrm{L}^\infty$-matrix \emph{norm} on $\X$. Then $(\X,e, \{\|\cdot\|_n^m \})$ is a unital operator space and $m$ is a completely contractive partial product, i.e. there exists $\pi: \X \to \bB(\H)$ completely isometric such that $\pi(e) = I$ and $\pi(m(a,b)) = \pi(a)\pi(b)$ for all $(a,b) \in \D$.
\end{theorem}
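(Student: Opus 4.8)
The plan is to verify the hypotheses of Corollary~\ref{cor: abstract product characterization} for $(\X,e)$ equipped with the factorization matrix norm $\{\|\cdot\|_n^m\}$, and then to invoke that corollary to produce the asserted unital complete isometry. Since we are assuming $\{\|\cdot\|_n^m\}$ is an $\mathrm{L}^\infty$-matrix norm, Ruan's theorem already makes $(\X,\{\|\cdot\|_n^m\})$ an abstract operator space, so what remains is to check two things: that $m$ is completely contractive for this norm, and that $e$ is a unit vector for it.

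First I would establish complete contractivity of $m$ with respect to $\{\|\cdot\|_n^m\}$. Let $(A,B)$ be a valid pair with $A \in M_{n,k}(\X)$ and $B \in M_{k,l}(\X)$, and fix permissible factorizations $A = A_1 \odot_m \cdots \odot_m A_N$ and $B = B_1 \odot_m \cdots \odot_m B_M$. By Lemma~\ref{l:permis}, the concatenated tuple $(A_1,\dots,A_N,B_1,\dots,B_M)$ is again permissible and
\[ A \odot_m B = A_1 \odot_m \cdots \odot_m A_N \odot_m B_1 \odot_m \cdots \odot_m B_M, \]
so this concatenation is a permissible factorization of $A \odot_m B$. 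Consequently $\|A \odot_m B\|^m \le \|A_1\|\cdots\|A_N\|\,\|B_1\|\cdots\|B_M\|$, and taking the infimum over the two families of factorizations independently yields $\|A \odot_m B\|^m \le \|A\|^m \,\|B\|^m$. Hence $m$ is completely contractive for the factorization norm.

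Next I would pin down $\|e\|_1^m = 1$. The one-term factorization gives $\|e\|_1^m \le \|e\| = 1$. For the reverse inequality, observe that $(e,e) \in \D$ (since $\bC e \times \X \subseteq \D$) and $m(e,e) = e$ because $m$ is unital; applying the complete contractivity just proved to the valid pair $((e),(e))$ gives $\|e\|_1^m = \|m(e,e)\|_1^m \le (\|e\|_1^m)^2$, and since $\|e\|_1^m > 0$ (as $\{\|\cdot\|_n^m\}$ is a genuine norm and $e \neq 0$) we conclude $\|e\|_1^m \ge 1$. Thus $e$ is a unit vector for the new norm.

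With $(\X,\{\|\cdot\|_n^m\})$ an operator space, $e$ a unit vector, and $m$ completely contractive, Corollary~\ref{cor: abstract product characterization} furnishes a Hilbert space $\H$ and a complete isometry $\pi : \X \to \bB(\H)$ satisfying $\pi(e) = I$ and $\pi(m(a,b)) = \pi(a)\pi(b)$ for all $(a,b) \in \D$. This is precisely the claimed realization, and it exhibits $(\X,e,\{\|\cdot\|_n^m\})$ as a unital operator space on which $m$ is a completely contractive partial product. The one genuinely delicate point is the complete-contractivity step: it rests entirely on the fact that concatenating permissible factorizations again produces a permissible factorization of the product, which is exactly the associativity content packaged into Lemma~\ref{l:permis} (and which relies on the non-degeneracy of $m$ implicit in the definition of the factorization norm); the remaining verifications are routine bookkeeping.
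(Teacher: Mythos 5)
Your proof is correct and takes essentially the same route as the paper: the heart of both arguments is that concatenating permissible factorizations of a valid pair $(A,B)$ yields a permissible factorization of $A \odot_m B$ (Lemma \ref{l:permis}), giving $\|A \odot_m B\|^m \leq \|A\|^m \|B\|^m$ after taking infima, followed by the realization via Corollary \ref{cor: abstract product characterization}. Your explicit verification that $\|e\|_1^m = 1$ (via $\|e\|_1^m \leq \|e\| = 1$ and $\|e\|_1^m = \|m(e,e)\|_1^m \leq (\|e\|_1^m)^2$ with $\|e\|_1^m > 0$) fills in a detail the paper's proof leaves implicit in its appeal to that corollary, and your argument for it is valid.
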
 

\begin{proof}
    It suffices to check that $m$ is completely contractive. Suppose that $C \in M_n(\X)$, and that $(A,B)$ is a valid pair of matrices over $\X$ with $C = A \odot_m B$. Suppose further that $A = A_1 \odot A_2 \odot \dots \odot A_N$ and that $B = B_1 \odot B_2 \odot \dots \odot B_M$ are permissible factorizations. Then $C = A_1 \odot \dots \odot A_N \odot B_1 \odot \dots \odot B_M$ is a permissible factorization by Lemma \ref{l:permis}. Hence
    \[ \|C\|_n^m \leq (\|A_1\| \dots \|A_N\|)(\|B_1\| \dots \|B_M\|). \]
    Taking an infimum on the right shows that $\|C\|_n^m \leq \|A\|_n^m \|B\|_n^m$. Therefore $m$ is completely contractive.
\end{proof}

\subsection{Unital operator spaces}

Suppose that $\V$ is an operator space, $e \in \V$ is a unit vector, and $m$ is the trivial product defined by $m(e,x)=m(x,e)=x$ for all $x \in \V$. If $m$ is completely contracitve, then $\V$ is already a unital operator space, by Theorem \ref{thm: Unit characterization}. If not, the previous theorem implies that if the factorization norm 
\[ \|A\|_{n,k}^e := \inf \{ \|a_1\| \|a_2\| \dots \|a_n\| : A = a_1 \odot_m \dots \odot_m a_n \} \]
is an $\mathrm{L}^{\infty}$-matrix norm, then $(\V,\{\|\cdot\|_n^e\})$ is a unital operator space. In fact, it is the ``largest'' \emph{unital} operator space structure on $\V$ subordinate to the given operator space structure, as explained in the next result.

\begin{theorem} \label{thm: unital factorization norm}
    Suppose that $\V$ is an operator space with matrix norms $\{\|\cdot\|_n\}$, $e \in \V$ is a unit vector, and $m$ is the trivial product defined by $m(e,x)=m(x,e)=x$ for all $x \in \V$. If 
    \[ \|A\|_n^e := \inf \{ \|a_1\| \|a_2\| \dots \|a_n\| : A = a_1 \odot_m \dots \odot_m a_n \} \]
    is an $\mathrm{L}^{\infty}$-matrix norm, then $(\V,\{\|\cdot\|_n^e\})$ is a unital operator space with unit $e$. Moreover, if $\{\|\cdot\|_n^*\}$ is another matirx norm on $\V$ making $(\V, \{\|\cdot\|_n^*\}, e)$ a unital operator space, and if $\|x\|_n^* \leq \|x\|_n$ for all $x \in M_n(\V)$, then $\|x\|_n^* \leq \|x\|^e_n$ for all $x \in M_n(\V)$.
\end{theorem}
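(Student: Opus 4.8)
The first assertion needs almost no new work: the trivial product is non-degenerate — choosing a linear complement $\W$ of $\bC e$ in $\V$, the map $\V = \bC e \oplus \W \to T(\W)$ that sends $e$ to the unit of the tensor algebra $T(\W)$ and fixes $\W$ is an injective unital product map into a unital algebra — so $\|\cdot\|^e$ is a well-defined factorization norm, and since it is assumed to be an $\mathrm{L}^\infty$-matrix norm, Theorem \ref{thm: factorization norm is unital} immediately gives that $(\V, e, \{\|\cdot\|_n^e\})$ is a unital operator space with unit $e$. The plan is therefore to focus on the maximality statement.

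For maximality, suppose $\{\|\cdot\|_n^*\}$ makes $(\V,\{\|\cdot\|_n^*\},e)$ a unital operator space with $\|x\|_n^* \le \|x\|_n$ for all (square) $x$. By the definition of a unital operator space, I would fix a unital complete isometry $\pi : (\V,\{\|\cdot\|_n^*\}) \to \bB(\H)$ with $\pi(e) = I$. The crucial point is that $\pi$ is automatically a product map for the trivial product, since $\pi(m(\lambda e, y)) = \lambda\pi(y) = \pi(\lambda e)\pi(y)$ and symmetrically. Hence, by the universal property of the free algebra $\F(\V)$ — the induced homomorphism annihilates each generator $a \cdot b - m(a,b)$ of $\J_m$ — the map $\pi$ extends to a unital algebra homomorphism $\tilde\pi : \F_m(\V) \to \bB(\H)$ whose matrix amplifications send matrix products in $\F_m(\V)$ to operator products in $\bB(\H)$.

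Next I would push an arbitrary permissible factorization through $\tilde\pi$. Given $x \in M_n(\V)$ and a permissible factorization $x = a_1 \odot_m \cdots \odot_m a_N$ with $a_i \in M_{k_{i-1},k_i}(\V)$ and $k_0 = k_N = n$, permissibility says exactly that $x$, viewed in $M_n(\F_m(\V))$, equals the matrix product $a_1 a_2 \cdots a_N$; applying $\tilde\pi$ entrywise then produces the genuine operator identity $\pi^{(n)}(x) = \pi(a_1)\pi(a_2)\cdots\pi(a_N)$. Submultiplicativity of the operator norm, together with $\|\pi(a_i)\| = \|a_i\|^*$ (complete isometry of $\pi$) and the hypothesis $\|a_i\|^* \le \|a_i\|$, then gives $\|x\|_n^* = \|\pi^{(n)}(x)\| \le \prod_i \|a_i\|$. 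Taking the infimum over all permissible factorizations yields $\|x\|_n^* \le \|x\|_n^e$; combined with the already observed inequality $\|\cdot\|^e \le \|\cdot\|$, this exhibits $\{\|\cdot\|_n^e\}$ as the largest such norm.

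The anticipated obstacles are bookkeeping rather than conceptual. The factorizations involve rectangular matrices, so I need $\|\cdot\|^* \le \|\cdot\|$ for the rectangular $a_i$; this follows from the square-matrix hypothesis because, for any $\mathrm{L}^\infty$-matrix norm, a rectangular matrix and its square zero-padding have equal norm. I must also confirm that ``permissible'' is precisely the condition making the algebraic identity $x = a_1 \cdots a_N$ hold in $M_n(\F_m(\V))$, which is where non-degeneracy of the trivial product (identifying $\V$ as a subspace of $\F_m(\V)$) is essential. The one genuinely important idea is that under any unital completely isometric representation a permissible factorization becomes a literal composition of operators, after which submultiplicativity does all the work.
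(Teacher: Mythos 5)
Your proof is correct and follows essentially the same route as the paper's: deduce the first assertion from Theorem \ref{thm: factorization norm is unital}, then fix a unital complete isometry $\pi$ for $\{\|\cdot\|_n^*\}$, observe that a permissible factorization becomes a genuine composition of operators under $\pi$, and apply submultiplicativity of the operator norm before taking the infimum over factorizations. The extra care you take --- verifying non-degeneracy of the trivial product via the tensor algebra, routing the identity $\pi^{(n,k)}(X) = \pi(X_1)\cdots\pi(X_N)$ through the homomorphism on $\F_m(\V)$, and extending the square-matrix hypothesis to the rectangular factors by zero-padding --- only fills in bookkeeping that the paper's proof leaves implicit.
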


\begin{proof}
    That $(\V,\{\|\cdot\|_n^e\})$ is a unital operator space with unit $e$ follows from Theorem \ref{thm: factorization norm is unital}. Now suppose we are given $\{\|\cdot\|_n^*\}$ as stated. Since $(V, \{\|\cdot\|_n^*\}, e)$ is unital, there exists a linear map $\pi: \V \to \bB(\H)$ with $\|\pi^{(n)}(x)\| = \|x\|_n^*$ for all $x \in M_n(\V)$ and $\pi(e)=I$. Suppose $X \in M_{n,k}(\V)$ and $X = X_1 \odot_m \dots \odot_m X_N$ is a permissible factorization with respect to the trivial product. Because $\pi$ is unital, $\pi^{(n,k)}(X) = \pi^{n,n_1}(X_1) \pi^{(n_1,n_2)}(X_2) \dots \pi^{(n_{N-1},k)}(X_n)$. Hence
    \begin{eqnarray}
        \|X\|_{n,k}^* & = & \|\pi^{(n,k)}(X)\| \nonumber \\
        & \leq & \|\pi^{n,n_1}(X_1)\| \cdots \|\pi^{n_{N-1},k)}(X_N)\| \nonumber \\
        & = & \|X_1\|_{n,n_1}^* \dots \|X_N\|_{n_{N-1},k}^* \nonumber \\
        & \leq & \|X_1\|_{n,n_1} \dots \|X_N\|_{n_{N-1},k}. \nonumber
    \end{eqnarray}
    Taking an infimum shows that $\|X\|_{n,k}^* \leq \|X\|_{n,k}^e$.
\end{proof}

Now suppose that $(\X,e)$ is a unital operator space and that $\K \subseteq \X$ is the kernel of a unital completely contractive map. Then we may form the operator space quotient $(\X / \K)_{osp}$ and unital operator space quotient $(\X / \K)_{usp}$. The identity map $i: (\X / \K)_{osp} \to (\X / \K)_{usp}$ is completely contractive, but not necessarily completely isometric. The above theorem allows us to express the matrix norms on $(\X / \K)_{usp}$ in terms of the matrix norms on $(\X / \K)_{osp}$. In the case when $\X$ is an operator system, $(\X / \K)_{usp}$ is isometricaly isomorphic to the operator system quotient. Thus, for operator systems, we get an expression for the operator system quotient in terms of the operator space quotient.

\begin{corollary} \label{cor: Quotient factorization norm}
    Let $(\X,e)$ be a unital operator space and $\K \subseteq \X$ the kernel of a unital complete contraction. Let $\{\|\cdot\|_n^{osp} \}$ and $\{\|\cdot\|_n^{usp}\}$ denote operator space quotient and unital operator space quotient norms, respectively, on $\X / \K$. Then $\|A + M_n(\K)\|_n^{usp} = \|A + M_n(\K)\|_n^{e + \K}$ for all $A \in M_n(\X)$, where $\|\cdot\|^{e+\K}$ is the matrix norm defined in Theorem \ref{thm: unital factorization norm} according to the unit vector $e + \K \in X/ \K$.
\end{corollary}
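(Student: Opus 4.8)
Here is the proof I would give.

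The plan is to identify both $\{\|\cdot\|_n^{usp}\}$ and $\{\|\cdot\|_n^{e+\K}\}$ as the \emph{largest} unital operator space matrix norm on $\X/\K$ that is dominated by the operator space quotient norm $\{\|\cdot\|_n^{osp}\}$; uniqueness of such a maximal norm then forces them to coincide. That $\{\|\cdot\|_n^{e+\K}\}$ enjoys this maximality is exactly the content of Theorem \ref{thm: unital factorization norm}, applied to the operator space $\V = (\X/\K)_{osp}$ with unit vector $e+\K$ and the trivial product $m$; that theorem also guarantees that $\{\|\cdot\|_n^{e+\K}\}$ is a genuine unital operator space norm with $\|\cdot\|_n^{e+\K} \leq \|\cdot\|_n^{osp}$. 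So the real work is to establish the analogous maximality for $\{\|\cdot\|_n^{usp}\}$.

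First I would record that $\{\|\cdot\|_n^{usp}\}$ is a unital operator space norm on $\X/\K$ (as discussed in Section \ref{s:prelim}) and is subordinate to $\{\|\cdot\|_n^{osp}\}$. The latter is immediate from the defining formulas: if $\varphi: \X \to \bB(\H)$ is ucc with $\K \subseteq \ker(\varphi)$, then for any $j \in M_n(\K)$ one has $\varphi^{(n)}(x) = \varphi^{(n)}(x+j)$, so $\|\varphi^{(n)}(x)\| \leq \|x+j\|_n$; taking the infimum over $j \in M_n(\K)$ and then the supremum over $\varphi$ yields $\|x+\K\|_n^{usp} \leq \|x+\K\|_n^{osp}$.

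The key step is the maximality of $\{\|\cdot\|_n^{usp}\}$: if $\{\|\cdot\|_n^*\}$ is \emph{any} unital operator space norm on $\X/\K$ with unit $e+\K$ satisfying $\|\cdot\|_n^* \leq \|\cdot\|_n^{osp}$, then $\|\cdot\|_n^* \leq \|\cdot\|_n^{usp}$. To prove this, let $\sigma: (\X/\K, \{\|\cdot\|_n^*\}) \to \bB(\H)$ be a unital complete isometry realizing the $*$-norm, and let $q: \X \to \X/\K$ be the quotient map. Then $\psi := \sigma \circ q: \X \to \bB(\H)$ is unital, vanishes on $\K$, and is completely contractive, since $\|\psi^{(n)}(x)\| = \|x+\K\|_n^* \leq \|x+\K\|_n^{osp} \leq \|x\|_n$. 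Thus $\psi$ is one of the maps appearing in the supremum defining $\|\cdot\|^{usp}$, whence $\|x+\K\|_n^* = \|\psi^{(n)}(x)\| \leq \|x+\K\|_n^{usp}$.

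Finally I would combine the pieces. Since $\{\|\cdot\|_n^{usp}\}$ is a unital operator space norm dominated by $\{\|\cdot\|_n^{osp}\}$, Theorem \ref{thm: unital factorization norm} gives $\|\cdot\|_n^{usp} \leq \|\cdot\|_n^{e+\K}$; and since $\{\|\cdot\|_n^{e+\K}\}$ is a unital operator space norm dominated by $\{\|\cdot\|_n^{osp}\}$, the maximality just established for $\|\cdot\|^{usp}$ gives $\|\cdot\|_n^{e+\K} \leq \|\cdot\|_n^{usp}$, so the two norms agree; for operator systems the claimed description of the operator system quotient then follows from the identification of $(\X/\K)_{usp}$ with it recalled before the statement. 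The only genuinely delicate point is the maximality of $\|\cdot\|^{usp}$ in the third paragraph; everything else is bookkeeping with the defining suprema and infima. As an alternative to invoking the maximality half of Theorem \ref{thm: unital factorization norm} for the inequality $\|\cdot\|^{usp} \leq \|\cdot\|^{e+\K}$, one can argue it directly: each ucc $\varphi$ vanishing on $\K$ descends to a unital complete contraction on $(\X/\K)_{osp}$ that is multiplicative on every permissible factorization for the trivial product, so $\|\varphi^{(n)}(x)\|$ is bounded by each factorization product and hence by $\|A\|_n^{e+\K}$.
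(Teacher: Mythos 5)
Your proof is correct and takes essentially the same route as the paper's: one inequality comes from the universal property (maximality) of the unital operator space quotient---which you prove explicitly via $\sigma \circ q$, exactly the mechanism the paper invokes---and the other from the maximality clause of Theorem \ref{thm: unital factorization norm}. One small imprecision: Theorem \ref{thm: unital factorization norm} does not by itself ``guarantee'' that $\{\|\cdot\|_n^{e+\K}\}$ is a norm, since its hypothesis assumes the factorization seminorm is an $\mathrm{L}^{\infty}$-matrix \emph{norm}; this must be verified first, and your own third paragraph already supplies the needed ingredient (the identity map $(\X/\K)_{osp} \to (\X/\K)_{usp}$ composed with a unital complete isometry into $\bB(\H)$ is an injective ucc unital map, so Proposition \ref{prop: factorization seminorm} applies), which is precisely how the paper's proof begins.
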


\begin{proof}
    Since $i: (\X/\K)_{osp} \to (\X/\K)_{usp}$ is completely contractive and injective, and since $(\X/\K)_{usp}$ is unital with unit $e+\K$, the matrix norm $\|\cdot\|_n^{e+\K}$ makes $\X/\K$ into a unital operator space. Hence, by the universal property of the unital operator space quotient, the identity map $j: (\X/\K)_{usp} \to (\X/\K, \{\|\cdot\|_n^{e+\K}\})$ is completely contractive. That $j^{-1}: (\X/\K, \{\|\cdot\|_n^{e+\K}\}) \to (\X/\K)_{usp}$ is completely contractive follows from Theorem \ref{thm: unital factorization norm}. We conclude that $(\X/\K, \{\|\cdot\|_n^{e+\K}\}) = (\X/\K)_{usp}$.
\end{proof}

Explicitly, the above theorem tells us that when $x \in M_n((\X /\K)_{usp})$,
\[ \|X\|_n^{usp} = \inf \{ \|X_1\| \dots \|X_N\| : X = X_1 \odot \dots \odot X_n \}\]
where the infimum is over all permissible factorizations in $\X / \K$ with the trivial product and the norms on the right hand side are computed in the operator space $(\X / \K)_{osp}$.

\subsection{Commuting tensor product}

Let $\S$ and $\T$ be operator systems with units $e_{\S}$ and $e_{\T}$ respectively, and let $\V = \S \otimes \T$ denote the algebraic tensor product. Define a domain $\D$ and partial product $m: \D \to \V$ as for the commuting tensor product (see Construction \ref{const: c tensor}).

Given a matrix $A \in M_{n,k}(\V)$, we define
\[ \|A\|^c := \inf \{\|a_1\| \|a_2\| \dots \|a_n\| : A = a_1 \odot_m \dots \odot_m a_n\} \]
where the infimum is taken over all permissible factorizations of $A$ as matrices over $\V$ with the constraint that the entries of each matrix lie in either $\S \otimes \bC e_{\T}$ or $\bC e_{\S} \otimes \T$.

\begin{lemma} \label{lem: commuting factorization norm}
    The sequence $\{\|\cdot\|_n^c\}$ defines an $\mathrm{L}^\infty$ matrix norm.
\end{lemma}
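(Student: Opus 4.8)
The plan is to verify the two defining axioms of an $\mathrm{L}^\infty$-matrix seminorm for $\{\|\cdot\|_n^c\}$ and then to upgrade the seminorm to a norm by exhibiting an injective completely contractive product representation, following the template of Proposition~\ref{prop: factorization seminorm}. First observe that the infimum defining $\|A\|_n^c$ is over a nonempty set: writing $A \in M_n(\V)$ as a matrix whose entries are finite sums of elementary tensors, one factors $A = X \odot_m Y$ with $X$ a matrix over $\S \otimes \bC e_{\T}$ and $Y$ a matrix over $\bC e_{\S} \otimes \T$, so $\|A\|_n^c < \infty$. The only feature distinguishing this from the generic factorization seminorm of Proposition~\ref{prop: factorization seminorm} is the homogeneity constraint that each factor have all of its entries in one of the two subspaces $\S \otimes \bC e_{\T}$ or $\bC e_{\S} \otimes \T$; thus the work is to check that the manipulations used there preserve this constraint.

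For the inequality $\|\alpha^* A \beta\|_n^c \le \|\alpha\|\,\|\beta\|\,\|A\|_n^c$ one prepends $\alpha^* \otimes e$ and appends $\beta \otimes e$ to a given factorization of $A$; since $e = e_{\S} \otimes e_{\T}$, these scalar-valued factors have entries lying in both subspaces and so respect the homogeneity constraint, and the estimate is then identical to Proposition~\ref{prop: factorization seminorm}. For the direct-sum identity $\|A \oplus B\|_{n+k}^c = \max(\|A\|_n^c, \|B\|_k^c)$, the bound $\|A \oplus B\|_{n+k}^c \ge \max(\|A\|_n^c,\|B\|_k^c)$ follows by compressing $A \oplus B$ with rectangular scalar matrices. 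For the reverse bound I would fix near-optimal homogeneous factorizations of $A$ and of $B$ and form $A \oplus B$ by taking direct sums factor-by-factor. Here a genuine point arises: to keep each direct-summed factor homogeneous, the $s$-th factor of $A$ and the $s$-th factor of $B$ must be of the same type. This can always be arranged because a unit factor $I \otimes e$ has entries that are scalar multiples of $e$ and hence may be regarded as belonging to either subspace; inserting such factors (which do not change the product of the norms, since $\|I \otimes e\| = 1$) lets one reconcile the alternating $\S$/$\T$ type patterns of the two factorizations and pad them to a common length. This alignment step is the only part not covered verbatim by Proposition~\ref{prop: factorization seminorm}, and it is the main thing to get right.

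Having established that $\{\|\cdot\|_n^c\}$ is an $\mathrm{L}^\infty$-matrix seminorm, I would prove positive-definiteness using the concrete realization of the commuting tensor product from Construction~\ref{const: c tensor}. Let $\pi \colon \S \otimes_c \T \to \bB(\H)$ be a unital complete order embedding obtained from the embedding into $C^*_u(\S) \otimes_{max} C^*_u(\T)$ together with Theorem~\ref{t:RCE}. Then $\pi$ is injective and completely isometric, and since the inclusions $s \mapsto s \otimes e_{\T}$ and $t \mapsto e_{\S} \otimes t$ are complete order embeddings, $\pi$ restricts to a complete isometry on each homogeneous class $\S \otimes \bC e_{\T}$ and $\bC e_{\S} \otimes \T$. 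Moreover $\pi$ is a product map for $m$: by the commuting-range property, $\pi(m(s,t)) = \pi(s \otimes t) = \pi(s \otimes e_{\T})\,\pi(e_{\S} \otimes t) = \pi(e_{\S} \otimes t)\,\pi(s \otimes e_{\T})$. Consequently, for any homogeneous permissible factorization $A = A_1 \odot_m \cdots \odot_m A_N$ we have $\pi^{(n)}(A) = \pi^{(n)}(A_1)\cdots \pi^{(n)}(A_N)$, and submultiplicativity of the operator norm together with the isometry of $\pi$ on each factor yields $\|\pi^{(n)}(A)\| \le \|A_1\|\cdots\|A_N\|$. Taking the infimum over factorizations gives $\|\pi^{(n)}(A)\| \le \|A\|_n^c$. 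Since $\pi$ is injective, $\|A\|_n^c = 0$ forces $\pi^{(n)}(A) = 0$ and hence $A = 0$; thus each $\|\cdot\|_n^c$ is a norm and $\{\|\cdot\|_n^c\}$ is an $\mathrm{L}^\infty$-matrix norm.
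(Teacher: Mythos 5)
Your proposal is correct and takes essentially the same route as the paper: it establishes the seminorm axioms by the template of Proposition~\ref{prop: factorization seminorm}, handling the homogeneity constraint in the direct-sum step by inserting unit factors to align and pad the two factorizations (the paper's ``inserting copies of the identity matrix and rescaling''), and it obtains positive-definiteness from the injective unital completely contractive product map into a concrete commuting-range realization of $\S \otimes_c \T$, exactly as the paper does via the canonical map $\varphi \colon \S \otimes \T \to \S \otimes_c \T$. No gaps to flag.
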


\begin{proof}
    The proof that $\|\cdot\|^c$ is an $\mathrm{L}^{\infty}$-seminorm is similar to the proof of Proposition \ref{prop: factorization seminorm}, except that to show the $\mathrm{L}^{\infty}$-condition, we must show that if $A = A_1 \odot \dots A_N$ and $B = B_1 \odot_m \dots \odot_m B_N$, we may write $A \oplus B = (A_1 \oplus B_1) \odot_m \dots \odot_m (A_n \oplus B_n)$ with the entries of $(A_i \oplus B_i)$ all in either $S$ or $T$. This can be achieved by inserting copies of the identity matrix and rescaling so that all matrices in the factorizations have the same norm. To see that $\|\cdot\|^c$ is a norm, consider $\varphi: \S \otimes \T \to \S \otimes_c \T$. This map is clearly unital. To see that it is contractive, let $\pi: \S \otimes_c \T \to \bB(\H)$ be a unital complete isometry with $\pi(\S)$ and $\pi(\T)$ commuting in $\bB(\H)$. Then if $A = A_1 \odot_m \dots \odot_m A_n$ is permissible, it follows that
    \[ \|A\|_{\S \otimes_c \T} = \|\pi(A)\| = \|\pi(a_1) \pi(a_2) \dots \pi(a_n) \| \leq \|\pi(a_1)\| \dots \|\pi(a_n)\| = \|a_1 \| \dots \|a_n\|. \]
    It follows that $\|A\|_{\S \otimes_c \T} \leq \|A\|^c$. Hence $\varphi$ is completely contractive and injective (since $\S \otimes_c \T$ is algebraically isomorphic to the algebraic tensor product $\S \otimes \T$).
\end{proof}

\begin{theorem}
    Suppose that $\S$ and $\T$ are operator systems. Then $m$ is a completely contractive product on the unital operator system $(\S \otimes \T, e, \|\cdot\|^c)$.
\end{theorem}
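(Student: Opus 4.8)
The plan is to treat complete contractivity of $m$ as the main content and to read off the unital operator space (and then operator system) structure from realization theorems already available. By Lemma \ref{lem: commuting factorization norm} the sequence $\{\|\cdot\|^c_n\}$ is an $\mathrm{L}^\infty$-matrix norm, so $(\S \otimes \T, \|\cdot\|^c)$ is an abstract operator space by Ruan's theorem. Composing the contractive unital injection $\varphi : (\S \otimes \T,\|\cdot\|^c) \to \S \otimes_c \T$ from that lemma with a commuting unital complete isometry of $\S \otimes_c \T$ into some $\bB(\H)$ produces an injective linear product map into a C*-algebra, which certifies that $m$ is non-degenerate, so the permissible factorizations defining $\|\cdot\|^c$ are well behaved. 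Before the main estimate I would note that $e = e_\S \otimes e_\T$ is a unit vector: the factorization $e = e_\S \odot_m e_\T$ into unit-norm factors gives $\|e\|^c \le 1$, while $\|e\|^c \ge \|\varphi(e)\| = 1$; hence $\|e\|^c = 1$, and $m(x,e) = m(e,x) = x$ by definition of $m$.

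The core step is to verify $\|A \odot_m B\|^c \le \|A\|^c \|B\|^c$ for every valid pair $(A,B)$, and here I would follow the proof of Theorem \ref{thm: factorization norm is unital}, taking care that the restriction to factors with entries in $\S \otimes \bC e_\T$ or $\bC e_\S \otimes \T$ is preserved. Given valid $(A,B)$ with $C = A \odot_m B$, choose admissible (restricted, permissible) factorizations $A = A_1 \odot_m \cdots \odot_m A_N$ and $B = B_1 \odot_m \cdots \odot_m B_M$. By Lemma \ref{l:permis} the concatenation $(A_1, \dots, A_N, B_1, \dots, B_M)$ is again permissible and computes $C$; since every factor in this longer tuple still has all entries in $\S \otimes \bC e_\T$ or $\bC e_\S \otimes \T$, it is admissible for the definition of $\|\cdot\|^c$. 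Hence $\|C\|^c \le \prod_s \|A_s\| \prod_t \|B_t\|$, and taking the infimum first over factorizations of $A$ and then of $B$ yields $\|C\|^c \le \|A\|^c \|B\|^c$, i.e.\ $m$ is completely contractive.

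With $\|e\|^c = 1$ and $m$ completely contractive in hand, Corollary \ref{cor: abstract product characterization} furnishes a unital complete isometry $\pi: (\S \otimes \T, \|\cdot\|^c) \to \bB(\H)$ with $\pi(m(a,b)) = \pi(a)\pi(b)$ for all $(a,b) \in \D$; in particular $(\S \otimes \T, e, \|\cdot\|^c)$ is a unital operator space and $m$ is a completely contractive partial product. To upgrade this to an operator system I would pass to the universal product C*-cover of Theorem \ref{thm: c-star-m universal property}, whose embedding $j: (\S \otimes \T, \|\cdot\|^c) \to C^*_m(\S \otimes \T)$ is a unital complete isometry. Because $\|\cdot\|^c$ restricts to the original operator system norms on $\S \otimes \bC e_\T$ and $\bC e_\S \otimes \T$ (the factorization estimate gives one inequality, composing with $\varphi$ the other), the restrictions of $j$ to these copies are unital complete isometries of operator systems, hence $*$-preserving; and since $m$ is symmetric the images of $\S$ and $\T$ commute in $C^*_m(\S \otimes \T)$, so $j(v^*) = j(v)^*$ on the spanning products $s \otimes t$. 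Thus $j(\S \otimes \T)$ is a self-adjoint unital subspace of a C*-algebra and $(\S \otimes \T, e, \|\cdot\|^c)$ is completely order isomorphic to an operator system.

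I expect the main obstacle to be the interplay between the restriction defining $\|\cdot\|^c$ and the generic factorization machinery: one must confirm that concatenating two admissible factorizations keeps all factors inside the copies of $\S$ and $\T$, so that Lemma \ref{l:permis} applies within the admissible class, and that valid pairs genuinely return entries to $\S \otimes \T$ under $m$. The promotion from unital operator space to operator system is the other delicate point, since a $*$-invariant unital operator space need not be an operator system in general; it is precisely the commutativity of the ranges of $\S$ and $\T$ in the product C*-cover that rules out the usual obstruction and secures self-adjointness of $j(\S \otimes \T)$.
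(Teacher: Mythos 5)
Your proposal is correct and follows essentially the same route as the paper: complete contractivity of $m$ via concatenating permissible factorizations (Lemma \ref{l:permis}, as in Theorem \ref{thm: factorization norm is unital}), realization by Corollary \ref{cor: abstract product characterization}, and self-adjointness of the image from commutativity of the ranges of $\S$ and $\T$ together with the fact that the unital completely isometric restrictions to $\S$ and $\T$ are $*$-preserving. Your detour through $C^*_m(\S\otimes\T)$ instead of a concrete representation $\pi:\S\otimes\T\to\bB(\H)$ is only cosmetic, and your explicit check that $\|\cdot\|^c$ restricts to the original operator system norms on $\S\otimes\bC e_\T$ and $\bC e_\S\otimes\T$ usefully fills in a step the paper asserts without comment.
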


\begin{proof}
    If $(A,B)$ is a valid pair, then $\|A \odot_m B \|^c \leq \|A\|^c \|B\|^c$ (as in the proof of Theorem \ref{thm: factorization norm is unital}). It follows that there exists a unital complete isometry $\pi: \S \otimes \T \to \bB(\H)$ such that $\pi(m(a,b)) = \pi(a)\pi(b)$. For $a \in \S$ and $b \in \T$, $\pi(a)\pi(b) = \pi(m(a,b)) = \pi(m(b,a)) = \pi(b) \pi(a)$. Since the restrictions of $\pi$ to $\S$ and $\T$ are unital and completely isometric, they are both self-adjoint. So \[ \pi(a \otimes b)^* = (\pi(a)\pi(b))^* = \pi(b)^* \pi(a)^* = \pi(b^*) \pi(a^*) = \pi(m(b^*,a^*)) = \pi(m(a^*,b^*)) = \pi(a^* \otimes b^*). \] So the image of $\S \otimes \T$ is an operator system with adjoint satisfying $(a \otimes b)^* = a^* \otimes b^*$.
\end{proof}

\begin{corollary} \label{cor: commuting tensor norm}
    The operator system $(\S \otimes \T, \|\cdot\|^c)$ is unitally isometrically isomorphic to $\S \otimes_c \T$.
\end{corollary}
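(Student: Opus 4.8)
The plan is to prove the stronger statement that the identity map on the algebraic tensor product $\S \otimes \T$ is a unital complete isometry from $(\S \otimes \T, \{\|\cdot\|_n^c\})$ onto $\S \otimes_c \T$, i.e.\ that the factorization norm $\{\|\cdot\|_n^c\}$ coincides with the matrix norm of the commuting tensor product. One inequality is already available: in the proof of Lemma \ref{lem: commuting factorization norm} a unital complete isometry of $\S \otimes_c \T$ into some $\bB(\H)$ with commuting ranges was used to conclude $\|A\|_{\S \otimes_c \T} \leq \|A\|_n^c$ for every $A \in M_n(\S \otimes \T)$. Hence the entire content of the corollary is the reverse inequality $\|A\|_n^c \leq \|A\|_{\S \otimes_c \T}$.

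For that reverse inequality I would invoke the theorem immediately preceding the corollary, which supplies a unital complete isometry $\pi : (\S \otimes \T, \|\cdot\|^c) \to \bB(\H)$ satisfying $\pi(a \otimes b) = \pi(a \otimes e_{\T})\,\pi(e_{\S} \otimes b) = \pi(e_{\S} \otimes b)\,\pi(a \otimes e_{\T})$ for all $a \in \S$, $b \in \T$. Identifying $\S$ with $\S \otimes \bC e_{\T}$ and $\T$ with $\bC e_{\S} \otimes \T$, set $\varphi := \pi|_{\S}$ and $\psi := \pi|_{\T}$. Since $\pi$ is a unital complete isometry on an operator system, its restrictions to the sub-operator-systems $\S$ and $\T$ are unital complete order embeddings, hence ucp; and the displayed relation records exactly that $\varphi$ and $\psi$ have commuting ranges, with $(\varphi \cdot \psi)(a \otimes b) = \varphi(a)\psi(b) = \pi(a \otimes b)$.

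Now I would feed this pair into the defining supremum for the commuting tensor norm recalled in Section \ref{s:prelim}: for every pair of ucp maps with commuting ranges and every $A \in M_n(\S \otimes \T)$ one has $\|(\varphi \cdot \psi)(A)\| \leq \|A\|_{\S \otimes_c \T}$. Because $\varphi \cdot \psi = \pi$ on all of $\S \otimes \T$ (agreeing on elementary tensors and extended by bilinearity) and $\pi$ is completely isometric for $\|\cdot\|^c$, this reads $\|A\|_n^c = \|\pi^{(n)}(A)\| \leq \|A\|_{\S \otimes_c \T}$, which is the missing inequality. Combining the two bounds gives $\|A\|_n^c = \|A\|_{n,\S \otimes_c \T}$ for all $n$ and $A$, so the identity map is a unital complete isometry; as both operator systems carry the same involution $(a \otimes b)^* = a^* \otimes b^*$ (established for the left-hand side in the preceding theorem), it is a unital complete order isomorphism.

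There is no genuine obstacle here, since the substantive analytic work has already been absorbed into Lemma \ref{lem: commuting factorization norm} and the preceding theorem; the corollary is an assembly step. The only points demanding care are the bookkeeping verifications that $\varphi$ and $\psi$ really are ucp with commuting ranges and that $\varphi \cdot \psi$ reproduces $\pi$ on the whole of $\S \otimes \T$ rather than merely on elementary tensors. Matching these facts against the supremum definition of $\otimes_c$ then closes the argument.
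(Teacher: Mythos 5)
Your proposal is correct and follows essentially the same route as the paper: one direction is Lemma \ref{lem: commuting factorization norm}, and the reverse inequality comes from restricting the product representation $\pi$ of $(\S \otimes \T, \|\cdot\|^c)$ to $\S$ and $\T$ to obtain a commuting pair of ucp maps, then appealing to the supremum characterization (universal property) of $\S \otimes_c \T$. You merely spell out more explicitly the bookkeeping (that $\varphi$, $\psi$ are ucp with commuting ranges and that $\varphi \cdot \psi = \pi$) which the paper compresses into a one-line citation of the universal property.
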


\begin{proof}
    In the previous proof, we saw that $\pi(a)$ and $\pi(b)$ commute in any product representation of $\S \otimes \T$ on $\bB(\H)$. By the universal property of $\S \otimes_c \T$, the identity map from $\S \otimes_c \T$ to $(\S \otimes \T, \|\cdot\|^c)$ is completely contractive. However, we showed that the identity from $(\S \otimes \T, \|\cdot\|^c)$ to $\S \otimes_c \T$ is completely contractive in the Lemma above. So these operator systems are unitally isometrically isomorphic.
\end{proof}

It is an open problem whether or not the commuting tensor product is associative, i.e. if $(\S \otimes_c \T) \otimes_c \R \cong \S \otimes_c (\T \otimes_c \R)$ for all operator systems $\S, \T$, and $\R$. The matrix norms described above for the commuting tensor product suggest an analytic approach to this problem. However, it still remains unclear how to compare the matrix norms for the spaces $(\S \otimes_c \T) \otimes_c \R$ and $\S \otimes_c (\T \otimes_c \R)$. For example, an element $x \in \S \otimes_c (\T \otimes_c \R)$ admits a factorization $x = x_1 \odot x_2 \odot \dots \odot x_n$ with each $x_i$ an element of either $\S$ or $\T \otimes \R$ and such that $\|x\|$ is approximated by $\prod \|x_i\|$. Moreover, each $x_i$ lying in $\T \otimes \R$ admits a factorization $x_i = y_{i1} \odot \dots \odot y_{in_i}$ with each $y_{ij}$ an element of either $\T$ or $\R$ and such that $\|x_i\|$ is approximated by $\prod_j \|y_{ij}\|$. On the other hand, if one could show that every permissible factorization $x = z_1 \odot \dots \odot z_k$ of $x$ where each $z_i$ lies in $\S$, $\T$, or $\R$ and the multiplication is ``triple-commuting'' (so that $m(s,t)=m(t,s), m(s,r)=m(r,s)$, and $m(t,r)=m(r,t)$) can be refactored as a permissible factorization with respect to the product in $(\S \otimes_c \T) \otimes_c \R$, then we could compare the two norms. However, it is not clear if such triple-commuting factorizations can be regrouped to obtain a permissible factorization over $(\S \otimes_c \T) \otimes_c \R$. 

\subsection{Group operator systems}

First recall that a finitely presented group has a presentation with generators $\{a_1, \dots, a_n\}$ and relations of the form $a_i^x a_j^y = a_k^z$ where $x,y,z \in \{-1,0,1\}$. For instance, if $G$ is finitely presented and has a relation of the form $a_1 a_2 a_3 = a_4$, then by introducing an additional generator $u = a_1 a_2$ and new relations $a_1 a_2 = u$ and $u a_3 = a_4$, the relation $a_1 a_2 a_3 = a_4$ can be removed from the presentation and replaced with the relations $a_1 a_2 = u$ and $u a_3 = a_4$. By induction, similar modifications can be made on any finitely presented group to reduce the size of the words in the given relations by introducing new generators and relations.

Thus, suppose that $G$ is a finitely presented group with generators $\{a_1, \dots, a_n\}$ as in the form described in the previous paragraph. Let $\S = \text{span} \{I, a_1, \dots, a_n, a_1^{-1}, \dots, a_n^{-1}\}$ in the group algebra $\bC[G]$. Then $\S$ is a $*$-vector space with the usual involution given by \[ (\lambda_0 I + \sum \lambda_i a_i + \sum \mu_i a_i^{-1})^* = \overline{\lambda_0} I + \sum \overline{\lambda_i} a_i^{-1} + \sum \overline{\mu_i} a_i. \] Let $\D \subseteq \S \times \S$ denote all pairs $(s,t)$ such that $s,t \in \S$ and $st \in \S$, where the product is taken in the group algebra. Define $m: \D \to \S$ by $m(x,y) = xy$. Then $m$ is a partial product.

We wish to endow $\S$ with an operator space structure making $m$ into a completely contractive product. Such operator space norms exist --- for instance, $S$ inherits such an operator space norm by its inclusion into the universal group C*-algebra $C^*(G)$. However, by using factorization norms, we can describe such an operator space structure somewhat explicitly. In fact, we will show that the operator space norm we describe agrees with the one inherited by $\S$ in $C^*(G)$.

To define the desired operator space structure on $\S$, we will first define an intermediate operator space structure on $\S$. We will then replace the intermediate operator space structure with another by considering permissible factorizations and obtain the desired results by appealing to Proposition \ref{prop: factorization seminorm}. The intermediate operator space structure is defined as follows. First, endow the $(2n+1)$-dimensional Banach space $\ell^1_{2n+1}$ with its maximal operator space structure $\text{MAX}(\ell^1_{2n+1})$, i.e. by $M_n(\ell^1_{2n+1})$ with the matrix norms \[ \|x\|_n = \inf \{ \|\alpha\| \|\beta\| : x = \alpha \text{diag}(y_1, \dots, y_k) \beta \} \] where the infimum is over all $y_1, \dots, y_k \in \ell^1_{2n+1}$ with $\|y_i\| \leq 1$ for each $i=1,2,\dots,k$. By \cite{FKPTgroups2014} (c.f. \cite[Chapter 14]{paulsen2002completely}), this operator space can be identified with the span of $u_1, \dots, u_{2n+1}$ in the universal group C*-algebra for the free group on $2n+1$ generators. Let $\{e_{-n}, \dots, e_{-1}, e_0, e_1, \dots e_n\}$ denote the canonical basis for $\ell^1_{2n+1}$. To obtain an operator space structure on $\S$, define $\pi: \ell^1_{2n+1} \to \S$ by letting $\pi(e_0) = I$ and, for $i > 0$, letting $\pi(e_i) = a_i$ and $\pi(e_{-1}) = a_i^{-1}$ then extend linearly. Let $\J = \ker(\pi)$ (which may be trivial). Then we obtain an intermediate operator space structure on $\S$ by identifying $\S$ with the operator space quotient $\text{MAX}(\ell^1_{2n+1})/\J$.

\begin{lemma} \label{lem: intermediate norm}
    Let $G$ and $\S$ be as described above. Then the linear map from $\hat{\pi}: \text{MAX}(\ell^1_{2n+1})/\J \to C^*(G)$ satisfying $\hat{\pi}(e_0 + \J) = I$ and, for $i > 0$, $\hat{\pi}(e_i + \J) = a_i$ and $\hat{\pi}(e_{-i} + \J) = a_i^{-1}$, is completely contractive.
\end{lemma}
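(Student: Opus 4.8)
The plan is to realize $\hat{\pi}$ as the factorization, through the operator space quotient, of the restriction of a genuine unital $*$-homomorphism between full group C*-algebras. Complete contractivity will then be inherited entirely from universal properties already recorded in the excerpt, with no analytic estimate required.

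First I would invoke the identification from \cite{FKPTgroups2014} that realizes $\text{MAX}(\ell^1_{2n+1})$ as the span of the canonical unitary generators $w_{-n},\dots,w_{-1},w_0,w_1,\dots,w_n$ of $C^*(\bF_{2n+1})$, where $\bF_{2n+1}$ is the free group on $2n+1$ generators and the canonical basis vector $e_j$ is identified with $w_j$. Under this identification, the map $\pi:\text{MAX}(\ell^1_{2n+1}) \to \S \subseteq C^*(G)$ is the restriction to $\text{span}\{w_{-n},\dots,w_n\}$ of the assignment $w_0 \mapsto I$, $w_i \mapsto a_i$, and $w_{-i}\mapsto a_i^{-1}$ for $i>0$.

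Second, since $\bF_{2n+1}$ is free, the assignment of its generators to the group elements $e_G, a_1,\dots,a_n,a_1^{-1},\dots,a_n^{-1}$ of $G$, sending the zeroth generator to the identity $e_G$, extends uniquely to a group homomorphism $\Phi:\bF_{2n+1}\to G$. By the universal property of the full group C*-algebra, $\Phi$ induces a unital $*$-homomorphism $\rho: C^*(\bF_{2n+1}) \to C^*(G)$ with $\rho(w_0)=I$, $\rho(w_i)=a_i$, and $\rho(w_{-i})=a_i^{-1}$. As a unital $*$-homomorphism, $\rho$ is ucp and hence completely contractive, and its restriction to $\text{span}\{w_{-n},\dots,w_n\}$ coincides with $\pi$ under the identification above.

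Finally I would conclude by factoring. Because $\rho$ is completely contractive and $\J=\ker(\pi)$ is contained in the kernel of $\rho$ restricted to $\text{span}\{w_{-n},\dots,w_n\}$ by construction, the universal property of the operator space quotient $(\text{MAX}(\ell^1_{2n+1})/\J)_{osp}$ shows that the induced map on the quotient is completely contractive, with $\|\hat{\pi}\|_{cb}\le\|\rho\|_{cb}\le 1$; moreover this induced map sends $e_j+\J \mapsto \rho(w_j)$, so it is exactly the map $\hat{\pi}$ of the statement. The only point requiring care is the bookkeeping in the first step, namely verifying that sending the zeroth free generator to the group identity produces precisely the constraint $\hat{\pi}(e_0+\J)=I$ while the remaining generators land on the unitaries $a_i^{\pm1}$; after that matching, the two universal properties (of $C^*(\bF_{2n+1})$ and of the operator space quotient) do all the work. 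I do not expect a genuine obstacle here, as the content is entirely in aligning the identification of \cite{FKPTgroups2014} with the group-theoretic assignment.
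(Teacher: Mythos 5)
Your proposal is correct and follows essentially the same route as the paper's proof: identify $\text{MAX}(\ell^1_{2n+1})$ with the span of the free unitary generators in $C^*(\bF_{2n+1})$ via \cite{FKPTgroups2014}, use freeness and the universal property of the full group C*-algebra to produce a unital $*$-homomorphism onto $C^*(G)$ sending the zeroth generator to $I$, and then factor the restriction through the operator space quotient by $\J$. The only cosmetic difference is that you spell out the bookkeeping of the identification more explicitly than the paper does.
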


\begin{proof}
    Identifying $e_{-n}, \dots, e_{-1}, e_0, e_1, \dots, e_n$ with the generators of $\mathbb{F}_{2n+1}$, the universal property of the free group implies that the map $\pi: e_0 \mapsto I, e_i \mapsto a_i, e_{-i} \mapsto a_i^{-1}$ extends to a group homomorphism from $\mathbb{F}_n$ to $G$. Hence, $\pi$ further extends to a $*$-homomorphism $\pi: C^*(\mathbb{F}_{2n+1}) \to C^*(G)$, which is completely contractive. Identifying $\text{MAX}(\ell^1_{2n+1})$ with the linear span of $e_{-n}, \dots, e_n$, we see that the restriction of $\pi$ to $\text{MAX}(\ell^1_{2n+1})$ is completely contractive. As $\J$ is the kernel of this restriction, $\pi'$ is completely contractive by the universal property of the operator space quotient.
\end{proof}

Letting $\{\|\cdot\|_n\}$ denote the matrix norms on $\S$ given by the identification with $\text{MAX}(\ell^1_{2n+1})/\J$ described above, we now consider the induced factorization semi-norms
\[ \|A\|_n^m := \inf \{ \|a_1\| \|a_2\| \dots \|a_n\| : A = a_1 \odot_m \dots \odot_m a_n \} \]
where the infimum is over all permissible factorizations.

\begin{theorem} \label{t:factorization-norm-group}
    The factorization seminorm $\{\|\cdot\|_n^m\}$ defined above is an $\mathrm{L}^{\infty}$-matrix norm. Moreover, this matrix norm agrees with the matrix norm $\S$ inherits from the universal group C*-algebra $C^*(G)$.
\end{theorem}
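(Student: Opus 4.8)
The plan is to sandwich the matrix norm $\|\cdot\|^{C^*(G)}$ that $\S$ inherits from the canonical inclusion $\iota\colon\S\hookrightarrow C^*(G)$ between $\|\cdot\|^m$ and itself, proving the two inequalities separately; normhood will fall out of the first. First I would observe that $\iota$ is an \emph{injective unital completely contractive product map} with respect to the intermediate operator space structure on $\S$: complete contractivity is exactly Lemma~\ref{lem: intermediate norm}; it is unital by construction; it is a product map because for $(a,b)\in\D$ the element $m(a,b)$ is by definition the product $ab$ taken in $\bC[G]$, whose image in $C^*(G)$ is $\iota(a)\iota(b)$; and it is injective because distinct group elements remain linearly independent in $C^*(G)$, so that $\ker\iota=\J$. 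Feeding $i=\iota$ into the ``Moreover'' clause of Proposition~\ref{prop: factorization seminorm} immediately yields that $\{\|\cdot\|_n^m\}$ is an $\mathrm{L}^\infty$-matrix \emph{norm} and that $\iota$ is completely contractive for it; that is, $\|A\|_n^{C^*(G)}=\|\iota^{(n)}(A)\|\le\|A\|_n^m$ for all $A$.

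For the reverse inequality I would pass through the realization theorem. By Theorem~\ref{thm: factorization norm is unital}, $(\S,e,\{\|\cdot\|_n^m\})$ is a unital operator space on which $m$ is completely contractive, so Corollary~\ref{cor: abstract product characterization} provides a unital complete isometry $\pi\colon(\S,\|\cdot\|^m)\to\bB(\H)$ with $\pi(m(a,b))=\pi(a)\pi(b)$ for every $(a,b)\in\D$. The crucial point is that each $\pi(a_i)$ is a \emph{unitary}. Indeed $\|a_i\|^m$ and $\|a_i^{-1}\|^m$ are both at most $1$, since the factorization norm is dominated by the intermediate norm and $\|e_{\pm i}\|_{\ell^1}=1$; hence $\pi(a_i)$ and $\pi(a_i^{-1})$ are contractions. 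Since $(a_i,a_i^{-1}),(a_i^{-1},a_i)\in\D$ with $m(a_i,a_i^{-1})=m(a_i^{-1},a_i)=I$, we get $\pi(a_i)\pi(a_i^{-1})=\pi(a_i^{-1})\pi(a_i)=I$, so $\pi(a_i)$ is invertible with inverse $\pi(a_i^{-1})$. A contraction with a contractive two-sided inverse satisfies $\|x\|=\|\pi(a_i^{-1})\pi(a_i)x\|\le\|\pi(a_i)x\|\le\|x\|$, hence is a surjective isometry; thus $U_i:=\pi(a_i)$ is unitary and $U_i^{-1}=\pi(a_i^{-1})$.

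Finally I would check that the $U_i$ satisfy the relations of $G$ and so assemble into a representation. For each defining relation $a_i^x a_j^y=a_k^z$ (with $x,y,z\in\{-1,0,1\}$), the pair $(a_i^x,a_j^y)$ lies in $\D$ and $\pi$ carries the identity $m(a_i^x,a_j^y)=a_k^z$ to $U_i^xU_j^y=U_k^z$, using $\pi(a_\ell^{\pm1})=U_\ell^{\pm1}$. Therefore $a_i\mapsto U_i$ preserves all relations of $G$ and extends to a unitary representation $\rho\colon G\to\bB(\H)$, hence to a $*$-homomorphism $\bar\rho\colon C^*(G)\to\bB(\H)$ with $\bar\rho\circ\iota=\pi$ on $\S$. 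As $\bar\rho$ is completely contractive, $\|A\|_n^m=\|\pi^{(n)}(A)\|=\|\bar\rho^{(n)}(\iota^{(n)}(A))\|\le\|\iota^{(n)}(A)\|=\|A\|_n^{C^*(G)}$ for every $A\in M_n(\S)$. Combined with the first paragraph, this gives $\|\cdot\|^m=\|\cdot\|^{C^*(G)}$.

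The main obstacle is the reverse inequality, and its resolution hinges entirely on the elementary but decisive observation that a Hilbert-space contraction with a contractive inverse must be unitary: this is what upgrades the abstract product realization $\pi$ of Corollary~\ref{cor: abstract product characterization} into a genuine unitary representation of $G$, which then factors through $C^*(G)$. The remaining ingredients --- complete contractivity and injectivity of $\iota$ and the bookkeeping with the short relations --- are routine, and the same template recovers the free unitary and Cuntz examples treated earlier.
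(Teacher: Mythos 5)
Your proposal is correct and follows essentially the same route as the paper's proof: one inequality and normhood come from feeding the inclusion $\S \hookrightarrow C^*(G)$ (completely contractive by Lemma \ref{lem: intermediate norm}, injective, and a product map) into the ``Moreover'' clause of Proposition \ref{prop: factorization seminorm}, while the reverse inequality comes from the realization of Corollary \ref{cor: abstract product characterization}, whose images of the generators are unitaries satisfying the relations of $G$ and hence induce a $*$-homomorphism out of $C^*(G)$ whose restriction inverts the inclusion. The only difference is cosmetic: you spell out why each $\pi(a_i)$ is unitary (a contraction with contractive two-sided inverse, using $\|a_i\|_1^m \le 1$ from the $\ell^1$ structure of the intermediate norm), a step the paper asserts without justification.
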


\begin{proof}
    From Proposition \ref{prop: factorization seminorm}, $\{\|\cdot\|_n^m\}$ is a matrix seminorm, since $\{\|\cdot\|\}$ is an operator space norm on $\S$. Notice that the embedding $i: \S \to C^*(G)$ is a product map. By Lemma \ref{lem: intermediate norm}, $i$ is completely contractive when $\S$ is endowed with the intermediate matrix norm $\{\|\cdot\|\}$. It follows that the factorization seminorms $\{\|\cdot\|_n^m\}$ is an $\mathrm{L}^{\infty}$-matrix \emph{norm} and that $i$ is completely contractive when $\S$ is endowed with the matrix norm $\{\|\cdot\|_n^m\}$, by Proposition \ref{prop: factorization seminorm}.

    Finally, we show that the matrix norm $\{\|\cdot\|_n^m\}$ agrees with the operator space structure $\S$ inherits from the embedding $i: \S \to C^*(G)$. We have already shown that $i$ is completely contractive when $\S$ is endowed with $\{\|\cdot\|_n^m\}$, so we wish to show that $i^{-1}$ is also completely contractive on $i(\S)$. By Theorem \ref{thm: factorization norm is unital}, we see that $(\S,\{\|\cdot\|_n^m\})$ is a unital operator space and $m$ is a completely contractive product. By Corollary \ref{cor: abstract product characterization}, there exists a complete isometry $\rho: \S \to \bB(\H)$ such that $\rho(I) = I_{\H}$, $\rho(a_i)$ is unitary for each $i$ (since $m(a_i,a_i^{-1})=m(a_i^{-1},a_i)=I$), and $\rho(a_i^x)\rho(a_j^y) = \rho(a_k^z)$ for each relation $a_i^x a_j^y = a_k^z$ (since $m(a_i^x, a_j^y)=a_k^z)$). Therefore the group generated by $\{\rho(a_1), \dots, \rho(a_n)\}$ in $\bB(\H)$ is isomorphic to $G$. By the universal property of $C^*(G)$, the map $j: a_i \mapsto \rho(a_i)$ extends to a $*$-homomorphism $\psi: C^*(G) \to \bB(\H)$. But the restriction of this $*$-homomorphism to $i(\S)$ is just $i^{-1}$ (upon identifying $S$ completely isometrically with $\rho(\S) \subseteq \bB(\H)$). It follows that $i^{-1}$ is completely contractive.
\end{proof}

\subsection{Operator systems spanned by projections}

In \cite{AraizaRussellProjs}, an abstract characterization was presented for operator systems spanned by their unit and a finite number of projections $p_1, \dots, p_n$ satisfying given linear relations. These characterizations were given in terms of the matrix ordering of the operator system, which is constructed as an inductive limit of matrix orderings. Here, we give an alternative construction in terms of the matrix norms using permissible factorizations. As a corollary, we obtain new matrix norm characterizations for quantum commuting correlations.

For the reader's convenience, we summarize the construction described in \cite{AraizaRussellProjs}. Suppose there exists a Hilbert space $\H$ and projections $P_1, \dots, P_n \in \bB(\H)$ which satisfy linear relations of the form $\sum_{i=0}^n \alpha_{i,j} P_i = 0$ for $j=1,\dots,m$, where we let $P_0 := I$ for convenience. Let $\V$ denote the universal $*$-vector space spanned by self-adjoint vectors $p_0, p_1, \dots, p_n$ satisfying the same relations $\sum_{i=0}^n \alpha_{i,j} p_i = 0$. If we let $C$ denote the cone generated by $\{p_0, p_1, \dots, p_n\}$, then it follows that $C$ is a proper cone with order unit $e := p_0$. Indeed, the cone $C$ is a subcone of the proper cone of positive operators in the concrete operator system spanned by $I, P_1, \dots, P_n$ in $\bB(\H)$ and the map $\pi: p_i \mapsto P_i$ for all $i=0,1,\dots,n$ is injective.

Let $\{\|\cdot\|_n\}_{n=1}^{\infty}$ denote the matrix norm on the operator system $\text{OMAX}(\V)$ (see \cite{PaulsenTodorovTomforde2011OSS} for this definition). Let $\V_i = \text{span}\{I,p_i\} \subseteq \V$ for each $i=1,2,\dots,n$. Setting $\D = (\bC \times \V) \cup (\V \times \bC) \cup_{i=1}^n (\V_i \times \V_i)$, we extend the trivial product by defining the partial product $m: \D \to \V$ to satisfy $m(\lambda p_i, \mu p_i) = \lambda \mu p_i$ and extending to a unital bilinear map on $\D$. Since $\pi: \V \to \bB(\H)$ is ucp, and since each $p_i$ is a projection, $\pi$ is a product map. It follows from Proposition \ref{prop: factorization seminorm} that $\pi$ is completely contractive with respect to the factorization seminorm
\[ \|A\|_{n,k}^m := \inf \{ \|a_1\| \|a_2\| \dots \|a_n\| : A = a_1 \odot_m \dots \odot_m a_n \} \]
where the infimum is over all permissible factorizations. Since the Hilbert space $\H$ and projections $P_1, \dots, P_n$ were arbitrary in the above arguments, we have proven the following.

\begin{theorem} \label{thm: projection op sys}
    Let $\V$ denote the operator system constructed above. Then $(\V, \{\|\cdot\|_n^m\})$ is an operator system with completely contractive partial product $m$. Moreover, if $\H$ is a Hilbert space and $P_1, \dots, P_n \in \bB(\H)$ are projections satisfying the relations $\sum_{i=1}^n \alpha_{i,j} P_i = 0$ for each $j=1,\dots,m$, then the map $\pi: \V \to \bB(\H)$ given by $\pi(p_i)=P_i$ and $\pi(e)=I$ is a ucp product map.
\end{theorem}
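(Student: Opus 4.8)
The plan is to assemble the statement from the two engines already built in this section, Proposition \ref{prop: factorization seminorm} and Theorem \ref{thm: factorization norm is unital}, and then to supply the one genuinely new ingredient: that the resulting unital operator space is in fact self-adjoint, hence an operator system. First I would observe that the concrete representation from the construction, $\pi_0 : \V \to \bB(\H_0)$ with $\pi_0(p_i) = P_i$ and $\pi_0(e) = I$, is an \emph{injective} unital completely contractive product map for the $\mathrm{OMAX}$ structure: it is unital and carries the generating cone $C$ into positive operators, so by the maximality of $\mathrm{OMAX}(\V)$ (see \cite{PaulsenTodorovTomforde2011OSS}) it is ucp, hence ucc; it is a product map because $P_i^2 = P_i = \pi_0(m(p_i,p_i))$; and it is injective by construction. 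Proposition \ref{prop: factorization seminorm} then promotes $\{\|\cdot\|_n^m\}$ from a matrix seminorm to a matrix \emph{norm}. Theorem \ref{thm: factorization norm is unital} applies verbatim to conclude that $(\V, e, \{\|\cdot\|_n^m\})$ is a unital operator space on which $m$ is a completely contractive partial product, realized by a unital complete isometry $\rho$ satisfying $\rho(m(a,b)) = \rho(a)\rho(b)$; in particular each $\rho(p_i)$ is an idempotent.

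The crux --- and the step I expect to be the main obstacle --- is upgrading ``unital operator space'' to ``operator system''. My plan is to show that the factorization norm is $*$-invariant. The involution on $\V$ extends to $\F_m(\V) = \F(\V)/\J_m$ provided $\J_m$ is a $*$-ideal, and this holds here: on the only nontrivial generating pairs $(p_i,p_i) \in \D$ one has $m(p_i,p_i)^* = p_i^* = p_i = m(p_i^*,p_i^*)$ since each $p_i$ is self-adjoint, and the trivial pairs are handled the same way, so $(a \cdot b - m(a,b))^* = b^* \cdot a^* - m(b^*,a^*) \in \J_m$ for every $(a,b)\in\D$. Consequently a permissible factorization $A = a_1 \odot_m \cdots \odot_m a_N$ transposes to a permissible factorization $A^* = a_N^* \odot_m \cdots \odot_m a_1^*$ whose factors carry the \emph{same} $\mathrm{OMAX}$ norms (the $\mathrm{OMAX}$ matrix norm being $*$-invariant). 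Taking infima gives $\|A^*\|_n^m = \|A\|_n^m$. With $e^* = e$ and the involution completely isometric, $(\V, \{\|\cdot\|_n^m\})$ is a self-adjoint unital operator space, and hence coincides with the operator system $\V + \V^*$ it generates (cf. the discussion in Section \ref{s:prelim} and \cite{RUSSELL2017}); thus it is an operator system and the idempotents $\rho(p_i)$ are in fact self-adjoint projections realizing $m$.

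Finally, for the ``moreover'' clause I would argue as follows. Given any Hilbert space $\H$ and projections $P_1,\dots,P_n$ obeying the relations $\sum_i \alpha_{i,j}P_i = 0$, the assignment $p_i \mapsto P_i$, $e \mapsto I$ is well defined because the $p_i$ satisfy the universal relations, is unital, and sends the generating cone to positive operators; by maximality of $\mathrm{OMAX}$ it is ucp, hence ucc, and it is a product map since $P_i^2 = P_i$. The concluding assertion of Proposition \ref{prop: factorization seminorm} then shows $\pi$ remains completely contractive once $\V$ carries $\{\|\cdot\|_n^m\}$. Since $(\V, \{\|\cdot\|_n^m\})$ is now an operator system and $\pi$ is unital and completely contractive, $\pi$ is automatically completely positive; being a product map by construction, it is the desired ucp product map. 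The only delicate point in the whole argument is the $*$-invariance of the factorization norm in the second paragraph, which records that the entire construction --- domain, partial product, and $\mathrm{OMAX}$ norm --- is stable under the involution; everything else is a direct application of the machinery already established.
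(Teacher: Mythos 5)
Your scaffolding coincides with the paper's own argument, which is carried out in the discussion immediately preceding the theorem: the paper likewise obtains an injective ucp (hence ucc) product map from the assumed concrete realization by projections (positivity on the cone $C$ together with the maximality property of $\mathrm{OMAX}$), invokes Proposition \ref{prop: factorization seminorm} to upgrade the factorization seminorm to a matrix norm and to get complete contractivity of every such $\pi$, uses Theorem \ref{thm: factorization norm is unital} for the unital operator space structure and complete contractivity of $m$, and settles the ``moreover'' clause exactly as you do, by noting that $\H$ and $P_1,\dots,P_n$ were arbitrary and that a unital complete contraction from an operator system into $\bB(\H)$ is automatically completely positive. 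Your verifications that $\J_m$ is a $*$-ideal, that permissible factorizations transpose to permissible factorizations, and that consequently $\|A^*\|_n^m=\|A\|_n^m$, are all correct.

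The genuine gap is the final inference of your second paragraph: a unital operator space carrying a completely isometric, unital, conjugate-linear involution need \emph{not} be an operator system, so $*$-invariance of the factorization norms does not by itself upgrade $(\V,\{\|\cdot\|_n^m\})$ from a unital operator space to an operator system. Concretely, let $\V_0=\text{span}\{1,z\}\subseteq C(\bT)$ with the involution $\theta(\alpha 1+\beta z)=\bar\alpha 1+\bar\beta z$; using the symmetry $w\mapsto\bar w$ of the circle one checks that $\theta$ is completely isometric, fixes the unit, and fixes $z$. If $\V_0$ were an operator system with $\theta$ as its adjoint, then $z$ would be self-adjoint, forcing $\|1+iz\|^2=1+\|z\|^2=2$ (equivalently, apply the Blecher--Neal norm criterion for self-adjointness recorded in the remark following Theorem \ref{thm: Unit characterization} with $t=1$); but in fact $\|1+iz\|=\sup_{|w|=1}|1+iw|=2$. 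So an element fixed by a completely isometric involution need not be self-adjoint in the operator-space sense, and your appeal to the uniqueness of $\V+\V^*$ does not close this. A bridge that does work: by Corollary \ref{cor: abstract product characterization} there is a unital complete isometry $\rho$ with $\rho(m(a,b))=\rho(a)\rho(b)$, so each $\rho(p_i)$ is an idempotent; since the relations place $0\le p_i\le e$ in the AOU space underlying $\mathrm{OMAX}(\V)$ (as in the intended applications, e.g.\ $\S_{n,k}$, where $e=\sum_{a,b}Q(a,b|x,y)$ makes $e-Q(a,b|x,y)$ a sum of generators), one gets $\|\rho(p_i)\|\le\|p_i\|_{\mathrm{OMAX}}\le 1$, and a contractive idempotent on a Hilbert space is an orthogonal projection. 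Hence $\rho(\V)$ is a self-adjoint unital subspace of $\bB(\H)$, i.e.\ a concrete operator system, and the abstract involution agrees with the concrete adjoint on the generators and therefore on all of $\V$. It is worth noting that the published text passes over the operator-system upgrade silently, so you correctly identified where the remaining work lies --- but the $*$-invariance argument you supply is precisely the step that fails.
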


As a corollary to the above result, we obtain a new operator space characterization for the set of quantum commuting correlations in terms of factorization norms. We summarize some definitions and known results concerning quantum commuting correlations before stating this result.

Let $n, k \in \mathbb{N}$ be fixed positive integers. A (bipartite) \textit{quantum commuting correlation} is a tuple $( \ p(a,b|x,y) : a,b \in [k], x,y \in [n] \ )$ (where $[m] := \{1,2,\dots,m\}$ for an integer $m \in \mathbb{N}$) arising in the following manner: there exists a C*-algebra $\A$, projection valued measures $\{E_{x,a}\}_{a=1}^k, \{F_{y,b}\}_{b=1}^k$ in $\A$ for each $x,y \in [n]$ satisfying $[E_{x,a},F_{y,b}] = 0$ for all $a,b \in [k]$, and a state $\varphi: \A \to \mathbb{C}$ such that $p(a,b|x,y) = \varphi(E_{x,a} F_{y,b})$ for all $a,b \in [k]$ and $x,y \in [n]$. Quantum commuting correlations arise as joint probability distributions modeling the simultaneous measurements of two independent observers sharing a possibly entangled quantum state in the Haag-Kastler model of quantum mechanics \cite{junge2011connes}. Distinguishing these correlations from those arising in the older tensor-product model of quantum mechanics was shown to be equivalent to Connes' embedding problem in \cite{ozawa2013connes}.

To obtain quantum commuting correlations from operator systems, we recall the concept of a \textit{quantum commuting operator system} from \cite{AraizaRussellAbs23}. This is an operator system $\S$ spanned by positive operators $Q(a,b|x,y)$ satisfying the following conditions:
\begin{enumerate}
    \item for each $x,y \in [n]$, $\sum_{a,b} Q(a,b|x,y) = I$,
    \item the marginal operator $E(a|x) := \sum_{b=1}^k Q(a,b|x,y)$ is well-defined, i.e. the sum is independent of the choice of $y \in [n]$.
    \item the marginal operator $F(b|y) := \sum_{a=1}^k Q(a,b|x,y)$ is well-defined, i.e. the sum is independent of the choice of $x \in [n]$.
    \item the image of $Q(a,b|x,y)$ is a projection in $C^*_e(\S)$ for each $a,b \in [k]$ and $x,y \in [n]$.
\end{enumerate}
It was shown in \cite{AraizaRussellAbs23} that $p(a,b|x,y)$ is a quantum commuting correlation if and only if there exists a quantum commuting operator system $\S$ and a state $\varphi: \S \to \mathbb{C}$ such that $p(a,b|x,y) = \varphi(Q(a,b|x,y))$ for all $a,b \in [k]$ and $x,y \in [n]$.

The condition (4) for a quantum commuting operator system was abstractly characterized in terms of the order structure of the operator system $\S$ in \cite{AraizaRussellAbs23}. It was also shown in \cite{ARTApublishedt21} how to construct a universal quantum commuting operator system $\S_{n,k}$ for which every quantum commuting correlation is realized via $p(a,b|x,y) = \varphi(Q(a,b|x,y))$ for some state $\varphi$ on $\S_{n,k}$.

We wish to give an alternative construction for $\S_{n,k}$ by describing its matrix norms as factorization norms. In fact, such a construction is a corollary of Theorem \ref{thm: projection op sys}. This is because $\S_{n,k}$ is precisely an operator system spanned by projections satisfying the linear ``non-signalling conditions'' (i.e. the conditions (1), (2), and (3) above) on the generators $Q(a,b|x,y)$. Thus, by extending the trivial partial product by specifying the partial product $m$ to satisfy $m(Q(a,b|x,y),Q(a,b|x,y)) = Q(a,b|x,y)$, we can equip the universal vector space $\V$ (described above Theorem \ref{thm: projection op sys}) with the matrix norm $\{\|\cdot\|_n^m\}$ making it into a product system surjecting onto every quantum commuting operator system. Since each operator $Q(a,b|x,y)$ is a projection in this operator system, we have the following.

\begin{corollary} \label{c:correlation-system-factorization}
    Given $n,k \in \mathbb{N}$, the $*$-vector space $\S_{n,k}$ satisfying conditions (1), (2), and (3) above, equipped with the matrix norm $\{\|\cdot\|_n^m\}$, defines an operator system satisfying the following universal property: a correlation $\{p(a,b|x,y)\}$ is quantum commuting if and only if there exists a state $\varphi: \S_{n,k} \to \bC$ such that $p(a,b|x,y) = \varphi(Q(a,b|x,y))$ for all $a,b \in [k]$ and $x,y \in [n]$.
\end{corollary}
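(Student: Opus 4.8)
The plan is to recognize the operator system in the statement as a direct instance of Theorem \ref{thm: projection op sys}, to verify that it is a \emph{quantum commuting operator system} in the sense of \cite{AraizaRussellAbs23}, and then to read off both directions of the universal property from the known characterizations of quantum commuting correlations. First I would apply Theorem \ref{thm: projection op sys} with the generators $Q(a,b|x,y)$ playing the role of the projections $p_1, \dots, p_n$ and the non-signalling conditions (1), (2), (3) playing the role of the defining linear relations $\sum_i \alpha_{i,j} p_i = 0$. The partial product is the extension of the trivial product determined by $m(Q(a,b|x,y), Q(a,b|x,y)) = Q(a,b|x,y)$. Theorem \ref{thm: projection op sys} then yields that $(\S_{n,k}, \{\|\cdot\|_n^m\})$ is an operator system with a completely contractive partial product $m$, and that any Hilbert space realization of the generators by projections satisfying (1)--(3) gives a ucp product map.

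Next I would check that $(\S_{n,k}, \{\|\cdot\|_n^m\})$ satisfies conditions (1)--(4) defining a quantum commuting operator system. Conditions (1), (2), (3) hold by construction, since these are exactly the linear relations imposed on the generators. The essential point is condition (4): each $Q(a,b|x,y)$ must be a projection in $C^*_e(\S_{n,k})$. Letting $j : \S_{n,k} \to C^*_e(\S_{n,k})$ be the canonical embedding, Theorem \ref{thm: pp rep in C*-env} gives $j(Q(a,b|x,y))^2 = j(m(Q(a,b|x,y),Q(a,b|x,y))) = j(Q(a,b|x,y))$, and since each $Q(a,b|x,y)$ is self-adjoint, $j(Q(a,b|x,y))$ is a self-adjoint idempotent, hence a projection (this also shows $Q(a,b|x,y) \geq 0$ in $\S_{n,k}$). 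Thus $(\S_{n,k}, \{\|\cdot\|_n^m\})$ is a quantum commuting operator system.

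For the backward implication I would invoke the characterization of \cite{AraizaRussellAbs23}: because $\S_{n,k}$ is a quantum commuting operator system, every state $\varphi : \S_{n,k} \to \bC$ produces a quantum commuting correlation via $p(a,b|x,y) = \varphi(Q(a,b|x,y))$. For the forward implication, given a quantum commuting correlation $\{p(a,b|x,y)\}$, I would realize it through commuting projection-valued measures $\{E_{x,a}\}$, $\{F_{y,b}\}$ on a Hilbert space $\H$ together with a state $\psi$, set $P(a,b|x,y) := E_{x,a} F_{y,b}$ (which are projections satisfying (1)--(3)), and apply the moreover clause of Theorem \ref{thm: projection op sys} to obtain a ucp product map $\pi : \S_{n,k} \to \bB(\H)$ with $\pi(Q(a,b|x,y)) = P(a,b|x,y)$. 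Then $\varphi := \psi \circ \pi$ is a state on $\S_{n,k}$ satisfying $\varphi(Q(a,b|x,y)) = \psi(E_{x,a}F_{y,b}) = p(a,b|x,y)$, completing the equivalence.

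The step I expect to be the main obstacle is the intrinsic verification of condition (4), namely that the abstractly defined factorization-norm operator system forces each generator to be a genuine projection in its C*-envelope rather than merely an idempotent or a positive contraction. Everything hinges on Theorem \ref{thm: pp rep in C*-env}, which converts the prescribed partial-product relation $m(Q,Q)=Q$ into literal idempotency of $j(Q)$ inside $C^*_e(\S_{n,k})$; combining this with self-adjointness upgrades idempotency to projection-ness, which is exactly the structural condition that distinguishes quantum commuting operator systems and makes the identification with the universal object of \cite{ARTApublishedt21} possible.
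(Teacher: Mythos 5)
Your proposal is correct and follows essentially the same route as the paper: the paper derives the corollary from Theorem \ref{thm: projection op sys} together with the characterization of quantum commuting correlations via quantum commuting operator systems from \cite{AraizaRussellAbs23}, exactly as you do. Your explicit verification of condition (4) via Theorem \ref{thm: pp rep in C*-env} (idempotency of $j(Q(a,b|x,y))$ in $C^*_e(\S_{n,k})$ plus self-adjointness) is precisely the justification the paper leaves implicit in its remark that ``each operator $Q(a,b|x,y)$ is a projection in this operator system,'' so you have simply filled in a detail rather than taken a different path.
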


\subsection{Tracial states}

In this section, we wish to characterize when a state $\tau: \V \to \mathbb{C}$ on a unital operator space with a given partial product extends to a tracial state on the $C^*_m(\V)$. We will see that this occurs whenever $\tau$ is contractive with respect to a certain factorization norm defined on the free algebra $\mathcal{F}_m(\V)$ containing $\V$.

Let $\V$ be a unital operator space and let $m: \D \to \V$ be a completely contractive partial product. Then $\V$ is non-degenerate, since the universal mapping $\pi: \mathcal{F}_m(\V) \to C^*_m(\V)$ maps $\V + \J_m$ to the image of $\V$ in $C^*_m(\V)$. Identifying $\F_m(\V)$ with its image in $C^*_m(\V)$, we see that $\F_m(\V)$ is dense in the unital operator algebra $\A_m(\V)$ generated by $\V$ in $C^*_m(\V)$. If $\V$ is an operator system, then $\F_m(\V)$ is dense in $C^*_m(\V)$ since $\A_m(\V) = C^*_m(\V)$. Since the operator space structure of $\V + \V^*$ is uniquely determined by the operator space structure of $\V$, we can always extend $m$ to a completely contractive partial product of $\V + \V^*$, so we may assume $\V$ is an operator system without loss of generality. To define a factorization norm on $\V$, we will need to first consider a factorization norm on all of $\mathcal{F}_m(\V)$.

\begin{proposition} \label{prop: norm fm(V)}
    Let $\V$ be an operator system with completely contractive partial product $m$. For any $x \in M_n(\F_m(\V))$, define
    \[ \|x\|_n = \inf \{\|x_1\| \dots \|x_N\| : x = x_1 \odot_m \dots \odot_m x_N \} \]
where the infimum is taken over all permissible factorizations with each $x_i$ a matrix over $\V$. Then $\{\|\cdot\|_n\}$ is an $\mathrm{L}^{\infty}$-matrix seminorm on $\F_m(\V)$ and the universal map $\pi: \mathcal{F}_m(\V) \to C^*_m(\V)$ is completely isometric, i.e. $\|\pi^{(n)}(x)\| = \|x\|_n$ for all $x \in M_n(\F_m(\V))$.
\end{proposition}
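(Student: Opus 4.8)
The plan is to establish the two assertions separately, deriving the complete isometry of $\pi$ from the unital Blecher--Ruan--Sinclair representation theorem. The verification that $\{\|\cdot\|_n\}$ is an $\mathrm{L}^{\infty}$-matrix seminorm will run exactly as in Proposition \ref{prop: factorization seminorm}: the estimate $\|\alpha^* A\beta\|_k \le \|\alpha\|\,\|\beta\|\,\|A\|_n$ follows by prepending $\alpha^*\otimes e$ and appending $\beta\otimes e$ to any factorization, while $\|A\oplus B\|_{n+k} = \max(\|A\|_n,\|B\|_k)$ follows by padding two factorizations to a common length with scalar copies of the unit, rescaling the factors, and taking entrywise direct sums, together with a compression by the scalar coisometry $\begin{pmatrix} I & 0\end{pmatrix}$ for the reverse inequality. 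The only change from Proposition \ref{prop: factorization seminorm} is that factorizations are now required merely to multiply out in $\F_m(\V)$ to the prescribed element rather than to land back in $\V$; this does not affect any of these manipulations. I would also note that every $x\in M_n(\F_m(\V))$ admits at least one such factorization into matrices over $\V$ (write $x$ as a finite sum of words in $\V$ and realize that sum as a single row-times-column product), so the infimum is over a nonempty set.

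Next I would record the easy inequality $\|\pi^{(n)}(x)\| \le \|x\|_n$: applying the homomorphism $\pi$, whose restriction to $\V\subseteq C^*_m(\V)$ is a unital complete isometry, to a factorization $x = x_1\odot_m\cdots\odot_m x_N$ produces a matrix product in $M_n(C^*_m(\V))$, so submultiplicativity of the C*-norm gives $\|\pi^{(n)}(x)\| \le \prod_i\|x_i\|$, and taking the infimum yields the claim. Specializing to $M_n(\V)$ shows that $\|\cdot\|_n$ agrees with the original operator space norm on $\V$. Concatenation of factorizations shows that multiplication is completely contractive for $\{\|\cdot\|_n\}$, so after quotienting $\F_m(\V)$ by the null space of the seminorm (whose matrix levels are compatible, being $M_n(N)$ for $N$ the degree-one null space, as is standard for $\mathrm{L}^{\infty}$-matrix seminorms) I obtain a genuine $\mathrm{L}^{\infty}$-matrix norm turning the quotient into an abstract unital operator algebra.

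For the reverse inequality I would invoke the unital Blecher--Ruan--Sinclair representation theorem (see \cite{BlecherOperatorAlgebras}): this abstract unital operator algebra admits a unital homomorphism $\Phi$ into some $\bB(\H)$ with $\|\Phi^{(n)}(x)\| = \|x\|_n$ for all $x$. Its restriction $\rho := \Phi|_{\V}$ is then a unital, completely isometric (hence completely contractive) product map on $\V$ with its original norm, so $(C^*(\rho(\V)),\rho)$ is a product C*-cover of $\V$. By the universal property of $C^*_m(\V)$ (Theorem \ref{thm: c-star-m universal property}) there is a $*$-homomorphism $\sigma:C^*_m(\V)\to\bB(\H)$ with $\sigma\circ\pi = \Phi$, since both are homomorphisms agreeing on the generating set $\V$. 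As $\Phi$ is completely isometric and $\sigma$ is contractive, for every $x\in M_n(\F_m(\V))$ I would then conclude
\[ \|x\|_n = \|\Phi^{(n)}(x)\| = \|\sigma^{(n)}(\pi^{(n)}(x))\| \le \|\pi^{(n)}(x)\| \le \|x\|_n, \]
forcing equality throughout, which is exactly the assertion that $\pi$ is completely isometric.

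The routine parts are the seminorm axioms and the easy inequality. The substantive step is realizing the factorization seminorm by a single concrete representation rather than merely approximating it, and this is precisely where Blecher--Ruan--Sinclair is essential: it converts the abstract completely contractive multiplicative structure on $\F_m(\V)$ into an operator-algebra representation whose restriction to $\V$ is the unital completely contractive product map required to activate the universal property of $C^*_m(\V)$. The main obstacle I anticipate is bookkeeping around the passage to the quotient by the null space (so that $\pi$ and $\Phi$ descend compatibly and BRS applies to a genuine operator algebra), but this is standard once the $\mathrm{L}^{\infty}$-matrix seminorm structure is in place.
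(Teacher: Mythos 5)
Your proof is correct, and its overall architecture coincides with the paper's: the easy contractivity of $\pi$ via submultiplicativity, the seminorm axioms as in Proposition \ref{prop: factorization seminorm}, passage to the quotient of $\F_m(\V)$ by the null-space ideal $\K$, a completely isometric multiplicative realization of that quotient, and finally the universal property of $C^*_m(\V)$ (Theorem \ref{thm: c-star-m universal property}) to reverse the easy inequality. The one genuine difference lies in the realization step. You invoke the classical unital Blecher--Ruan--Sinclair theorem to represent the quotient operator algebra completely isometrically on $\bB(\H)$, whereas the paper stays inside its own framework: the quotient carries a completely contractive (now fully defined) product, so Theorem \ref{thm: factorization norm is unital} and Corollary \ref{cor: abstract product characterization} --- which rest on the Blecher--Effros--Zarikian multiplier theorem via Theorem \ref{thm: pp rep in C*-env} --- furnish the unital complete isometry respecting products, whence $C^*(\F_m(\V)/\K)$ is a product C*-cover and the universal $*$-homomorphism $\pi_m: C^*_m(\V) \to C^*(\F_m(\V)/\K)$ is completely contractive. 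For a full product the paper's Corollary \ref{cor: abstract product characterization} is precisely a partial-product generalization of unital BRS, so the two tools are interchangeable here; what your route buys is a citation to a standard external theorem together with explicit bookkeeping the paper's terse proof leaves implicit (nonemptiness of the factorization set, agreement of $\|\cdot\|_n$ with the original norm on $\V$, and the identity $\sigma \circ \pi = \Phi \circ q$ from agreement of unital homomorphisms on the generating set $\V$), while the paper's choice keeps the argument self-contained and uniform with its treatment of genuinely partial products, where BRS would not apply directly.
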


Note that we do not assert that each $\|\cdot\|_n$ is a norm since $\pi$ need not be injective even when its restriction to $\V$ is injective.

\begin{proof}
    Suppose that $x \in M_n(\F_m(\V))$ and $x = x_1 \odot_m \dots \odot_m x_N$ is a permissible factorization. Since $\pi$ is a homomorphism,
    \[ \pi^{(n)}(x) = \pi^{(n,n_2)}(x_1) \pi^{(n_2,n_3)}(x_2) \dots \pi^{(n_N,n)}(x_N). \] It follows that $\pi$ is completely contractive. On the other hand, repeating the arguments of Proposition \ref{prop: factorization seminorm}, we see that $\{\|\cdot\|_n\}$ is an $\mathrm{L}^{\infty}$-matrix seminorm. Let $\K = \{x \in \F_m(\V) : \|x\|_1 = 0\}$. Then $\F_m(\V)/\K$ is a operator space (and $\K$ is an ideal in $\F_m(\V)$). Repeating the arguments of Corollary \ref{cor: Quotient factorization norm}, we see that $m$ is completely contractive, so that $\F_m(\V)/\K$ generates a product C*-cover for $\V$. Since the universal mapping $\pi_m: C^*_m(\V) \to C^*(\F_m(\V)/\K)$ is completely contractive, we deduce that $\pi$ is completely isometric.
\end{proof}

Let $\V$ be an operator system. Let $\J_{\tau}$ denote the linear span all commutators in $\F_m(\V)$, i.e. $\J_{\tau}$ is the span of elements $xy - yx$ where $x$ and $y$ are words in elements of $\V$.

\begin{proposition} \label{prop: trace characterization}
    Let $\V$ be an operator system with completely contractive partial product $m$. For each $x \in \F_m(\V)$, let $\|x\|_{\tau} = \inf \{ \|x+y\| : y \in \J_{\tau}\}$. Then $\|\cdot\|_{\tau}$ is a seminorm on $\mathcal{F}_m(\V)$, and hence its restriction to $\V$ is a seminorm. Moreover, a unital linear map $\varphi: C^*_m(\V) \to \mathbb{C}$ is tracial if and only if its restriction to $\mathcal{F}_m(\V)$ is contractive with respect to $\|\cdot\|_{\tau}$.
\end{proposition}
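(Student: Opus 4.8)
The plan is to read $\|\cdot\|_{\tau}$ as the distance function to the linear subspace $\J_{\tau}$, measured in the seminorm $\|\cdot\|$ supplied by Proposition~\ref{prop: norm fm(V)}, and then to exploit its two complementary features: it never exceeds $\|\cdot\|$, and it vanishes on $\J_{\tau}$. For the seminorm assertion I would verify directly that the distance to a subspace is a seminorm. Absolute homogeneity holds because $\J_{\tau}$ is a subspace, so for $\lambda \neq 0$ the substitution $y \mapsto \lambda y$ permutes $\J_{\tau}$ and gives $\|\lambda x\|_{\tau} = |\lambda|\,\|x\|_{\tau}$, while $\lambda = 0$ is trivial; the triangle inequality follows by choosing near-optimal $y_1,y_2 \in \J_{\tau}$ for $x_1,x_2$ and testing $x_1+x_2$ against $y_1+y_2 \in \J_{\tau}$. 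Since $\|\cdot\|$ is a seminorm (being the pullback of the C*-norm under the completely isometric $\pi$ of Proposition~\ref{prop: norm fm(V)}), this shows $\{\|\cdot\|_{\tau}\}$ is a seminorm on $\F_m(\V)$, and its restriction to $\V \subseteq \F_m(\V)$ is then automatically a seminorm.

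For the equivalence I would use throughout that, after identifying $\F_m(\V)$ with its image in $C^*_m(\V)$, we have $\|x\|_{\tau} \le \|x\|$ (take $y=0$), that $\|c\|_{\tau} = 0$ for every $c \in \J_{\tau}$, and that $\F_m(\V)$ is dense in $C^*_m(\V)$; recall also that ``tracial'' here means a tracial state, and that a unital functional on a C*-algebra is a state exactly when it is contractive for the C*-norm. For the forward implication, suppose $\varphi$ is a tracial state. Traciality gives $\varphi(ab-ba)=0$, so $\varphi$ annihilates every commutator and hence all of $\J_{\tau}$. Then for $x \in \F_m(\V)$ and any $y \in \J_{\tau}$ we have $\varphi(x)=\varphi(x+y)$, whence $|\varphi(x)| = |\varphi(x+y)| \le \|\varphi\|\,\|x+y\| = \|x+y\|$, using that a state has norm one and that $\|x+y\|$ is the C*-norm of $\pi(x+y)$. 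Taking the infimum over $y \in \J_{\tau}$ yields $|\varphi(x)| \le \|x\|_{\tau}$, i.e. contractivity.

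For the converse, suppose $\varphi|_{\F_m(\V)}$ is contractive for $\|\cdot\|_{\tau}$. Applied to a commutator $c=ab-ba$ this gives $|\varphi(c)| \le \|c\|_{\tau}=0$, so $\varphi(ab)=\varphi(ba)$ for all words $a,b$; by bilinearity of the commutator and density this propagates to traciality of $\varphi$ on $C^*_m(\V)$. Moreover $|\varphi(x)| \le \|x\|_{\tau} \le \|x\|$ shows $\varphi$ is contractive for the C*-norm on the dense subalgebra $\F_m(\V)$, and combined with $\varphi(1)=1$ this forces $\|\varphi\|=1$, so, extending by continuity across the dense subalgebra, $\varphi$ is a state. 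Hence $\varphi$ is a tracial state, and the equivalence is complete.

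The step I expect to be most delicate is the interplay of positivity and traciality: contractivity for $\|\cdot\|_{\tau}$ must be seen to encode both at once, with the inequality $\|\cdot\|_{\tau} \le \|\cdot\|$ delivering positivity (stateness) and the vanishing of $\|\cdot\|_{\tau}$ on $\J_{\tau}$ delivering the trace property. One must take care that ``tracial'' be read as ``tracial state'' for the equivalence to hold, since a merely algebraically tracial unital functional need not be contractive, as the commutative case (where $\J_{\tau}=0$ and $\|\cdot\|_{\tau}=\|\cdot\|$) already illustrates. A secondary technical point is the passage between the dense subalgebra $\F_m(\V)$ and $C^*_m(\V)$, which I would handle using the continuity of states and of $\|\cdot\|_{\tau}$-contractive functionals together with density.
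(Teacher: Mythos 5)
Your proof is correct and takes essentially the same route as the paper: the paper's entire argument is the observation that it suffices to show $\J_{\tau}$ is dense in the closed linear span of the commutators of $C^*_m(\V)$ (obtained from density of $\F_m(\V)$ in $C^*_m(\V)$ together with bilinearity of the commutator), which is exactly the propagation step at the heart of your converse direction. The extra details you supply --- the distance-to-a-subspace seminorm verification, the forward direction via $\varphi(x)=\varphi(x+y)$ for $y\in\J_{\tau}$, the state property from unitality plus $\|\cdot\|_{\tau}\leq\|\cdot\|$, and the correct reading of ``tracial'' as ``tracial state'' --- are all consistent with what the paper leaves implicit.
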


\begin{proof}
    It suffices to show that $\J_{\tau}$ is dense in the closed linear span of all commutators in $C^*_m(\V)$, since a linear map $\varphi: C^*_m(\V) \to \mathbb{C}$ is tracial if and only if it is identically zero on the set of commutators in $C^*_m(\V)$. Since $\V$ is an operator system, $\F_m(\V)$ is a $*$-algebra. Since both $\F_m(\V)$ and $C^*_m(\V)$ are generated by $\V$, and since the map sending $\F_m(\V)$ into $C^*_m(\V)$ has dense range, we see that every element of $C^*_m(\V)$ can be approximated in norm by the image of an element of $\F_m(\V)$. In particular, every commutator $[x,y]$ in $C^*_m(\V)$ can be approximated in norm by a commutator $[x',y']$ with $x', y' \in \F_m(\V)$.
\end{proof}

Since $\|\cdot\|_{\tau}$ is defined entirely in terms of the matrix norms on $\V$ and the partial product $m$ by Proposition \ref{prop: norm fm(V)} and Proposition \ref{prop: trace characterization}, we have an intrinsic characterization of the set of states $\varphi: \V \to \bC$ which extend to tracial states on the product C*-cover $C^*_m(\V)$.

\begin{corollary} \label{c:tracial-extension}
    A unital linear map $\varphi: \V \to \bC$ extends to a tracial state $\widetilde{\varphi}: C^*_m(\V) \to \bC$ if and only if $\varphi$ is contactive with respect to the seminorm $\|\cdot\|_{\tau}$ on $\V$.
\end{corollary}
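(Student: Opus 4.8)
The plan is to combine Proposition \ref{prop: trace characterization} with a Hahn--Banach argument, using the fact that $\pi:\F_m(\V)\to C^*_m(\V)$ is completely isometric (Proposition \ref{prop: norm fm(V)}) to transport a functional defined only on $\V$ first up to $\F_m(\V)$ and then down to $C^*_m(\V)$. The forward direction is essentially immediate: if $\widetilde\varphi:C^*_m(\V)\to\bC$ is a tracial state extending $\varphi$, then $\widetilde\varphi$ is unital, so by Proposition \ref{prop: trace characterization} its restriction to $\F_m(\V)$ is contractive for $\|\cdot\|_{\tau}$; restricting further to $\V\subseteq\F_m(\V)$ gives $|\varphi(x)|=|\widetilde\varphi(x)|\le\|x\|_{\tau}$ for all $x\in\V$.

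For the converse, the first step is to extend $\varphi$ off of $\V$. Since $\|\cdot\|_{\tau}$ is a seminorm on $\F_m(\V)$ (Proposition \ref{prop: trace characterization}) whose restriction to $\V$ dominates $|\varphi|$, the complex Hahn--Banach theorem produces a linear functional $\widetilde\varphi:\F_m(\V)\to\bC$ extending $\varphi$ with $|\widetilde\varphi(x)|\le\|x\|_{\tau}$ for all $x\in\F_m(\V)$. Because $e\in\V$ and $\varphi(e)=1$, this extension is unital. Taking $y=0$ in the infimum defining $\|\cdot\|_{\tau}$ and invoking Proposition \ref{prop: norm fm(V)} gives $\|x\|_{\tau}\le\|x\|_1=\|\pi(x)\|$, so $\widetilde\varphi$ is in fact contractive for the C*-seminorm $\|\cdot\|_1$; in particular it vanishes on $\ker\pi=\{x:\|x\|_1=0\}$.

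The second step is to push $\widetilde\varphi$ down to $C^*_m(\V)$. Since $\widetilde\varphi$ kills $\ker\pi$ and obeys $|\widetilde\varphi(x)|\le\|\pi(x)\|$, the formula $\widehat\varphi(\pi(x)):=\widetilde\varphi(x)$ defines a well-defined unital contraction on the dense subalgebra $\pi(\F_m(\V))\subseteq C^*_m(\V)$, which extends by continuity to a unital contraction $\widehat\varphi:C^*_m(\V)\to\bC$. A unital contractive functional on a unital C*-algebra is automatically a state, so $\widehat\varphi$ is a state, and it extends $\varphi$ because $\pi|_{\V}$ is the cover embedding. Finally, the restriction of $\widehat\varphi$ to $\F_m(\V)$ is exactly $\widetilde\varphi$, which is $\|\cdot\|_{\tau}$-contractive by construction, so Proposition \ref{prop: trace characterization} applied to the unital map $\widehat\varphi$ guarantees that $\widehat\varphi$ is tracial. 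This yields the desired tracial extension.

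The main obstacle is the bookkeeping in the converse direction: one must extend $\varphi$ against the \emph{seminorm} $\|\cdot\|_{\tau}$ (not a norm) via Hahn--Banach, then transport the resulting functional across the possibly non-injective completely isometric map $\pi$, and recognize the continuous extension as a genuine state. The key technical point that makes all three steps compatible is the inequality $\|\cdot\|_{\tau}\le\|\cdot\|_1$, which simultaneously forces $\widetilde\varphi$ to factor through $\pi$ and preserves $\|\cdot\|_{\tau}$-contractivity so that the final appeal to Proposition \ref{prop: trace characterization} goes through.
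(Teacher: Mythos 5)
Your proposal is correct and takes essentially the same route as the paper: there the corollary is stated as an immediate consequence of Propositions \ref{prop: norm fm(V)} and \ref{prop: trace characterization}, and your argument --- Hahn--Banach extension of $\varphi$ against the seminorm $\|\cdot\|_{\tau}$ on $\F_m(\V)$, descent through $\pi$ via the inequality $\|\cdot\|_{\tau}\le\|\cdot\|_1=\|\pi(\cdot)\|$, and the final appeal to Proposition \ref{prop: trace characterization} --- supplies exactly the details the paper leaves implicit. In particular, your observation that $\|\cdot\|_{\tau}$-contractivity forces $\widetilde\varphi$ to vanish on $\ker\pi$ and to induce a unital contractive (hence state) functional on $C^*_m(\V)$ is the intended mechanism, so there are no gaps.
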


Note that the seminorm $\|\cdot\|_{\tau}$ may be trivial; for instance, it may satisfy $\|e\|_{\tau} = 0$ where $e$ is the unit of $\V$. In this case, $C^*_m(\V)$ lacks tracial states. For instance, let $\V_n$ be the Cuntz system considered in Example \ref{ex: Cuntz system}. Then $\V_n$ has no tracial states. This is because $nI = \sum s_i^* s_i$, whereas $I = \sum s_i s_i^*$. Hence any state $\varphi: \V_n \to \bC$ satisfying $\varphi(m(s_i,s_i^*)) = \varphi(m(s_i^*,s_i))$ would also satisfy $\varphi(I)=0$. Thus, $C^*_m(\V_n) = \O_n$ has no tracial states. So the obstruction to tracial states is evident at the level of the operator system $\V_n$.

\begin{remark}
    \emph{Corollary \ref{c:tracial-extension} shows that there is a one-to-one correspondance between states on $\V$ which extend to traces on $C^*_m(\V)$ and states that are contractive with respect to the norm $\|\cdot\|_{\tau}$. This provides an ``intrinsic'' meaning to tracial states on the operator system $\V$.} 
\end{remark}

\subsection{Corners of synchronous correlations}

We conclude with an application of the above characterization for tracial states to quantum correlation sets. A correlation $p(a,b|x,y)$ is called \textit{synchronous} if $p(a,b|x,x)=0$ whenever $a \neq b$. We let $C_{qc}^s(n,k)$ denote the set of synchronous quantum commuting correlations. This is easily seen to form a closed convex subset of $C_{qc}(n,k)$. Likewise, the synchronous quantum correlations $C_q^s(n,k)$ form a relatively closed convex subset of $C_q(n,k)$. It was shown in \cite{KimPaulsenSchafhauser18} that the equality $\overline{C_q^s(n,k)} = C_{qc}^s(n,k)$ for all $n$ and $k$ is equivalent to Tsirelson's conjecture (and hence to an affirmative answer to Connes' Embedding Problem). This fact is utilized in \cite{ji2020mip} in their refutation Tsirelson's conjecture. Whether or not $\overline{C_q^s(n,k)} = C_{qc}^s(n,k)$ is generally unknown for small values of $n$ and $k$, although we do have the equality $C_q^s(n,k) = C_{qc}^s(n,k)$ when $k=2$ and $n \leq 3$ \cite{RussellGeometry20}.

Tracial states play a crucial role in the study of synchronous quantum commuting correlations, due to the following chracterization of Paulsen, Severini, Stahlke, Todorov and Winter \cite{paulsen2016estimating}.

\begin{theorem}[Theorem 5.5 and Corollary 5.6 of \cite{paulsen2016estimating}] \label{thm: Paulsen-Severini et al}
    Let $\H$ be a Hilbert space with projection-valued measures $\{E_{x,a}\}_{a \ in [k]}, \{F_{y,b}\}_{b \in [k]} \subseteq \mathbb{B}(\H)$ satisfying $[E_{x,a},F_{y,b}] = 0$ for all $x, y \in [n]$ and $a, b \in [k]$, and a unit vector $h \in \H$, and define \[ p(a,b|x,y) = \langle E_{x,a} F_{y,b} h, h \rangle \] for all $x,y \in [n]$ and $a,b \in [k]$. Let $\mathcal{A} = C^*(\{E_{x,a}\})$ and let $\mathcal{B} = C^*(\{F_{y,b}\})$. If $p \in C^s_{qc}(n,k)$, then the state defined by $\varphi(T) = \langle T h,h \rangle$ is a tracial state when restricted to both $\mathcal{A}$ and $\mathcal{B}$, and hence is a tracial state on the product C*-algebra $\mathcal{A} \mathcal{B} \subseteq \mathbb{B}(\H)$. Moreover, 
    \[ p(a,b|x,y) = \varphi(E_{x,a}F_{y,b}) = \varphi(E_{x,a}E_{y,b}) = \varphi(F_{x,a} F_{y,b}) \]
    for all $x,y \in [n]$ and $a, b \in [k]$. Conversely, if $\mathcal{A}$ is a C*-algebra, $\{E_{x,a}\}_{a \in [k]} \subseteq \mathbb{B}(\H)$ is a projection-valued measure for each $x \in [n]$, and $\varphi: \mathcal{A} \to \mathbb{C}$ is a tracial state, then $p(a,b|x,y) := \varphi(E_{x,a} F_{y,b})$ is a synchronous quantum commuting correlation.
\end{theorem}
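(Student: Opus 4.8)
The plan is to extract from synchronicity a single pointwise identity relating the two families of projections, and then to leverage that identity at every stage. First I would prove the key lemma that $E_{x,a}h = F_{x,a}h$ for all $x$ and $a$. The synchronicity hypothesis $p(a,b|x,x)=0$ for $a\neq b$, together with $\sum_a E_{x,a}=I=\sum_b F_{x,b}$, forces the two marginals to agree: $\langle E_{x,a}h,h\rangle = \sum_b p(a,b|x,x) = p(a,a|x,x)$, likewise $\langle F_{x,a}h,h\rangle = \sum_{a'} p(a',a|x,x)=p(a,a|x,x)$, while $\langle E_{x,a}F_{x,a}h,h\rangle = p(a,a|x,x)$ as well. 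Expanding $\|E_{x,a}h-F_{x,a}h\|^2$ and using that $E_{x,a}$ and $F_{x,a}$ commute and are self-adjoint, every one of the four terms equals $p(a,a|x,x)$, so the expression collapses to $p(a,a|x,x)-2p(a,a|x,x)+p(a,a|x,x)=0$; hence $E_{x,a}h=F_{x,a}h$.

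With this identity in hand I would show that $\varphi(T)=\langle Th,h\rangle$ restricts to a trace on $\A=C^*(\{E_{x,a}\})$, the argument for $\B$ being symmetric. The crucial observation is that for any $S\in\A$ and any generator, $\varphi(SE_{x,a}) = \langle SE_{x,a}h,h\rangle = \langle SF_{x,a}h,h\rangle = \langle F_{x,a}Sh,h\rangle = \langle Sh,F_{x,a}h\rangle = \langle Sh,E_{x,a}h\rangle = \varphi(E_{x,a}S)$, where I used $E_{x,a}h=F_{x,a}h$, that $F_{x,a}$ commutes with $S\in\A$, and self-adjointness. Applying this with $S$ a word in the generators lets me rotate the last generator to the front, so $\varphi$ is invariant under cyclic permutations of words; since such words have dense linear span in $\A$ and $\varphi$ is continuous, $\varphi|_\A$ is tracial, and symmetrically $\varphi|_\B$ is tracial, giving the required tracial data on the commuting product. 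The moreover identities then fall out quickly: $\varphi(E_{x,a}F_{y,b}) = \langle E_{x,a}E_{y,b}h,h\rangle = \varphi(E_{x,a}E_{y,b})$ directly from $F_{y,b}h=E_{y,b}h$, and $\varphi(E_{x,a}F_{y,b}) = \langle F_{y,b}F_{x,a}h,h\rangle = \varphi(F_{y,b}F_{x,a}) = \varphi(F_{x,a}F_{y,b})$ using commutativity, the identity, and traciality on $\B$.

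For the converse I would pass through the GNS construction of the tracial state $\varphi$ on $\A$. Let $(\pi,\H_\varphi,\xi)$ be the GNS data, so $\xi$ is cyclic and $\varphi(a)=\langle\pi(a)\xi,\xi\rangle$. Because $\varphi$ is tracial, the map $J\widehat a = \widehat{a^*}$ extends to a conjugate-linear isometric involution on $\H_\varphi$ (isometry being precisely $\varphi(a^*a)=\varphi(aa^*)$), and $a\mapsto J\pi(a)J$ implements the right regular representation, landing in the commutant of $\pi(\A)$. Setting $F_{y,b} := J\pi(E_{y,b})J$ produces projections commuting with all $\pi(E_{x,a})$ and satisfying $\sum_b F_{y,b}=I$, i.e. commuting projection-valued measures. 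Moreover $J\xi=\xi$ gives $F_{y,b}\xi = \pi(E_{y,b})\xi$, so $\langle\pi(E_{x,a})F_{y,b}\xi,\xi\rangle = \varphi(E_{x,a}E_{y,b})$, which realizes the prescribed correlation as a quantum commuting correlation; synchronicity is immediate because $E_{x,a}E_{x,b}=0$ for $a\neq b$ yields $p(a,b|x,x)=0$.

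The main obstacle is the converse construction: checking that $J$ is well defined and isometric, that $J\pi(\A)J$ genuinely lies in the commutant of $\pi(\A)$, and that the $F_{y,b}$ are honest projection-valued measures requires the standard-form picture of the tracial GNS representation, which is exactly where the tracial hypothesis is used in full force. In the forward direction the only genuinely delicate point is the key lemma; once $E_{x,a}h=F_{x,a}h$ is available, traciality on each of $\A$ and $\B$ and the correlation identities are routine bookkeeping. I would therefore invest the most care in the GNS conjugation argument and in confirming that the manufactured second family of projections commutes with the first.
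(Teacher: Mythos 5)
The paper does not actually prove this statement --- it is quoted from \cite{paulsen2016estimating} with a citation in place of a proof --- so your attempt can only be measured against the standard argument there, and in all essentials you have reproduced it. Your key lemma $E_{x,a}h = F_{x,a}h$ via the four-term expansion of $\|E_{x,a}h - F_{x,a}h\|^2$ (each term equal to $p(a,a|x,x)$ by synchronicity and the PVM conditions), the rotation identity $\varphi(SE_{x,a}) = \varphi(E_{x,a}S)$ for $S \in \A$ (valid because $F_{x,a}$ commutes with every generator of $\A$, hence with $\A$), the density-plus-continuity upgrade to traciality on $\A$ and $\B$, the two displayed identities, and the converse via the tracial GNS construction with $F_{y,b} := J\pi(E_{y,b})J$ are exactly the steps of Paulsen--Severini--Stahlke--Todorov--Winter, and each one checks out. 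Two small corrections: $a \mapsto J\pi(a)J$ is right multiplication by $a^*$, an \emph{anti}-representation rather than ``the right regular representation,'' but this is harmless since you only conjugate the self-adjoint generators; and you should record explicitly that $F_{y,b}F_{y,b'} = J\pi(E_{y,b}E_{y,b'})J = 0$ for $b \neq b'$, so the $F$'s are genuinely projection-valued measures. You also correctly repaired the statement's typo in the converse ($\varphi(E_{x,a}F_{y,b})$ should read $\varphi(E_{x,a}E_{y,b})$, as there is no second family in the hypothesis).

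The one place you wave your hands --- ``giving the required tracial data on the commuting product'' --- is precisely the one clause of the statement that cannot be proved: as literally written, ``$\varphi$ is a tracial state on the product C*-algebra $\A\B$'' is false. Take $\H = \bC^2 \otimes \bC^2$, let $h$ be the maximally entangled unit vector, let $\{e_{x,1}, e_{x,2}\}$ for $x = 1,2$ be two PVMs in $M_2$ whose projections generate $M_2$, and set $E_{x,a} = e_{x,a} \otimes I$ and $F_{y,b} = I \otimes \overline{e_{y,b}}$. Then $[E_{x,a},F_{y,b}] = 0$, the identity $(X \otimes I)h = (I \otimes X^T)h$ gives $F_{x,a}h = E_{x,a}h$, and $p(a,b|x,y) = \langle E_{x,a}F_{y,b}h,h\rangle = \tfrac{1}{2}\mathrm{Tr}(e_{x,a}e_{y,b})$ is a synchronous quantum commuting correlation. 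The state $\varphi = \langle\, \cdot\, h, h\rangle$ restricts to the normalized trace on $\A = M_2 \otimes I$ and on $\B = I \otimes M_2$, but $\A\B = M_2 \otimes M_2 = M_4$ and $\varphi$ is a vector (pure) state there, hence not tracial: for $u = A \otimes \overline{C}$ and $v = B \otimes \overline{D}$ one computes $\varphi(uv) = \tfrac{1}{2}\mathrm{Tr}(ABD^*C^*)$ while $\varphi(vu) = \tfrac{1}{2}\mathrm{Tr}(BAC^*D^*)$, and with $A = e_{12}$, $B = e_{21}$, $C^* = e_{11}$, $D^* = I$ these equal $\tfrac{1}{2}$ and $0$ respectively. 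What is true --- and all that \cite{paulsen2016estimating} assert, and all that the present paper ever uses downstream (traces on a single algebra, or a trace on $\A \otimes_{max} \B$ taken as a hypothesis) --- is traciality of $\varphi$ on $\A$ and on $\B$ separately together with the displayed identities, which you did prove. So your instinct not to attempt the product clause was sound, but a complete answer should have flagged it as an overstatement in the quoted formulation rather than passing it off as established.
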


Suppose that we have C*-algebras $\mathcal{A}$ and $\mathcal{B}$, projection-valued measures $\{E_{x,a}\} \subseteq \mathcal{A}, \{F_{y,b}\} \subseteq \mathcal{B}$, and let $\varphi: \mathcal{A} \otimes_{max} \mathcal{B} \to \mathbb{C}$ be a tracial state. Applying the GNS construction to $\varphi$, we obtain the situation described in Theorem \ref{thm: Paulsen-Severini et al}; i.e. we obtain a Hilbert space $\H$, projection-valued measures $\{\pi(E_{x,a})\}, \{\pi(F_{y,b})\} \subseteq \mathbb{B}(\H)$ satisfying $[\pi(E_{x,a}),\pi(F_{y,b})] = 0$ for all $x, y, a$, and $b$, and a unit vector $h \in \H$ such that \[ p(a,b|x,y) = \varphi(E_{x,a} F_{y,b}) = \langle \pi(E_{x,a}) \pi(F_{y,b}) h, h \rangle \]
for all $x, y, a,$ and $b$. Although the corresponding correlation $p \in C_{qc}(n,k)$ need not be synchronous, it can be realized as the corner of a larger synchronous correlation in the following way. For each $x = 1,2, \dots, n$, define $R_{x,a} = E_{x,a}$, and for each $x = n+1, n+2, \dots, 2n$, define $R_{x,a} = F_{x,a}$. Then we obtain a correlation $q \in C^s_{qc}(2n,k)$ by setting $q(a,b|x,y) = \varphi(R_{x,a} R_{y,b})$, since $\varphi$ is tracial on $\mathcal{A} \otimes_{max} \mathcal{B}$ and by applying Theorem \ref{thm: Paulsen-Severini et al}. Now we can realize the original correlation $p$ as the ``corner'' of $q$ in the sense that $p(a,b|x,y) = q(a,b|x,y+n)$ for all $a,b \in [k]$ and $x,y \in [n]$.

Formally, we say that a quantum commuting correlation $\{p(a,b|x,y)\} \in C_{qc}^s(n,k)$ is a \textit{synchronous corner} if $p(a,b|x,y)=q(a,b,x,y+n)$ for some $\{q(a,b|x,y)\} \in C_{qc}^s(2n,k)$. We write $C_{qc}^{sc}(n,k)$ for the quantum commuting synchronous corners. We also define $C_q^{sc}(n,k) = C_{qc}^{sc}(n,k) \cap C_{q}(n,k)$, $C_{qa}^{sc} := C_{qc}^{sc}(n,k) \cap C_{qa}(n,k)$, and $C_{loc}^{sc}(n,k) = C_{qc}^{sc}(n,k) \cap C_{loc}(n,k)$. It is evident that $C_{r}^{sc}(n,k)$ is a relatively closed convex subset of $C_{r}(n,k)$ for each $r \in  \{loc, q, qa, qc\}$. The question of whether or not $C_r^{sc}(n,k) = C_r(n,k)$ was posed in \cite[Question 4.2]{maxEntangle} for each $r \in  \{loc, q, qa, qc\}$. It was proven that equality holds for $r=loc$ (and also for \textit{non-signalling} correlations) in \cite[Theorem 4.4]{maxEntangle}. It was also shown that every correlation in $C_q^{sc}(n,k)$ can be approximated by a correlation implemented with a maximally entangled state \cite[Theorem 4.3]{maxEntangle}. Finally, it was shown in \cite{maxEntangle}, as a consequence of \cite{KimPaulsenSchafhauser18} and \cite{ColadangeloStark20}, that the synchronous corners of the so-called \textit{quantum spatial} correlations do not agree with the set of all quantum spatial correlations.

By the above discussions, together with Theorem \ref{thm: Paulsen-Severini et al}, we have the following characterization of the set $C_{qc}^{sc}(n,k)$.

\begin{theorem}
    Let $p = \{p(a,b|x,y)\}$ be a correlation. Then the following statements are equivalent.
    \begin{enumerate}
        \item $p \in C_{qc}^{sc}(n,k)$.
        \item There exist C*-algebras $\mathcal{A}$ and $\mathcal{B}$, projection-valued operators $\{E_{x,a}\} \subseteq \mathcal{A}$ and $\{F_{y,b}\} \subseteq \mathcal{B}$ and a tracial state $\tau: \mathcal{A} \otimes_{max} \mathcal{B} \to \mathbb{C}$ such that $p(a,b|x,y) = \tau(E_{x,a} \otimes F_{y,b})$ for all $x,y \in [n]$ and $a,b \in [k]$.
        \item There exists a tracial state $\tau: \S_{n,k} \to \mathbb{C}$ such that $p(a,b|x,y) = \tau(Q(a,b|x,y))$, where $\S_{n,k}$ is the operator system considered in Corollary \ref{c:correlation-system-factorization}.
    \end{enumerate}
\end{theorem}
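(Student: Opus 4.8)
The plan is to route all three equivalences through statement (2), proving $(1)\Leftrightarrow(2)$ with the Paulsen--Severini--Stahlke--Todorov--Winter characterization (Theorem \ref{thm: Paulsen-Severini et al}) together with the synchronous-corner construction sketched above, and proving $(2)\Leftrightarrow(3)$ with the intrinsic trace-extension criterion (Corollary \ref{c:tracial-extension}) together with the universal property of $\S_{n,k}$ (Theorem \ref{thm: projection op sys} and Corollary \ref{c:correlation-system-factorization}). Thus (2) serves as the bridge between the correlation-theoretic condition (1) and the operator-system condition (3).

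For $(2)\Rightarrow(1)$ I would use the construction preceding the theorem verbatim: given PVMs $\{E_{x,a}\}\subseteq\mathcal{A}$, $\{F_{y,b}\}\subseteq\mathcal{B}$ and a tracial state $\tau$ on $\mathcal{A}\otimes_{max}\mathcal{B}$, set $R_{x,a}=E_{x,a}\otimes I$ for $x\in[n]$ and $R_{x,a}=I\otimes F_{x-n,a}$ for $x\in\{n+1,\dots,2n\}$. These form PVMs indexed by $[2n]$ inside the single C*-algebra $\mathcal{A}\otimes_{max}\mathcal{B}$ carrying the trace $\tau$, so the converse half of Theorem \ref{thm: Paulsen-Severini et al} yields $q(a,b|x,y):=\tau(R_{x,a}R_{y,b})\in C_{qc}^s(2n,k)$ with $p(a,b|x,y)=\tau(E_{x,a}\otimes F_{y,b})=q(a,b|x,y+n)$, i.e.\ $p\in C_{qc}^{sc}(n,k)$. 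For $(1)\Rightarrow(2)$ I would take $q\in C_{qc}^s(2n,k)$ with $p(a,b|x,y)=q(a,b|x,y+n)$, realize $q$ (via GNS) by commuting PVMs $\{\widetilde E_{x,a}\}_{x\in[2n]}$, $\{\widetilde F_{y,b}\}_{y\in[2n]}$ on a Hilbert space with unit vector $h$, and invoke the forward half of Theorem \ref{thm: Paulsen-Severini et al}: since $q$ is synchronous, the vector state $\varphi=\langle\,\cdot\,h,h\rangle$ is tracial on the product algebra $C^*(\{\widetilde E\})\,C^*(\{\widetilde F\})$. Relabelling $E_{x,a}=\widetilde E_{x,a}$ for $x\in[n]$ and $F_{y,b}=\widetilde F_{y+n,b}$ for $y\in[n]$ keeps the two families commuting, so pulling $\varphi$ back along the $*$-homomorphism $\mathcal{A}\otimes_{max}\mathcal{B}\to C^*(\{\widetilde E\})\,C^*(\{\widetilde F\})$ (with $\mathcal{A}=C^*(\{\widetilde E\})$, $\mathcal{B}=C^*(\{\widetilde F\})$) produces a tracial state $\tau$ with $\tau(E_{x,a}\otimes F_{y,b})=\varphi(\widetilde E_{x,a}\widetilde F_{y+n,b})=q(a,b|x,y+n)=p(a,b|x,y)$. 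The essential trick here is to read $p$ off the mixed $\varphi(\widetilde E\,\widetilde F)$ formula, which preserves commutativity, rather than the all-$E$ formula, which would force both indices into a single non-commutative algebra.

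For $(2)\Rightarrow(3)$ I would note that the projections $E_{x,a}\otimes F_{y,b}\in\mathcal{A}\otimes_{max}\mathcal{B}$ satisfy the non-signalling relations (1)--(3), so Theorem \ref{thm: projection op sys} supplies a unital completely positive product map $\Phi:\S_{n,k}\to\mathcal{A}\otimes_{max}\mathcal{B}$ with $\Phi(Q(a,b|x,y))=E_{x,a}\otimes F_{y,b}$. Being a product map, $\Phi$ extends through $C^*_u(\S_{n,k})$ and annihilates the ideal $\mathcal{I}_m$, hence factors as a $*$-homomorphism $\widehat\Phi:C^*_m(\S_{n,k})\to\mathcal{A}\otimes_{max}\mathcal{B}$ (the mechanism in the proof of Theorem \ref{thm: c-star-m universal property}); then $\tau\circ\widehat\Phi$ is a tracial state on $C^*_m(\S_{n,k})$ whose restriction $\tau'$ to $\S_{n,k}$ satisfies $\tau'(Q(a,b|x,y))=p(a,b|x,y)$, and $\tau'$ is a tracial state by Corollary \ref{c:tracial-extension}. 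Conversely, for $(3)\Rightarrow(2)$, Corollary \ref{c:tracial-extension} extends the tracial state to $\widetilde\tau$ on $C^*_m(\S_{n,k})$; I would then set $\mathcal{A}=C^*(\{\mathcal{E}(a|x)\})$ and $\mathcal{B}=C^*(\{\mathcal{F}(b|y)\})$ inside $C^*_m(\S_{n,k})$, where $\mathcal{E}(a|x)$ and $\mathcal{F}(b|y)$ are the images of the marginal sums, and pull $\widetilde\tau$ back along the product map $\mathcal{A}\otimes_{max}\mathcal{B}\to C^*_m(\S_{n,k})$.

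The crux, and the step I expect to demand the most care, is the structural lemma underpinning $(3)\Rightarrow(2)$: that inside $C^*_m(\S_{n,k})$ the marginals $\mathcal{E}(a|x),\mathcal{F}(b|y)$ are genuinely commuting PVMs with $\mathcal{E}(a|x)\mathcal{F}(b|y)=Q(a,b|x,y)$, even though the partial product only prescribes $m(Q(a,b|x,y),Q(a,b|x,y))=Q(a,b|x,y)$ and says nothing about cross products or orthogonality. The resolution I would use is that each $Q(a,b|x,y)$ is a projection in $C^*_m(\S_{n,k})$ and, for fixed $x,y$, these projections sum to the identity by condition (1); since projections summing to the identity are automatically mutually orthogonal, $\{Q(a,b|x,y)\}_{a,b}$ is an honest PVM. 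The marginals are then partial sums over disjoint index sets, hence PVMs, and expanding $\mathcal{E}(a|x)\mathcal{F}(b|y)=\sum_{a',b'}Q(a,b'|x,y)Q(a',b|x,y)$ with the same $(x,y)$ in both sums (legitimate by non-signalling) and applying orthogonality collapses the double sum to $Q(a,b|x,y)$; the same computation with the factors reversed gives $\mathcal{F}(b|y)\mathcal{E}(a|x)=Q(a,b|x,y)$, so $\mathcal{E}$ and $\mathcal{F}$ commute and $\mathcal{A},\mathcal{B}$ are commuting subalgebras. The pullback of $\widetilde\tau$ along $\mathcal{A}\otimes_{max}\mathcal{B}\to C^*_m(\S_{n,k})$ is then a bona fide tracial state with $\tau(\mathcal{E}(a|x)\otimes\mathcal{F}(b|y))=\widetilde\tau(Q(a,b|x,y))=p(a,b|x,y)$. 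Confirming that non-signalling permits the common $(x,y)$ in both marginal expansions, and that traciality survives each pullback along a $*$-homomorphism, are the routine but indispensable verifications.
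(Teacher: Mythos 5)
Your proposal is correct and follows the same overall route as the paper's proof: $(1)\Leftrightarrow(2)$ via Theorem \ref{thm: Paulsen-Severini et al} together with the corner construction in the discussion preceding the theorem, and $(2)\Leftrightarrow(3)$ via the ucp product map of Theorem \ref{thm: projection op sys}, the extension to a $*$-homomorphism on $C^*_m(\S_{n,k})$ from Theorem \ref{thm: c-star-m universal property}, and the trace-extension criterion of Corollary \ref{c:tracial-extension}. The one genuine divergence is in the step $(3)\Rightarrow(2)$: where the paper simply cites \cite{AraizaRussellAbs23} for the fact that each $Q(a,b|x,y)$ decomposes as a product $E(a|x)F(b|y)$ of mutually commuting projection-valued measures, you prove this structural lemma directly, and your argument is sound: the $Q(a,b|x,y)$ are self-adjoint idempotents in $C^*_m(\S_{n,k})$ (the embedding of an operator system is $*$-preserving and $m(Q,Q)=Q$); for fixed $(x,y)$ they sum to the identity, and projections $p,q$ with $p+q\le 1$ satisfy $pq=0$, so the family is mutually orthogonal; the marginals are then PVMs, and writing both marginals over the common pair $(x,y)$ — which non-signalling permits — collapses the double sum $\mathcal{E}(a|x)\mathcal{F}(b|y)=\sum_{a',b'}Q(a,b'|x,y)Q(a',b|x,y)$ to $Q(a,b|x,y)$ in either order, giving commutativity of $\mathcal{A}$ and $\mathcal{B}$. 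This buys a self-contained verification \emph{inside} $C^*_m(\S_{n,k})$ rather than an appeal to a characterization established in a different setting, which is arguably a slight strengthening of the paper's write-up. Two further small points in your favor: in $(1)\Rightarrow(2)$ you correctly read $p$ off the mixed formula $\varphi(\widetilde E\,\widetilde F)$ from Theorem \ref{thm: Paulsen-Severini et al}, which is exactly what keeps the relabelled families in commuting algebras (the all-$E$ formula would not), and your pullback $\widetilde\tau\circ\pi$ in $(3)\Rightarrow(2)$ corrects what is evidently a typo in the paper, which writes $\tau':=\pi\circ\tau$.
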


\begin{proof}
    The equivalence of (1) and (2) follows from Theorem \ref{thm: Paulsen-Severini et al} and the subsequent discussions above. We will show the equivalence of (2) and (3).

    Suppose that (2) holds, and let $\{Q(a,b|x,y)\}$ denote the generators of $\S_{n,k}$. By Theorem \ref{thm: projection op sys}, there exists a ucp product map $\pi: \S_{n,k} \to \mathcal{A} \otimes_{max} \mathcal{B}$ satisfying $\pi(Q(a,b|x,y)) = E_{x,a} \otimes F_{y,b}$. Consider the map $\tau' = \tau \circ \pi: \S_{n,k} \to \mathbb{C}$. Since $\pi$ is a product map, $\pi$ extends to a $*$-homomorphism on $C^*_m(\S_{n,k})$ by Theorem \ref{thm: c-star-m universal property}. Since $\tau$ is a tracial state, so is $\tau'$. So $\tau'$ is a tracial state on $\S_{n,k}$ by Corollary \ref{c:tracial-extension}.

    Finally, suppose that (3) holds. By Corollary \ref{c:tracial-extension} again, $\tau$ extends to a tracial state on $C^*_m(\S_{n,k})$. Then each $Q(a,b|x,y)$ is a projection in $C^*_m(\S_{n,k})$. As shown in \cite{AraizaRussellAbs23}, each $Q(a,b|x,y)$ decomposes as $Q(a,b|x,y)=E(a|x)F(y|b)$, where $\{E(a|x)\}_a, \{F(y|b)\}_b$ are mutually commuting projection-valued measures. Let $\mathcal{A} = C^*(\{E(a|x)\})$ and $\mathcal{B} = C^*(\{F(b|y)\})$. Then $\mathcal{A}$ and $\mathcal{B}$ are mutually commuting C*-subalgebras of $C^*_m(\S_{n,k})$. By the universal property of the maximal C*-algebra tensor product, there is a $*$-homomorphism $\pi: \mathcal{A} \otimes_{max} \mathcal{B} \to C^*_m(\S_{n,k})$ satisfying $\pi(E(a|x)) \otimes F(b|y)) = E(a|x)F(b|y)=Q(a,b|x,y)$. Hence $\tau' := \pi \circ \tau: \mathcal{A} \otimes_{max} \mathcal{B} \to \mathbb{C}$ is a tracial state as desired.
\end{proof}

\bibliographystyle{plain}
\bibliography{Refs}

\end{document}